\newcommand{\R}{{\mathbb R}}
\newcommand{\N}{{\mathbb N}}
\newcommand{\eps}{\epsilon}
\def\d{\mathrm{d}}
\def\e{\mathrm{e}} 
\def\N{\mathbb{N}}
\def\R{\mathbb{R}}
\newcommand{\la}{\langle}
\newcommand{\ra}{\rangle}
\def\dfrac#1#2{\lower0.15ex\hbox{\large$\frac{#1}{#2}$}}
\numberwithin{equation}{section}
\theoremstyle{plain} 
\newtheorem{theorem}{Theorem}[section] 
\newtheorem{lemma}[theorem]{Lemma} 
\newtheorem{proposition}[theorem]{Proposition} 
\newtheorem{remark}[theorem]{Remark} 
\newtheorem{definition}[theorem]{Definition}
\newtheorem{corollary}[theorem]{Corollary}
\numberwithin{equation}{section}
\begin{document} 

\title[LDP for Inhomogeneous Erd\H{o}s-R\'enyi random graphs]{Large deviation principle for the maximal eigenvalue of inhomogeneous Erd\H{o}s-R\'enyi random graphs}

\author[A. Chakrabarty]{Arijit Chakrabarty}
\author[R.~S.~Hazra]{Rajat Subhra Hazra}
\address{Indian Statistical Institute, 203 B.T. Road, Kolkata 700108, India}
\email{arijit.isi@gmail.com}
\email{rajatmaths@gmail.com}
\author[F.\ den Hollander]{Frank den Hollander}
\author[M.\ Sfragara]{Matteo Sfragara} 
\address{Mathematisch instituut, Universiteit Leiden, The Netherlands}
\email{denholla@math.leidenuniv.nl}
\email{m.sfragara@math.leidenuniv.nl}

\date{\today}

\begin{abstract}

We consider an inhomogeneous Erd\H{o}s-R\'enyi random graph $G_N$ with vertex set $[N] = \{1,\dots,N\}$ for which the pair of vertices $i,j \in [N]$, $i\neq j$, is connected by an edge with probability $r(\tfrac{i}{N},\tfrac{j}{N})$, independently of other pairs of vertices. Here, $r\colon\,[0,1]^2 \to (0,1)$ is a symmetric function that plays the role of a reference graphon. Let $\lambda_N$ be the maximal eigenvalue of the adjacency matrix of $G_N$. It is known that $\lambda_N/N$ satisfies a large deviation principle as $N \to \infty$. The associated rate function $\psi_r$ is given by a variational formula that involves the rate function $I_r$ of a large deviation principle on graphon space. We analyse this variational formula in order to identify the properties of $\psi_r$, specially when the reference graphon is of rank 1.
\end{abstract}

\keywords{Inhomogeneous Erd\H{o}s-R\'enyi random graph, adjacency matrix, largest eigenvalue, large deviation principle, rate function, graphon}
\subjclass[2000]{60B20,05C80, 46L54 }

\maketitle



\section{Introduction and main results}


\subsection{Motivation}

In the past 20 years, many properties have been derived about spectra of random matrices associated with \emph{random graphs}, like the adjacency matrix and the Laplacian matrix \cite{bauer2001random, bhamidi2012spectra, bordenave2010resolvent, dembo:eyal, ding2010spectral, dumitriu2012sparse, farkas2001spectra, jiang2012empirical, jiang2012low, khorunzhy2004eigenvalue, lee:schnelli, tran2013sparse, zhu:2018}. The focus of the present paper is on \emph{inhomogeneous} Erd\H{o}s-R\'enyi random graphs, which are rooted in the theory of complex networks. We consider the \emph{dense regime}, where the average degree of the vertices are proportional to the size of the graph, and analyse the rate function of the \emph{large deviation principle for the maximal eigenvalue of the adjacency matrix} derived in \cite{souvik2020}. In \cite{chakrabarty2019}  the \emph{non-dense non-sparse regime} was considered, where the degrees diverge but sublinearly in the size of the graph, and identified the scaling limit of the empirical spectral distribution of both the adjacency matrix and the Laplacian matrix. Recent results on the maximal eigenvalue in the \emph{sparse regime}, where the degrees are stochastically bounded, can be found in \cite{benaych2017largest}. 

Large deviations of Erd\H{o}s-R\'enyi random graphs were studied in \cite{chatterjeeln2017, chatterjee2011, lubetzkyzhao2015} with the help of the theory of graphons, in particular, subgraph densities and maximal eigenvalues. We refer to \cite{chatterjeeln2017} for a comprehensive review of the literature. Large deviation theory for random matrices started in \cite{arous:guionnet}, with the study of large deviations of the empirical spectral distribution of $\beta$-ensembles with a quadratic potential. The rate was shown to be the square of the number of vertices, and the rate function was shown to be given by a non-commutative notion of entropy. The maximal eigenvalue for such ensembles was studied in \cite{arous:guionnet:dembo}. Large deviations of the empirical spectral distribution of random matrices with non-Gaussian tails were derived in \cite{bordenavecaputo2014}. More recently, the maximal eigenvalue in that setting was studied in \cite{augeri2016, augeriguionnethusson2019}. The adjacency matrix of an inhomogeneous Erd\H{o}s-R\'enyi random graph does not fall in this regime, and hence different techniques are needed in the present paper.


\subsection{LDP for inhomogeneous Erd\H{o}s-R\'enyi random graphs}

Let
\begin{equation}
\mathcal{W} = \bigl\{h\colon\, [0,1]^2 \to [0,1]\colon\,h(x,y) = h(y,x)\,\, \forall \, x,y \in [0,1] \bigr\}
\end{equation}
denote the set of \emph{graphons}. Let $\mathcal{M}$ be the set of Lebesgue measure-preserving bijective maps $\phi\colon\, [0,1] \mapsto [0,1]$. For two graphons $h_1, h_2 \in \mathcal{W}$, the \emph{cut-distance} is defined by 
\begin{equation}
d_{\square}(h_1, h_2) = \sup_{S,T \subset [0,1]} \bigg| \int_{S \times T}\d x \, \d y \, \big[h_1(x,y) - h_2(x,y) \big]  \bigg|,
\end{equation}
and the \emph{cut-metric} by
\begin{equation}
\delta_{\square}(h_1, h_2) = \inf_{\phi \in \mathcal{M}} d_{\square}(h_1, h_2^{\phi}),
\end{equation}
where $h_2^{\phi}(x,y) = h_2(\phi(x), \phi(y))$. The cut-metric defines an equivalence relation $\sim$ on $\mathcal{W}$ by declaring $h_1 \sim h_2$ if and only if $\delta_{\square}(h_1, h_2)= 0$, and leads to the quotient space $\widetilde{\mathcal{W}} = \mathcal{W}/_\sim$. For $h \in \mathcal{W}$, we write $\widetilde{h}$ to denote the equivalence class of $h$ in $\widetilde{\mathcal{W}}$. The equivalence classes correspond to relabelings of the vertices of the graph. The pair $(\widetilde{\mathcal{W}}, \delta_{\square})$ is a \emph{compact metric space} \cite{lovasz2012}. 

Let $r \in \mathcal{W}$ be a \emph{reference graphon} satisfying
\begin{equation}
\label{Assbasic}
\exists\,\eta>0\colon\qquad \eta \leq r(x,y) \leq 1-\eta \quad \forall\,x,y \in [0,1]^2.
\end{equation}
Fix $N \in \mathbb{N}$ and consider the random graph $G_N$ with vertex set $[N]=\{1, \dots, N \}$ where the pair of vertices $i,j \in [N]$, $i \neq j$, is connected by an edge with probability $r(\tfrac{i}{N}, \tfrac{j}{N})$, independently of other pairs of vertices. Write $\mathbb{P}_N$ to denote the law of $G_N$. Use the same symbol for the law on $\mathcal{W}$ induced by the map that associates with the graph $G_N$ its graphon $h^{G_N}$, defined by
\begin{equation}
h^{G_N}(x,y)
= \left\{\begin{array}{ll}
1, &\text{if there is an edge between vertex $\lceil Nx\rceil$ and vertex $\lceil Ny\rceil$},\\
0, &\text{otherwise}. 
\end{array}
\right. 
\end{equation}
Write $\widetilde{\mathbb{P}}_N$ to denote the law of $\widetilde{h}^{G_N}$.

The following LDP is proved in \cite{souvik2020} and is an extension of the celebrated LDP for homogeneous ERRG derived in \cite{chatterjee2011} and further properties of the rate functions were derived in~\cite{lubetzkyzhao2015}.

\begin{theorem}{\bf [LDP for inhomogeneous ERRG]}
\label{thm:LDPinhom}
Subject to \eqref{Assbasic}, the sequence $(\widetilde{\mathbb{P}}_N)_{N\in\mathbb{N}}$ satisfies the large deviation principle on $(\widetilde{\mathcal{W}},\delta_{\square})$ with rate $\binom{N}{2}$, i.e., 
\begin{equation}
\begin{aligned}
\limsup_{N \to \infty} \frac{1}{\binom{N}{2}} \log \widetilde{\mathbb{P}}_N (\mathcal{C}) 
&\leq - \inf_{\widetilde h \in \mathcal{C}} J_r(\widetilde h)
&\forall\,\mathcal{C} \subset \widetilde{\mathcal{W}} \text{ closed},\\ 
\liminf_{N \to \infty} \frac{1}{\binom{N}{2}} \log \widetilde{\mathbb{P}}_N(\mathcal{O}) 
&\geq - \inf_{\widetilde h \in \mathcal{O}} J_r(\widetilde h)
&\forall\,\mathcal{O} \subset \widetilde{\mathcal{W}} \text{ open},
\end{aligned}
\end{equation}
where the rate function $J_r\colon\,\widetilde{\mathcal{W}} \to \mathbb{R}$ is given by
\begin{equation}
\label{Jrhdef}
J_r(\widetilde h) = \inf_{\phi \in \mathcal{M}} I_r(h^\phi),
\end{equation}
where $h$ is any representative of $\widetilde h$ and 
\begin{equation}
\label{Irhdef}
I_r(h) = \int_{[0,1]^2} \d x\, \d y \,\, \mathcal{R}\big(h(x,y) \mid r(x,y)\big), \quad h \in \mathcal{W},
\end{equation}
with
\begin{equation}
\label{Rdef}
\mathcal{R}\big(a \mid b\big) = a \log \tfrac{a}{b} + (1-a) \log \tfrac{1-a}{1-b}
\end{equation}
the relative entropy of two Bernoulli distributions with success probabilities $a \in [0,1]$, $b \in (0,1)$ (with the convention $0 \log 0 =0$).
\end{theorem}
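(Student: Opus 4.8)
\medskip
\noindent\emph{Outline of the argument.}
The plan is to follow the route of Chatterjee and Varadhan~\cite{chatterjee2011} for the homogeneous case, carrying the reference graphon $r$ through every step. Since $(\widetilde{\mathcal{W}},\delta_\square)$ is a compact metric space, the full LDP with a good, lower semicontinuous rate function will follow once one establishes, for every $\widetilde h_0\in\widetilde{\mathcal{W}}$, the \emph{local} estimates
\begin{equation}
\label{eq:loc}
-J_r(\widetilde h_0)
=\lim_{\delta\downarrow 0}\limsup_{N\to\infty}\frac{1}{\binom{N}{2}}\log\widetilde{\mathbb{P}}_N\!\left(B_\delta(\widetilde h_0)\right)
=\lim_{\delta\downarrow 0}\liminf_{N\to\infty}\frac{1}{\binom{N}{2}}\log\widetilde{\mathbb{P}}_N\!\left(B_\delta(\widetilde h_0)\right),
\end{equation}
where $B_\delta(\widetilde h_0)$ denotes the open $\delta_\square$-ball of radius $\delta$; exponential tightness is automatic from compactness, and a standard covering argument then upgrades \eqref{eq:loc} to the closed-set upper bound and the open-set lower bound.

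The engine for the upper bound is a finite-dimensional (block) reduction. Fix $m\in\N$, let $P_1,\dots,P_m$ be the partition of $[0,1]$ into intervals of length $1/m$, and for a graph $G$ on $[N]$ ($N$ a multiple of $m$) let $\beta^{(m)}(G)=(\beta_{kl})_{1\le k\le l\le m}$ record the empirical edge densities between the vertex blocks indexed by $P_k$ and $P_l$; for a graphon, $\beta^{(m)}$ denotes the vector of its block averages. Since the edges of $G_N$ are independent Bernoulli variables with parameters in $[\eta,1-\eta]$, a Cram\'er-type estimate for sums of independent, non-identical Bernoulli variables — using that Riemann sums of $t\mapsto\log(1-r(x,y)+r(x,y)\e^t)$ over each block converge to integrals, and that the ellipticity $\eta\le r\le 1-\eta$ makes these log-moment generating functions finite and equi-Lipschitz — shows that $(\beta^{(m)}(G_N))_N$ satisfies an LDP on $[0,1]^{\binom{m}{2}+m}$ at speed $\binom{N}{2}$ with an explicit convex rate function $K^{(m)}$. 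Two facts drive the argument: (i) for every graphon $h$ one has $K^{(m)}(\beta^{(m)}(h))\uparrow I_r(h)$ as $m\to\infty$, because on a small block $r$ is nearly constant so the per-block rate converges to $\int\mathcal{R}(h\mid r)$, and Lebesgue differentiation together with boundedness of $\mathcal{R}(\cdot\mid r(\cdot,\cdot))$ (again from ellipticity) sums these up to $I_r(h)$; and (ii) every representative $h$ of $\widetilde h_0$ is $\delta_\square$-approximable by an $m$-block step graphon $h^{(m)}$ with $I_r(h^{(m)})\to I_r(h)$ (weak regularity lemma~\cite{lovasz2012} for the cut-approximation, a martingale/dominated-convergence argument for the energies). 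Lower semicontinuity of $h\mapsto I_r(h)$ under cut-convergence, and hence of $J_r$, falls out of the same scheme.

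Granting (i)--(ii), the \textbf{upper bound} in \eqref{eq:loc} proceeds as follows: if $\delta_\square(\widetilde h^{G_N},\widetilde h_0)<\delta$, then testing the cut-distance against the rectangles $P_k\times P_l$ shows that $\beta^{(m)}(h^{G_N})$ lies within $O(m^2\delta)$ of $\beta^{(m)}(h_0^\phi)$ for some $\phi\in\mathcal{M}$, so that, for each fixed $m$,
\begin{equation}
\limsup_{N\to\infty}\frac{1}{\binom{N}{2}}\log\widetilde{\mathbb{P}}_N\!\left(B_\delta(\widetilde h_0)\right)
\le-\inf_{\phi\in\mathcal{M}}\ \inf_{|\beta-\beta^{(m)}(h_0^\phi)|\le O(m^2\delta)}K^{(m)}(\beta),
\end{equation}
by the finite-dimensional upper bound applied to the (closed) set on the right; letting $\delta\downarrow 0$ and then $m\to\infty$ and invoking (i)--(ii), the right-hand side tends to $-\inf_{\phi\in\mathcal{M}}I_r(h_0^\phi)=-J_r(\widetilde h_0)$ — the relabelling infimum defining $J_r$ is precisely what the freedom in the choice of representative produces. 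The \textbf{lower bound} is dual: given $\varepsilon>0$, pick an $m$-block step graphon $g$ with $\delta_\square(g,h_0)<\varepsilon$ and $I_r(g)\le J_r(\widetilde h_0)+\varepsilon$ (possible from the definition of $J_r$ together with (ii)), sample $G_N$ with edge probabilities $g(\tfrac{i}{N},\tfrac{j}{N})$, and note that it lies in $B_{2\varepsilon}(\widetilde h_0)$ with probability tending to $1$ (quasirandomness of graphon-sampled graphs, via McDiarmid's inequality). Comparing $\widetilde{\mathbb{P}}_N$ with this law through the Radon--Nikodym density — whose logarithm, on the typical event, equals $-\binom{N}{2}\,I_r(g)\,(1+o(1))$ because the fluctuations of $G_N$ decorrelate from the bounded log-likelihood ratio — gives $\widetilde{\mathbb{P}}_N(B_{2\varepsilon}(\widetilde h_0))\ge\e^{-\binom{N}{2}(I_r(g)+o(1))}$; sending $N\to\infty$ and then $\varepsilon\downarrow 0$ closes \eqref{eq:loc}.

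The main obstacle is the bridge between the block level and the graphon level. Concretely, the passage $K^{(m)}\to I_r$ must be made uniform enough that the iterated limit in the upper bound ($\delta\downarrow 0$, then $m\to\infty$) lands exactly on $J_r(\widetilde h_0)$; this hides a minimax-type interchange of the refinement limit $m\to\infty$ with the relabelling infimum over $\mathcal{M}$, which has to be justified by working with all scales $m$ at once rather than one at a time, and it also requires controlling the within-block structure of $G_N$ in the lower bound. This is exactly where the cut-metric is indispensable — it is what makes the weak regularity lemma available and is the topology in which $I_r$ and $J_r$ are lower semicontinuous — and where $\eta\le r\le 1-\eta$ is used repeatedly, to keep all relative entropies bounded, all exponential moments uniformly controlled, and all Radon--Nikodym derivatives well behaved. (An alternative is to tilt the homogeneous LDP of~\cite{chatterjee2011} by the density of $\widetilde{\mathbb{P}}_N$ with respect to the law of $G(N,\tfrac12)$, but this runs into the same difficulty, since the relevant energy $\widetilde h\mapsto\inf_{\phi\in\mathcal{M}}\int h^\phi\log\tfrac{r}{1-r}$ is not $\delta_\square$-continuous and the Laplace-principle argument must again be carried out by hand rather than through Varadhan's lemma.)
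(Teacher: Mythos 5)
The paper does not supply its own proof of Theorem~\ref{thm:LDPinhom}: it is taken wholesale from Dhara--Sen~\cite{souvik2020}, with the identification of the rate function as $J_r$ (rather than the lower-semicontinuous envelope of $I_r$) credited to Markering~\cite{Markering2020}. Your outline reconstructs, in a recognisable way, precisely the route those works take, which in turn follows Chatterjee--Varadhan~\cite{chatterjee2011}: local LDP estimates on $\delta_\square$-balls, a finite-dimensional Cram\'er reduction at the block level, passage from blocks to graphons through the weak regularity lemma, and a change-of-measure argument for the lower bound. So the approach matches the cited literature and the architecture is sound.

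Two places, however, are stated too loosely to count as a proof. First, the per-block rate in the inhomogeneous Cram\'er LDP is the Legendre transform of $t\mapsto |B_{kl}|^{-1}\int_{B_{kl}}\log\!\left(1-r+r\e^t\right)$, which is \emph{not} $\mathcal{R}\!\left(\beta_{kl}\mid \bar r_{kl}\right)$ when $r$ is non-constant on the block; in fact by Jensen it dominates that quantity. Your assertion ``$K^{(m)}(\beta^{(m)}(h))\uparrow I_r(h)$'' therefore needs an argument — one must show the genuine inhomogeneous block rate, not just $I_{r^{(m)}}(\cdot)$, converges to $I_r(h)$, and monotonicity along nested (dyadic) partitions is not automatic. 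This is exactly where the ellipticity $\eta\le r\le 1-\eta$ is used in \cite{souvik2020}, and it is also where the block-graphon approximation lemma (quoted in this paper as Lemma~\ref{lem:convergencecost}) does the work. Second, you claim lower semicontinuity of $J_r$ on $\widetilde{\mathcal{W}}$ ``falls out of the same scheme,'' but this is a genuinely delicate point: $I_r$ is $L^2$-continuous but not $\delta_\square$-lower-semicontinuous in a straightforward way, $J_r$ involves an infimum over $\mathcal{M}$ which can destroy semicontinuity, and it is precisely the content of \cite{Markering2020} (under the integrability conditions on $\log r$, $\log(1-r)$, implied by \eqref{Assbasic}) that $J_r$ coincides with the l.s.c.\ envelope appearing in \cite{souvik2020}. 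Without that identification, \eqref{Jrhdef} as stated is not known to be the rate function. Both points should be flagged as borrowed from the references rather than presented as routine.
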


\noindent
It is clear that $J_r$ is a good rate function, i.e., $J_r \not\equiv \infty$ and $J_r$ has compact level sets. Note that \eqref{Jrhdef} differs from the expression in \cite{souvik2020}, where the rate function is the lower semi-continuous envelope of $I_r(h)$. However, it was shown in \cite{Markering2020} that, under the integrability conditions $\log r, \log(1-r) \in L^1([0,1]^2)$, the two rate functions are equivalent, since $J_r(\widetilde h)$ is lower semi-continuous on $\widetilde{\mathcal{W}}$. Clearly, these integrability conditions are implied by \eqref{Assbasic}.


\subsection{Graphon operators}

With $h \in \mathcal{W}$ we associate a graphon operator acting on $L^2([0,1])$, defined as the linear integral operator
\begin{equation} 
\label{graphonoperator}
(T_h u)(x) = \int_{[0,1]} \d y \, h(x,y) u(y), \qquad x \in [0,1],
\end{equation}
with $u \in L^2([0,1])$. The operator norm of $T_h$ is defined as
\begin{equation}
\| T_h \| = \sup_{ {u \in L^2([0,1])} \atop {\| u \|_2 = 1} } \| T_hu \|_2,
\end{equation}
where $\| \cdot \|_2$ denotes the $L^2$-norm. Given a graphon $h \in \mathcal{W}$, we have $\|T_h \| \leq \| h \|_2$. Hence, a graphon sequence converging in the $L^2$-norm also converges in the operator norm. 

The product of two graphons $h_1,h_2 \in \mathcal{W}$ is defined as
\begin{equation}
(h_1 h_2)(x,y) = \int_{[0,1]}\d z \, h_1(x,z)h_2(z,y), \qquad (x,y) \in [0,1]^2,
\end{equation}
and the $n$-th power of a graphon $h \in \mathcal{W}$ as
\begin{equation}
h^n(x,y) = \int_{[0,1]^{n-1}} \d z_1 \cdots  \d z_{n-1} \, h(x,z_1) \times \cdots \times h(z_{n-1}, y), 
\qquad (x,y) \in [0,1]^2,\,n \in \mathbb{N}.
\end{equation}

\begin{definition}{\bf [Eigenvalues and eigenfunctions]}
$\mu \in \mathbb{R}$ is said to be an eigenvalue of the graphon operator $T_h$ if there exists a non-zero function $u \in L^2([0,1])$ such that
\begin{equation}
(T_hu)(x) = \mu u(x), \qquad x \in [0,1].
\end{equation}
The function $u$ is said to be an eigenfunction associated with $\mu$.
\end{definition}

\begin{proposition}{\bf [Properties of the graphon operator]}
\label{prop:basics}
For any $h \in \mathcal{W}$:\\
(i) The graphon operator $T_h$ is self-adjoint, bounded and continuous.\\
(ii) The graphon operator $T_h$ is diagonalisable and has countably many eigenvalues, all of which are real and can be ordered as $\mu_1 \geq \mu_2 \geq \dots \geq 0$. Moreover, there exists a collection of eigenfunctions which form an orthonormal basis of $L^2([0,1])$.\\
(iii) The maximal eigenvalue $\mu_1$ of the graphon operator $h$ is strictly positive and has an associated eigenfunction $u_1$ satisfying $u_1(x) > 0$ for all $x \in [0,1]$. Moreover, $\mu_1 = \| T_h \|$, i.e., the maximal eigenvalue equals the operator norm.
\end{proposition}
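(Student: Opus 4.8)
The plan is to realise $T_h$ as a Hilbert--Schmidt integral operator on $L^2([0,1])$, so that parts (i) and (ii) become instances of the spectral theorem for compact self-adjoint operators, and then to obtain part (iii) by a Perron--Frobenius / Krein--Rutman type argument exploiting the pointwise non-negativity of the kernel $h$ (and, for the strict positivity statements, its irreducibility, which here is guaranteed by \eqref{Assbasic}).

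\emph{Parts (i) and (ii).} Linearity of $T_h$ is clear, and the bound $\|T_h u\|_2 \le \|h\|_2\,\|u\|_2$ already recorded in the excerpt shows that $T_h$ is bounded, hence continuous. Self-adjointness follows from Fubini's theorem together with the symmetry $h(x,y)=h(y,x)$, which give $\langle T_h u, v\rangle = \langle u, T_h v\rangle$ for all $u,v \in L^2([0,1])$. Since $h$ is bounded it belongs to $L^2([0,1]^2)$, so $T_h$ is Hilbert--Schmidt and therefore compact. The spectral theorem for compact self-adjoint operators then yields an orthonormal basis of $L^2([0,1])$ consisting of eigenfunctions of $T_h$, whose eigenvalues form an at most countable set, all real (by self-adjointness) with $0$ as the only possible accumulation point; enumerating them with multiplicity in decreasing order gives $\mu_1 \ge \mu_2 \ge \cdots$ with $\mu_n \to 0$, which is the asserted diagonalisation.

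\emph{Part (iii).} For a compact self-adjoint operator one has $\|T_h\| = \sup_{\|u\|_2=1}|\langle T_h u, u\rangle|$. Because $h \ge 0$ pointwise, replacing $u$ by $|u|$ cannot decrease the quadratic form:
\begin{equation*}
\langle T_h|u|, |u|\rangle
= \int_{[0,1]^2}\d x\,\d y\, h(x,y)\,|u(x)|\,|u(y)|
\ \ge\ \Big|\int_{[0,1]^2}\d x\,\d y\, h(x,y)\,u(x)\,u(y)\Big|
= |\langle T_h u, u\rangle|,
\end{equation*}
so $\sup_{\|u\|_2=1}\langle T_h u, u\rangle \ge \|T_h\|$. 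Testing with the constant function $\mathbf{1}$ (of $L^2$-norm one) gives $\langle T_h\mathbf{1},\mathbf{1}\rangle = \int_{[0,1]^2}\d x\,\d y\, h(x,y) \ge \eta > 0$ under \eqref{Assbasic}, so this supremum, which we denote $\mu_1$, is strictly positive. For a compact self-adjoint operator a strictly positive value of $\sup_{\|u\|_2=1}\langle T_h u, u\rangle$ is the largest eigenvalue and is attained at a corresponding unit eigenfunction; hence $\mu_1$ is an eigenvalue with $\mu_1 \ge \|T_h\|$, and since every eigenvalue has modulus at most $\|T_h\|$ we conclude $\mu_1 = \|T_h\| > 0$. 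If $u_1$ is such a maximiser, then so is $|u_1|$ by the displayed inequality, so we may take $u_1 \ge 0$, $u_1 \not\equiv 0$. Finally, choosing for $u_1$ the representative $x \mapsto \mu_1^{-1}(T_h u_1)(x)$ and using \eqref{Assbasic},
\begin{equation*}
u_1(x) = \mu_1^{-1}\int_{[0,1]}\d y\, h(x,y)\,u_1(y) \ \ge\ \mu_1^{-1}\,\eta \int_{[0,1]}\d y\, u_1(y) \ >\ 0 \qquad \forall\, x \in [0,1],
\end{equation*}
where the final inequality holds because $u_1 \ge 0$ with $\|u_1\|_2 = 1$; hence $u_1$ is everywhere strictly positive.

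\emph{Main obstacle.} I expect no genuine difficulty, as the statement merely packages standard functional-analytic facts; the only delicate point is the last part of (iii). For a general $h \in \mathcal{W}$ the argument only delivers $\mu_1 \ge 0$ and a non-negative leading eigenfunction, and the strict positivity of $\mu_1$ and of $u_1$ requires $h$ to be non-trivial and irreducible (not block-diagonal after a relabelling of $[0,1]$). The uniform lower bound \eqref{Assbasic} makes both automatic, which is precisely the regime relevant to the rest of the paper.
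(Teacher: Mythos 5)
Your proposal is correct and in fact more informative than the paper's own treatment: the paper simply cites the spectral theory of compact Hermitian operators (Sauvigny, Theorem 7.3) without proof, whereas you supply a self-contained argument via the Hilbert--Schmidt property, the spectral theorem for compact self-adjoint operators, and a Perron--Frobenius style positivity argument. The individual steps are sound: the inequality $\langle T_h|u|,|u|\rangle \ge |\langle T_h u,u\rangle|$ correctly exploits $h\ge 0$ to show $\sup_{\|u\|_2=1}\langle T_h u,u\rangle = \|T_h\|$, and the final display correctly bootstraps $u_1\ge 0$, $u_1\not\equiv 0$ into pointwise strict positivity once $h\ge\eta$.

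Your closing caveat is well placed and is, if anything, worth stating more forcefully. The proposition as worded applies to ``any $h\in\mathcal{W}$'', yet parts of (ii) and (iii) fail at that generality: a general graphon operator may well have negative eigenvalues, so the claim that the eigenvalues can be listed as $\mu_1\ge\mu_2\ge\cdots\ge 0$ is not literally correct (your proof honestly only delivers a decreasing enumeration tending to $0$); and for $h\equiv 0$, or for a block-diagonal $h$, the conclusions $\mu_1>0$ and $u_1>0$ everywhere are false. Your argument implicitly repairs this by invoking \eqref{Assbasic}, which is indeed the only regime the paper ever uses the proposition in, but note that the non-negative eigenfunction you get from the maximiser $|u_1|$ is an a.e.\ statement, so the ``choose the representative $\mu_1^{-1}T_h u_1$'' step is needed exactly as you wrote it to get a genuinely everywhere-positive version. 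In short: correct proof, different (and more explicit) route than the paper's citation, with a legitimate and correctly diagnosed mismatch between the generality claimed in the statement and the generality under which it actually holds.
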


\begin{proof}
The claim is a special case of \cite[Theorem 7.3]{sauvigny} (when the compact Hermitian operators considered there are taken to be the graphon operators). See also \cite[Theorem 19.2]{deimling} and \cite[Appendix A]{disertori}.
\end{proof}


\subsection{Main theorems} 

Let $\lambda_N$ be the \emph{maximal eigenvalue} of the adjacency matrix $A_N$ of $G_N$. Write $\mathbb{P}^*_N$ to denote the law of $\lambda_N/N$.

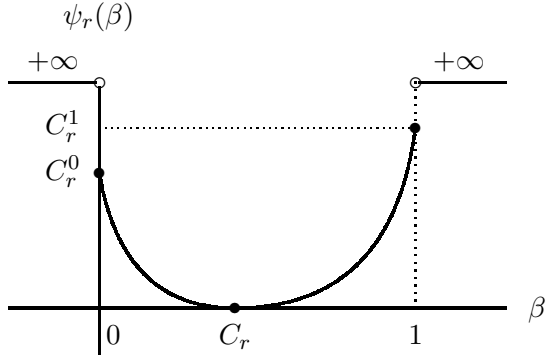
\begin{figure}[htbp]
\vspace{0.3cm}
\begin{center}
\setlength{\unitlength}{0.6cm}
\begin{picture}(10,6)(0,0)
{\thicklines
\qbezier(0,-1)(0,2)(0,4.9)
\qbezier(-2,0)(4,0)(9,0)
\qbezier(3,0)(6.5,0)(7,4)
\qbezier(3,0)(0.5,0)(0,3)
\qbezier(7.1,5)(8,5)(9,5)
\qbezier(-0.1,5)(-1,5)(-2,5)
}
\qbezier[30](7,0)(7,2.5)(7,5)
\qbezier[50](0,4)(3.5,4)(7,4)
\put(2.65,-.8){$C_r$}
\put(6.85,-.8){$1$}
\put(0.15,-.8){$0$}
\put(-1.2,3.85){$C^1_r$}
\put(-1.2,2.85){$C^0_r$}
\put(9.5,-.2){$\beta$}
\put(-.8,6.3){$\psi_r(\beta)$}
\put(7.4,5.3){$+ \infty$}
\put(-1.6,5.3){$+ \infty$}
\put(0,3){\circle*{0.25}}
\put(3,0){\circle*{0.25}}
\put(7,4){\circle*{0.25}}
\put(6.85,4.85){$\circ$}
\put(-0.15,4.85){$\circ$}
\end{picture}
\end{center}
\vspace{0.5cm}
\caption{\small Graph of $\beta \mapsto \psi_r(\beta)$.}
\label{fig-rf}
\vspace{0.2cm}
\end{figure}

\begin{theorem}{\bf [LDP for the maximal eigenvalue]}
\label{thm:main}
Subject to \eqref{Assbasic}, the sequence $(\mathbb{P}^*_N)_{N\in\mathbb{N}}$ satisfies the large deviation principle on $\mathbb{R}$ with rate $\binom{N}{2}$ and with rate function
\begin{equation}
\label{rf}
\psi_r(\beta) 
= \inf_{\substack{\widetilde{h} \in \widetilde{\mathcal{W}} \\ \|T_h\| = \beta}} J_r(\widetilde{h}) 
= \inf_{\substack{h \in \mathcal{W} \\ \|T_h\| = \beta}} I_r(h), \qquad \beta \in \mathbb{R}.
\end{equation}
\end{theorem}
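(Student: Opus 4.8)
The plan is to obtain Theorem~\ref{thm:main} from the graphon-level LDP of Theorem~\ref{thm:LDPinhom} by the contraction principle applied to the map $F\colon(\widetilde{\mathcal W},\delta_\square)\to\mathbb R$ given by $F(\widetilde h)=\|T_h\|$. The first point is that $F$ is well defined on equivalence classes (eigenvalues are unchanged under measure-preserving relabellings of vertices; this also follows from the homomorphism-density representation used below) and that $\mathbb P^*_N=\widetilde{\mathbb P}_N\circ F^{-1}$. Indeed, if $u\in L^2([0,1])$ is constant on each interval $(\tfrac{j-1}{N},\tfrac jN]$ with value $c_j$, then $T_{h^{G_N}}u$ is the step function equal to $\tfrac1N\sum_{j=1}^N A_N(i,j)c_j$ on the $i$-th interval, while $T_{h^{G_N}}u=0$ for every $u$ orthogonal to all such step functions; hence $T_{h^{G_N}}$ acts as $\tfrac1N A_N$ on the $N$-dimensional space of these step functions and as $0$ on its orthogonal complement. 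Since $A_N$ is symmetric and nonnegative, $\lambda_N\ge0$, so Proposition~\ref{prop:basics}(iii) gives $F(\widetilde h^{G_N})=\|T_{h^{G_N}}\|=\lambda_N/N$, and the push-forward identity follows.

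The main obstacle is to show that $F$ is continuous on $(\widetilde{\mathcal W},\delta_\square)$; this is a known property of graphon spectra, but I would include a short proof. For $k\ge1$ set $t(C_{2k},h):=\Tr(T_h^{2k})=\sum_i\mu_i(h)^{2k}$, where $(\mu_i(h))_i$ is the eigenvalue sequence of $T_h$ (the series converges since $T_h$ is Hilbert--Schmidt); this is the homomorphism density of the $2k$-cycle into $h$, and it depends continuously on $h$ in the cut metric. Since $\|T_h\|=\sup_i|\mu_i(h)|$ for the self-adjoint compact operator $T_h$, one has $\|T_h\|=\lim_{k\to\infty}t(C_{2k},h)^{1/(2k)}$, with monotone decreasing convergence. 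Now let $\delta_\square(\widetilde h_n,\widetilde h)\to0$. Using $\sum_i\mu_i(h_n)^2=\|h_n\|_2^2\le1$ (because $h_n\le1$),
\[
\|T_{h_n}\|^{2k}\ \le\ \sum_i\mu_i(h_n)^{2k}\ \le\ \|T_{h_n}\|^{2k-2}\sum_i\mu_i(h_n)^2\ \le\ \|T_{h_n}\|^{2k-2},\qquad k\ge1.
\]
The first inequality gives $\|T_{h_n}\|\le t(C_{2k},h_n)^{1/(2k)}$, so $\limsup_{n}\|T_{h_n}\|\le t(C_{2k},h)^{1/(2k)}$ for every $k$, hence $\limsup_n\|T_{h_n}\|\le\|T_h\|$. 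The last inequality gives $\|T_{h_n}\|\ge t(C_{2k},h_n)^{1/(2k-2)}$ for $k\ge2$, so $\liminf_n\|T_{h_n}\|\ge t(C_{2k},h)^{1/(2k-2)}=\big[t(C_{2k},h)^{1/(2k)}\big]^{2k/(2k-2)}\to\|T_h\|$ as $k\to\infty$. Therefore $\|T_{h_n}\|\to\|T_h\|$, i.e.\ $F$ is continuous.

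Finally, since $J_r$ is a good rate function and $F$ is continuous, the contraction principle applied to Theorem~\ref{thm:LDPinhom} shows that $\mathbb P^*_N=\widetilde{\mathbb P}_N\circ F^{-1}$ satisfies the LDP on $\mathbb R$ with rate $\binom N2$ and good rate function $\psi_r(\beta)=\inf\{J_r(\widetilde h):\widetilde h\in\widetilde{\mathcal W},\ \|T_h\|=\beta\}$ (with the convention $\inf\emptyset=+\infty$), which is the first expression in \eqref{rf}; in particular $\psi_r(\beta)=+\infty$ for $\beta\notin[0,1]$ since $0\le\|T_h\|\le\|h\|_2\le1$. For the second expression, insert \eqref{Jrhdef} and use $\|T_{h^\phi}\|=\|T_h\|$ for every $\phi\in\mathcal M$: then $\{h^\phi:\phi\in\mathcal M,\ h\in\mathcal W,\ \|T_h\|=\beta\}=\{g\in\mathcal W:\|T_g\|=\beta\}$, whence $\inf_{\|T_h\|=\beta}\inf_{\phi\in\mathcal M}I_r(h^\phi)=\inf_{\|T_g\|=\beta}I_r(g)$. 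Apart from the continuity step, everything is routine bookkeeping.
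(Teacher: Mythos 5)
Your proof is the paper's proof with the details written out: both identify $\lambda_N/N = \|T_{h^{G_N}}\|$ and apply the contraction principle to Theorem~\ref{thm:LDPinhom} along the (bounded, continuous) map $\widetilde h \mapsto \|T_h\|$. The only divergence is that the paper cites \cite[Exercises 6.1--6.2, Lemma 6.2]{chatterjeeln2017} for the continuity of this map, whereas you prove it yourself via the cut-metric continuity of the cycle densities $t(C_{2k},\cdot)$ together with the Schatten-norm sandwich $\|T_h\|^{2k} \le \Tr(T_h^{2k}) \le \|T_h\|^{2k-2}$; your argument is correct and is essentially what those cited exercises outline.
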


\begin{proof}
Note that $\lambda_N/N = \|T_{h^{G_N}}\|$, where $h$ is any representative of $\widetilde h$ (because $\|T_{\widetilde{h}}\|=\|T_{h^\phi}\|$ for all $\phi\in\mathcal{M}$). Also note that $\widetilde{h} \mapsto \|T_{\widetilde{h}}\|$ is a bounded and continuous function on $\widetilde{\mathcal{W}}$ \cite[Exercises 6.1--6.2, Lemma 6.2]{chatterjeeln2017}. Hence the claim follows from Theorem \ref{thm:LDPinhom} via the contraction principle \cite[Chapter 3]{dH2000}.
\end{proof}

Put 
\begin{equation}
C_r = \| T_r \|.
\end{equation}
When $\beta = C_r$, the graphon $h$ that minimizes $I_r(h)$ such that $\|T_h\| = C_r$ is the reference graphon $h=r$ almost everywhere, for which $I_r(r) = 0$ and no large deviation occurs. When $\beta > C_r$, we are looking for graphons $h$ with a larger operator norm. The large deviation cannot go above 1, which is represented by the constant graphon $h \equiv 1$, for which $I_r(1) = C^1_r$. Similarly, when $\beta < C_r$, we are looking for graphons $h$ with a smaller operator norm. The large deviation cannot go below 0, which is represented by the constant graphon $h \equiv 0$, for which $I_r(0) = C^0_r$ (see Fig.~\ref{fig-rf}).

\begin{theorem}{\bf [Properties of the rate function]}
\label{thm:rfprop}
Subject to \eqref{Assbasic}:\\
(i) $\psi_r$ is continuous and unimodal on $[0,1]$, with a unique zero at $C_r$.\\
(ii) $\psi_r$ is strictly decreasing on $[0,C_r]$ and strictly increasing on $[C_r,1]$.\\
(iii) For every $\beta \in [0,1]$, the set of minimisers of the variational formula for $\psi_r(\beta)$ in \eqref{rf} is non-empty 
and compact in $\widetilde{\mathcal{W}}$.
\end{theorem}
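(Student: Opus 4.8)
The plan is to prove the three parts in a logical order that reuses work: first establish compactness of minimisers (iii), then use that together with monotonicity of $\mathcal{R}(\cdot\mid b)$ in its first argument to get strict monotonicity (ii), and finally deduce continuity and unimodality (i) as consequences, with the uniqueness of the zero at $C_r$ following from the earlier observation that $I_r(r)=0$ and that $I_r$ vanishes only at $r$ (by strict convexity of $a\mapsto\mathcal{R}(a\mid b)$).

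\smallskip

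For part (iii), I would argue as follows. Fix $\beta\in[0,1]$. The map $\widetilde h\mapsto\|T_{\widetilde h}\|$ is continuous on the compact space $(\widetilde{\mathcal{W}},\delta_\square)$, so the constraint set $\{\widetilde h:\|T_h\|=\beta\}$ is closed, hence compact. Next, $J_r$ is lower semi-continuous on $\widetilde{\mathcal{W}}$ (stated in the excerpt, via \cite{Markering2020}) and is a good rate function. A lower semi-continuous function on a compact set attains its infimum, so the minimiser set is non-empty; it is the intersection of the closed constraint set with a level set $\{J_r\le c\}$ of the good rate function, hence compact. I should also check the constraint set is non-empty for every $\beta\in[0,1]$: the constant graphons $h\equiv c$ have $\|T_h\|=c$, covering $[0,1]$, and more generally rank-one graphons realise a continuum of norms; continuity of $\|T_\cdot\|$ along a path from $r$ to $0$ or $1$ then fills in all intermediate values by the intermediate value theorem. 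The main obstacle here is purely bookkeeping about which representative of $\widetilde h$ one uses; since $\|T_{h^\phi}\|=\|T_h\|$ and $I_r$ is minimised over $\phi$ inside $J_r$, everything is well defined.

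\smallskip

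For part (ii), the key monotonicity input is that $b\mapsto\mathcal{R}(a\mid b)$ and, more importantly, $a\mapsto\mathcal{R}(a\mid b)$ are well understood: $a\mapsto\mathcal{R}(a\mid b)$ is strictly convex, strictly decreasing on $[0,b]$ and strictly increasing on $[b,1]$. Take $\beta_1<\beta_2\le C_r$ and let $h$ be a minimiser for $\beta_2$, with Perron eigenfunction $u_1>0$ and $\|T_h\|=\beta_2$. The idea is to construct a competitor $h'$ with $\|T_{h'}\|=\beta_1$ and $I_r(h')<I_r(h)$, by pushing $h$ pointwise toward $r$ in the region where $h<r$ — this strictly decreases the integrand $\mathcal{R}(h\mid r)$ while (since we are moving up toward $r$, which has larger norm) we must instead move $h$ down, decreasing its norm. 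The cleaner route: interpolate $h_t=(1-t)h+t\,r$ for $t\in[0,1]$; then $t\mapsto\|T_{h_t}\|$ is continuous with $\|T_{h_0}\|=\beta_2<C_r=\|T_{h_1}\|$... this goes the wrong way, so instead interpolate toward the zero graphon, $h_t=(1-t)h$, giving $\|T_{h_t}\|=(1-t)\beta_2$ which decreases continuously through $\beta_1$ at some $t^*\in(0,1)$, and $I_r(h_{t^*})=\int\mathcal{R}((1-t^*)h\mid r)$. One then shows this is $\le I_r(h)$ with equality impossible because on a positive-measure set $h(x,y)>0$ while $(1-t^*)h(x,y)$ and $h(x,y)$ cannot both equal $r(x,y)$, and by strict convexity the value strictly drops unless $h\equiv r$, which is excluded since $\beta_2<C_r$. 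Thus $\psi_r(\beta_1)\le I_r(h_{t^*})<I_r(h)=\psi_r(\beta_2)$, giving strict decrease on $[0,C_r]$; the argument on $[C_r,1]$ is symmetric, interpolating toward $h\equiv1$. The main obstacle is making the "equality impossible" step rigorous — one must rule out that scaling $h$ down happens to land exactly on $r$ on a full-measure set, which is handled by noting $\|T_{(1-t^*)h}\|=\beta_1<C_r=\|T_r\|$, so $(1-t^*)h\ne r$ on a positive-measure set, and then invoking strict convexity of $\mathcal{R}(\cdot\mid r(x,y))$ together with the fact that $I_r$ attains $0$ only at $r$.

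\smallskip

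For part (i), continuity follows from the general fact that a rate function of an LDP on $\mathbb{R}$ obtained via the contraction principle is lower semi-continuous, combined with an upper bound: I would show $\psi_r$ is upper semi-continuous on $[0,1]$ by exhibiting, for each target $\beta$, a family of graphons depending continuously (in $L^2$, hence in operator norm) on $\beta$ with $I_r$ depending continuously on $\beta$ — again the scaled/interpolated graphons $h_t$ above do the job, since $t\mapsto I_r(h_t)$ is continuous by dominated convergence using \eqref{Assbasic} to bound the integrand. Unimodality with a unique zero at $C_r$ is then immediate: strict monotonicity from (ii) forces a single minimum, it is located at $C_r$ because $I_r(r)=0=\psi_r(C_r)$ and $\psi_r\ge0$ everywhere, and it is the unique zero because $I_r(h)=0$ iff $h=r$ a.e.\ (strict convexity of $\mathcal{R}(\cdot\mid b)$ with minimum $0$ at $a=b$), so any $\beta\ne C_r$ has $\|T_h\|\ne\|T_r\|$ for the only candidate $h=r$ and hence $\psi_r(\beta)>0$. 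The endpoints values $\psi_r(0)=C_r^0$ and $\psi_r(1)=C_r^1$ recorded before the theorem confirm finiteness on $[0,1]$, and the LDP rate function is $+\infty$ outside $[0,1]$ since $\|T_h\|\in[0,1]$ always.
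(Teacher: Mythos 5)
Your plan for parts (i) and (iii) is sound, and is if anything more self-contained than the paper's: for (iii) you intersect the closed constraint set $\{\|T_h\|=\beta\}$ with a sublevel set of the l.s.c.\ good rate function $J_r$ inside the compact space $\widetilde{\mathcal{W}}$, and for (i) you combine lower semicontinuity from the LDP with an explicit upper-semicontinuity bound via interpolating families. The paper instead rewrites $\psi_r$ in terms of $\psi_r^\pm$ (defined with inequality constraints $\|T_h\|\ge\beta$ and $\|T_h\|\le\beta$) and imports monotonicity, continuity and compactness of minimiser sets from Chatterjee's monograph, plus the fact that minimisers of $\psi_r^+$ (resp.\ $\psi_r^-$) satisfy $h\ge r$ (resp.\ $h\le r$) a.e. Both routes work.

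Part (ii) has a genuine error. You take a $\beta_2$-minimiser $h$ with $\beta_1<\beta_2\le C_r$, interpolate toward the zero graphon via $h_t=(1-t)h$, and claim $I_r(h_{t^*})\le I_r(h)$ at the $t^*$ where the norm hits $\beta_1$. This inequality is backwards. A minimiser for $\beta_2\le C_r$ satisfies $h\le r$ a.e.\ (this is exactly the Chatterjee lemma the paper invokes), and then $(1-t)h\le h\le r$ pointwise, so the shrink moves the value \emph{further} from $r$; since $a\mapsto\mathcal{R}(a\mid b)$ is strictly decreasing on $[0,b]$, the integrand strictly \emph{increases} wherever $h>0$, and so $I_r(h_{t^*})>I_r(h)$. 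The invocation of strict convexity to argue "the value strictly drops" is not a valid inference: convexity alone says nothing about the sign of the change when you move away from the minimiser $a=b=r(x,y)$. There is also a secondary sign slip: even if the claimed chain $\psi_r(\beta_1)\le I_r(h_{t^*})<I_r(h)=\psi_r(\beta_2)$ held, that would say $\psi_r(\beta_1)<\psi_r(\beta_2)$ for $\beta_1<\beta_2$, i.e.\ strictly \emph{increasing} on $[0,C_r]$, which contradicts the theorem.

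The fix is to reverse the roles. Take a $\beta_1$-minimiser $h_1$ and interpolate \emph{toward} $r$: set $h_t=(1-t)h_1+t\,r$, so $t\mapsto\|T_{h_t}\|$ is continuous from $\beta_1$ to $C_r$ and, by the intermediate value theorem, hits $\beta_2$ at some $t^*\in(0,1]$. Pointwise, $h_{t^*}(x,y)$ lies strictly between $h_1(x,y)$ and $r(x,y)$ wherever $h_1\ne r$, which has positive measure since $\beta_1\ne C_r$; because $\mathcal{R}(\cdot\mid r(x,y))$ is unimodal with minimum at $r(x,y)$, the integrand strictly drops there. Hence
\begin{equation*}
\psi_r(\beta_2)\le I_r(h_{t^*})<I_r(h_1)=\psi_r(\beta_1),
\end{equation*}
which is the required strict decrease on $[0,C_r]$; the argument on $[C_r,1]$ is symmetric. (The paper instead proves (ii) by contradiction: a flat stretch of $\psi_r^+$ would force two ordered minimisers with the same cost to agree a.e., contradicting the distinct norms.)
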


If the reference graphon $r$ is of rank 1, i.e.,
\begin{equation}
\label{rank}
r(x,y) = \nu(x)\, \nu(y), \quad (x,y) \in [0,1]^2, 
\end{equation}
for some $\nu\colon\,[0,1] \to [0,1]$ that is bounded away from $0$ and $1$, then we are able to say more. 
Define 
\begin{equation}
\label{mkdef}
m_k = \int_{[0,1]} \nu^k, \qquad k \in \mathbb{N}.
\end{equation}
Note that $C_r = m_2$. Abbreviate $B_r= \int_{[0,1]^2} r^3(1-r)$, and note that $B_r=m_3^2-m_4^2$. Further abbreviate
\begin{equation}
N^1_r = \int_{[0,1]^2} \frac{1-r}{r}, \qquad 
N^0_r = \int_{[0,1]^2} \frac{r}{1-r}.
\end{equation}
Recall that $\mathcal{M}$ is the set of Lebesgue measure-preserving bijective maps $\phi\colon\,[0,1] \to [0,1]$. 

\begin{theorem}{\bf [Scaling of the rate function]}
\label{thm:rfscaling}
Let $\psi_r$ be the rate function in \eqref{rf}.\\
(i) Subject to \eqref{Assbasic} and \eqref{rank},  
\begin{equation}
\label{scal1}
\psi_r(\beta) = [1+o(1)]\,K_r\,(\beta- C_r)^2, \qquad \beta \to C_r,
\end{equation}
with 
\begin{equation}
\label{Kr,moments}
K_r = \frac{C_r^2}{2B_r} = \frac{m_2^2}{2(m_3^2-m_4^2)}.
\end{equation}
(ii) Subject to \eqref{Assbasic},
\begin{equation}
\label{scal2}
C^1_r - \psi_r(\beta) = (1-\beta) \left[\log\left(\frac{N^1_r}{1-\beta}\right)+1+\mathrm{o}(1)\right],  \qquad \beta \uparrow 1.
\end{equation}
(iii) Subject to \eqref{Assbasic},
\begin{equation}
\label{scal3}
C^0_r - \psi_r(\beta) = \beta \left[\log\left(\frac{N^0_r}{\beta}\right)+1+\mathrm{o}(1)\right], \qquad \beta \downarrow 0.
\end{equation}
\end{theorem}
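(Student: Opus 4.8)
The plan is, in each of the three regimes, to study a minimiser $h_\beta$ of $\psi_r(\beta)=\inf\{I_r(h):h\in\mathcal W,\ \|T_h\|=\beta\}$, which exists by Theorem~\ref{thm:rfprop}(iii). I would first show that $h_\beta$ converges in $L^2([0,1]^2)$ to the relevant ``extreme'' graphon as $\beta$ approaches the endpoint: to $r$ as $\beta\to C_r$ (because $\psi_r$ is continuous with $\psi_r(C_r)=0$, so $I_r(h_\beta)\to0$, and Pinsker's inequality $\mathcal R(a\mid b)\ge 2(a-b)^2$ gives $2\|h_\beta-r\|_2^2\le I_r(h_\beta)$); to $\mathbf 1$ as $\beta\uparrow1$ (because $\beta=\|T_{h_\beta}\|\le\|h_\beta\|_2$ and $0\le h_\beta\le1$ force $\|\mathbf 1-h_\beta\|_1=1-\|h_\beta\|_1\le1-\beta^2\to0$); to $\mathbf 0$ as $\beta\downarrow0$ (because $h_\beta\ge0$ gives $\|h_\beta\|_1=\langle\mathbf 1,T_{h_\beta}\mathbf 1\rangle\le\|T_{h_\beta}\|=\beta$). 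Next I would argue that, for $\beta$ close enough to the endpoint, $h_\beta$ lies strictly inside $(0,1)$ a.e.: were $h_\beta\in\{0,1\}$ on a set of positive measure, perturbing it slightly away from the boundary there changes $I_r$ by an amount of order $s\log\frac1s$ (the second order being negligible), while the induced $O(s)$ change in $\|T_{h_\beta}\|$ can be undone, at $O(s)$ cost, by an interior perturbation (using that the Perron eigenfunction of $T_{h_\beta}$ is strictly positive); for $s$ small this lowers $I_r$, contradicting minimality.

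Granting interiority, $h_\beta$ satisfies the Euler--Lagrange equation. The Fréchet derivatives at $h_\beta$ of $h\mapsto I_r(h)$ and of $h\mapsto\|T_h\|$ are, respectively, $\mathcal R'(h_\beta\mid r)=\log\frac{h_\beta(1-r)}{r(1-h_\beta)}$ and $u_\beta(x)u_\beta(y)$, where $u_\beta$ is the $L^2$-normalised strictly positive Perron eigenfunction of $T_{h_\beta}$ (the top eigenvalue is simple since $h_\beta$ is bounded away from $0$). Hence there is a multiplier $\lambda_\beta\in\mathbb R$ with $\log\frac{h_\beta(x,y)(1-r(x,y))}{r(x,y)(1-h_\beta(x,y))}=\lambda_\beta\,u_\beta(x)u_\beta(y)$, i.e.\ $h_\beta(x,y)=\frac{r(x,y)\,\mathrm{e}^{\lambda_\beta u_\beta(x)u_\beta(y)}}{(1-r(x,y))+r(x,y)\,\mathrm{e}^{\lambda_\beta u_\beta(x)u_\beta(y)}}$. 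Combined with the pointwise identity $\mathcal R(a\mid b)=a\,\mathcal R'(a\mid b)+\log\frac{1-a}{1-b}$ and $\langle u_\beta,T_{h_\beta}u_\beta\rangle=\beta$, this yields the closed expression $I_r(h_\beta)=\lambda_\beta\beta+\int_{[0,1]^2}\log\frac{1-h_\beta}{1-r}$, which is the main tool for (ii)--(iii).

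For (i): here $\lambda_\beta\to0$, $u_\beta\to\nu/\sqrt{m_2}$, and to leading order $h_\beta-r$ is a multiple of $r(1-r)\,\nu\otimes\nu$, the multiple being pinned proportional to $\beta-C_r$ by first-order perturbation theory for the simple top eigenvalue $C_r=m_2$ of $T_r$; substituting this into the quadratic expansion $I_r(h)=\tfrac12\int_{[0,1]^2}\frac{(h-r)^2}{r(1-r)}(1+\mathrm{o}(1))$ near $h=r$, and using $\int_{[0,1]^2}r(1-r)(\nu\otimes\nu)^2=m_3^2-m_4^2=B_r$, gives $\psi_r(\beta)=\frac{C_r^2}{2B_r}(\beta-C_r)^2(1+\mathrm{o}(1))$. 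For (ii): here $\lambda_\beta\to+\infty$, $u_\beta\to\mathbf 1$, and the E--L formula gives $1-h_\beta\sim\frac{1-r}{r}\mathrm{e}^{-\lambda_\beta}$; feeding this back into the relation $(\int u_\beta)^2-\beta=\langle u_\beta,T_{\mathbf 1-h_\beta}u_\beta\rangle$ and the normalisation, one bootstraps from $\int u_\beta\to1$ to $1-\int u_\beta=O((1-\beta)^2)$, whence $\int_{[0,1]^2}(1-h_\beta)\sim\mathrm{e}^{-\lambda_\beta}N^1_r\sim1-\beta$ and $\lambda_\beta=\log\frac{N^1_r}{1-\beta}+\mathrm{o}(1)$; substituting into the closed expression, $C^1_r-I_r(h_\beta)=\lambda_\beta\big((\int u_\beta)^2-\beta\big)+\int_{[0,1]^2}\log\big(1+\tfrac{1-r}{r}\mathrm{e}^{-\lambda_\beta u_\beta(x)u_\beta(y)}\big)=(\lambda_\beta+1)\,\mathrm{e}^{-\lambda_\beta}N^1_r\,(1+\mathrm{o}(1))=(1-\beta)\big[\log\tfrac{N^1_r}{1-\beta}+1+\mathrm{o}(1)\big]$. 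Part (iii) is the mirror image under $r\leftrightarrow1-r$, $\mathbf 1\leftrightarrow\mathbf 0$, $\lambda_\beta\to-\infty$, giving $\mathrm{e}^{\lambda_\beta}N^0_r\sim\beta$, $-\lambda_\beta=\log\tfrac{N^0_r}{\beta}+\mathrm{o}(1)$, and $C^0_r-I_r(h_\beta)=\beta[\log\tfrac{N^0_r}{\beta}+1+\mathrm{o}(1)]$. As an independent check of the ``easy'' inequality $\psi_r\le(\cdot)$ in each case, one can test with the explicit one-parameter families $h_t=r+t\,r(1-r)\,\nu\otimes\nu$, $h_t=\mathbf 1-t\,\tfrac{1-r}{r}$, $h_t=t\,\tfrac{r}{1-r}$ (all in $\mathcal W$ for small $t$) and invoke the implicit function theorem, $t\mapsto\|T_{h_t}\|$ being smooth and strictly monotone through the endpoint.

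The main obstacle is making the Euler--Lagrange step fully rigorous: establishing interiority of the minimiser (so that no complementary-slackness boundary terms appear) and that $\{h:\|T_h\|=\beta\}$ is a smooth hypersurface near $h_\beta$, both of which rest on simplicity of the top eigenvalue of $T_{h_\beta}$ (a Perron--Frobenius / irreducibility fact) and on differentiability of $h\mapsto\|T_h\|$. A second delicate point is the bootstrap controlling how fast $u_\beta$ approaches its limit: since $\lambda_\beta$ diverges in (ii)--(iii), the crude estimate $\|u_\beta-(\text{limit})\|_2\to0$ is insufficient and one genuinely needs $1-\int u_\beta=O((1-\beta)^2)$ (resp.\ $O(\beta^2)$) so that the error in the closed expression is $\mathrm{o}(1-\beta)$ (resp.\ $\mathrm{o}(\beta)$) rather than merely $\mathrm{o}\big((1-\beta)\log\tfrac1{1-\beta}\big)$; this amounts to upgrading the measure of the ``bad set'' where $u_\beta$ is far from its limit from $O(1-\beta)$ down to $O((1-\beta)^2)$, using the self-consistent structure of the E--L formula. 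For (i) an alternative avoiding E--L is arguably cleaner: near-minimisers converge to $r$ in $L^2$; split $[0,1]^2$ into the region where $|h_\beta-r|\le\varepsilon$ (on which $\mathcal R(h_\beta\mid r)\ge\frac{(h_\beta-r)^2}{2r(1-r)}(1-\mathrm{o}_{\varepsilon}(1))$) and a remainder of small measure (controlled by Pinsker $\mathcal R(a\mid r)\ge2(a-r)^2$), and then apply Cauchy--Schwarz with weight $r(1-r)$ against $u_\beta\otimes u_\beta\to\nu\otimes\nu/m_2$ to produce precisely $C_r^2/2B_r$.
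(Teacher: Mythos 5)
Your plan is sound and your formulas check out, but you take a genuinely different route from the paper. You set up and exploit the Euler--Lagrange equation for the full constrained problem $\inf\{I_r(h):\|T_h\|=\beta\}$, derive the closed-form $h_\beta=\frac{r\,\mathrm e^{\lambda_\beta u_\beta\otimes u_\beta}}{(1-r)+r\,\mathrm e^{\lambda_\beta u_\beta\otimes u_\beta}}$ together with the identity $I_r(h_\beta)=\lambda_\beta\beta+\int\log\frac{1-h_\beta}{1-r}$, and read off the asymptotics from the behaviour of $(\lambda_\beta,u_\beta)$. The paper instead never writes a full E--L condition on $h_\beta$: it parametrises the minimiser as a perturbation of the reference graphon ($r+\Delta_\epsilon$, $\mathbf 1-\Delta_\epsilon$, $\Delta_\epsilon$), \emph{linearises the constraint} $\|T_h\|=\beta$ by means of the rank-$1$ resolvent expansion of Lemma~\ref{lemmaexpansion}, reduces via block-graphon approximations to ``balanced'' perturbations $\Delta_\epsilon=\epsilon\Delta$, and solves the resulting (quadratic, resp.\ entropy) Lagrangian for $\Delta$. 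Your approach produces the exact form of the minimiser and the cost identity for free; the paper's approach produces only the leading-order shape of $\Delta$, but in exchange avoids precisely the two technical points you flag: (a) rigorous justification of the E--L step (interiority of $h_\beta$ so no complementary-slackness terms appear, simplicity of the Perron eigenvalue, and Fréchet differentiability of $h\mapsto\|T_h\|$ on the constraint set), and (b) the bootstrap giving a rate for $u_\beta\to\mathbf 1$. On (b), note that the elementary bound $\langle u_\beta,T_{\mathbf 1-h_\beta}u_\beta\rangle\ge0$ already gives $\int u_\beta\ge\sqrt\beta$, hence $1-\int u_\beta=O(1-\beta)$ for free; what is actually needed is $1-\int u_\beta=\mathrm o\big((1-\beta)/\log\tfrac1{1-\beta}\big)$, which is strictly weaker than the $O((1-\beta)^2)$ you aim for but is still not immediate from $O(1-\beta)$ and does require the self-consistent iteration you describe. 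The paper sidesteps this by bounding the tails of the resolvent expansion directly via $\|\Delta_\epsilon\|_1\le\epsilon(2-\epsilon)$ and $|\langle\mathbf 1,(-\Delta_\epsilon)^n\mathbf 1\rangle|=O(\|\Delta_\epsilon\|_2^n)$, at the price of having to argue separately (via Lemma~\ref{upperbdd:energy} and the block-graphon reduction) that the balanced perturbation is indeed optimal. Your suggested upper-bound cross-check via explicit one-parameter families is essentially the easy half of what the paper does in Lemmas~\ref{lem:globalperturbation}, \ref{lem:globalperturbation1}.

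One small caution on your interiority argument and the Perron simplicity claim: ``$h_\beta$ bounded away from $0$'' is available for parts (i) and (ii) (where $h_\beta$ is close to $r\ge\eta$ or to $\mathbf 1$), but \emph{not} for part (iii), where $h_\beta\to\mathbf 0$ and one cannot directly invoke Perron--Frobenius irreducibility. You handle (iii) by the reflection $r\leftrightarrow\hat r=\mathbf 1-r$ on the \emph{cost} (as the paper does), which is fine — but make explicit that the operator-norm constraint does not reflect nicely ($\|T_{\mathbf 1-h}\|\neq1-\|T_h\|$), so the reduction to (ii) is at the level of the energy and the $L^1$-size of the perturbation (using $\|h_\beta\|_1=\langle\mathbf 1,T_{h_\beta}\mathbf 1\rangle\le\beta$), not a literal change of variables in the variational problem.
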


\begin{theorem}{\bf [Scaling of the minimisers]}
\label{thm:minscaling}
Let $h_\beta \in \mathcal{W}$ be  any minimiser of the second infimum in $\eqref{rf}$.\\
(i) Subject to \eqref{Assbasic} and \eqref{rank},
\begin{equation}
\label{scal4}
\lim_{\beta \to C_r} (\beta-C_r)^{-1} \|h_\beta -r - (\beta-C_r)\Delta\|_2 = 0,
\end{equation}
with
\begin{equation}
\label{scal5}
\Delta(x,y) = \frac{C_r}{B_r}\,r(x,y)^2[1-r(x,y)],
\qquad (x,y) \in [0,1]^2.
\end{equation}
(ii) Subject to \eqref{Assbasic},
\begin{equation}
\label{scal6}
\lim_{\beta \uparrow 1} (1-\beta)^{-1} \|1-h_\beta - (1-\beta)\Delta\|_2= 0,
\end{equation} 
with
\begin{equation}
\label{scal7}
\Delta(x,y) = \frac{1}{N^1_r}\,\frac{1-r(x,y)}{r(x,y)}, 
\qquad (x,y) \in [0,1]^2.
\end{equation}
(iii) Subject to \eqref{Assbasic},
\begin{equation}
\label{scal8}
\lim_{\beta \downarrow 0} \beta^{-1} \|h_\beta - \beta\Delta\|_2 = 0,
\end{equation} 
with
\begin{equation}
\label{scal9}
\Delta(x,y) = \frac{1}{N^0_r}\,\frac{r(x,y)}{1-r(x,y)}, 
\qquad (x,y) \in [0,1]^2. 
\end{equation}
\end{theorem}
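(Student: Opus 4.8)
The plan is to reduce everything to the Euler--Lagrange characterisation of a minimiser $h_\beta\in\mathcal{W}$ of the second infimum in \eqref{rf}, whose existence is guaranteed by Theorem~\ref{thm:rfprop}(iii) and which satisfies $I_r(h_\beta)=\psi_r(\beta)$. First I would observe that a minimiser cannot take the value $0$ or $1$ on a set of positive measure, since $\partial_a\mathcal{R}(a\mid b)=\log\frac{a(1-b)}{b(1-a)}\to\mp\infty$ at the endpoints; hence $h_\beta$ is interior, the top eigenvalue of $T_{h_\beta}$ is simple near each of the three endpoint values of $\beta$ (being a small operator-norm perturbation of the simple top eigenvalue of $T_r$, of $T_1$, or of the zero operator after rescaling), so $h\mapsto\|T_h\|$ is differentiable at $h_\beta$ and the Lagrange rule yields, for some multiplier $\lambda_\beta\in\R$,
\begin{equation*}
\partial_a\mathcal{R}\bigl(h_\beta(x,y)\mid r(x,y)\bigr)=\lambda_\beta\,u_\beta(x)\,u_\beta(y),\qquad\text{equivalently}\qquad h_\beta=\frac{r\,\mathrm e^{\lambda_\beta u_\beta(x)u_\beta(y)}}{1-r+r\,\mathrm e^{\lambda_\beta u_\beta(x)u_\beta(y)}},
\end{equation*}
where $u_\beta$ is the normalised positive top eigenfunction of $T_{h_\beta}$. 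Everything then follows by pinning down the asymptotics of the pair $(\lambda_\beta,u_\beta)$, with the constraint $\|T_{h_\beta}\|=\beta$ fixing $\lambda_\beta$ and the spectral structure fixing $u_\beta$.

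For part (i) I would first use the Pinsker-type bound $\mathcal{R}(a\mid b)\ge2(a-b)^2$ (from $\partial_a^2\mathcal{R}\ge4$) to get $2\|h_\beta-r\|_2^2\le I_r(h_\beta)=\psi_r(\beta)\to0$, hence $T_{h_\beta}\to T_r$ in operator norm. As $r=\nu\otimes\nu$ has rank $1$, $T_r$ has simple top eigenvalue $C_r=m_2$ with eigenfunction $\widehat\nu=\nu/\sqrt{m_2}$, so perturbation theory gives $u_\beta\to\widehat\nu$ in $L^2$ and $\lambda_\beta\to0$. Linearising the Euler--Lagrange form gives $h_\beta-r=\lambda_\beta\,r(1-r)\,u_\beta(x)u_\beta(y)+O(\lambda_\beta^2)$ uniformly; first-order perturbation of the top eigenvalue then forces $\langle\widehat\nu,T_{h_\beta-r}\widehat\nu\rangle=(\beta-C_r)(1+\mathrm{o}(1))$, and since $\langle\widehat\nu,T_{r^2(1-r)}\widehat\nu\rangle=\tfrac1{m_2}\int r^3(1-r)=B_r/m_2$ this yields $\lambda_\beta/(\beta-C_r)\to m_2^2/B_r$. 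Substituting back gives \eqref{scal4}--\eqref{scal5}, and evaluating $I_r(h_\beta)=\tfrac12\int\frac{(h_\beta-r)^2}{r(1-r)}(1+\mathrm{o}(1))$ recovers \eqref{scal1} with $K_r=C_r^2/2B_r$.

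Parts (ii) and (iii) are the harder, ``entropic'' regimes, where $\lambda_\beta\to\pm\infty$ and $h_\beta$ approaches the constant graphon $1$ resp.\ $0$. To see the latter I would use continuity of $\psi_r$ (Theorem~\ref{thm:rfprop}(i)) to get $\psi_r(\beta)\to C^1_r=I_r(1)$ resp.\ $\to C^0_r=I_r(0)$, then compactness of $(\widetilde{\mathcal{W}},\delta_\square)$ and lower semicontinuity of $J_r$ to conclude that subsequential limits of $\widetilde h_\beta$ minimise $J_r$ on $\{\|T_{\widetilde h}\|=1\}$ resp.\ $\{\|T_{\widetilde h}\|=0\}$; a Cauchy--Schwarz argument ($T_hu=u$, $\|u\|_2=1$ gives $1=\|T_hu\|_2^2\le\|h\|_2^2\le1$) shows $\|T_h\|=1\Rightarrow h\equiv1$ and $\|T_h\|=0\Rightarrow h\equiv0$, and since the limit is a constant graphon, $\delta_\square$-convergence upgrades to $L^2$, giving $\|1-h_\beta\|_2\to0$ resp.\ $\|h_\beta\|_2\to0$. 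Next, expanding the Euler--Lagrange form for large $|\lambda_\beta|$ gives $1-h_\beta=\frac{1-r}{r}\,\mathrm e^{-\lambda_\beta u_\beta(x)u_\beta(y)}(1+\mathrm{o}(1))$ uniformly (and its mirror near $0$); one then shows $u_\beta\to\mathbf 1$ — with the quantitative rate discussed below — and extracts $\mathrm e^{-\lambda_\beta}$ from the constraint: $\int(1-h_\beta)=\langle\mathbf 1,T_{1-h_\beta}\mathbf 1\rangle=(1-\beta)(1+\mathrm{o}(1))$ on one hand, while $\int(1-h_\beta)\approx\mathrm e^{-\lambda_\beta}N^1_r$ (using $u_\beta\approx\mathbf 1$) on the other, so $\mathrm e^{-\lambda_\beta}=(1-\beta)/N^1_r\,(1+\mathrm{o}(1))$ and $1-h_\beta=(1-\beta)\,\frac1{N^1_r}\frac{1-r}{r}(1+\mathrm{o}(1))$ in $L^2$, i.e.\ \eqref{scal6}--\eqref{scal7}; symmetrically for \eqref{scal8}--\eqref{scal9}. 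Inserting the sharp form of $h_\beta$ into $I_r$ and using $\mathcal{R}(1\mid b)-\mathcal{R}(1-t\mid b)=t\bigl[\log\tfrac{1-b}{tb}+1\bigr]+O(t^2\log\tfrac1t)$ (and its mirror near $0$) then yields \eqref{scal2}--\eqref{scal3}; matching upper bounds on $\psi_r$ come from the trial graphons $r+(\beta-C_r)\Delta$, $1-(1-\beta)\Delta$, $\beta\Delta$, each after a lower-order adjustment enforcing $\|T_h\|=\beta$ exactly.

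The hardest part will be the control of $u_\beta$ in parts (ii)--(iii). The kernel $h_\beta=r\,\mathrm e^{\lambda_\beta u_\beta(x)u_\beta(y)}/(1-r+r\,\mathrm e^{\lambda_\beta u_\beta(x)u_\beta(y)})$ is a self-consistent object, and since $|\lambda_\beta|\to\infty$ one must prove not merely $u_\beta\to\mathbf 1$ but the quantitative bound $|\lambda_\beta|\,\|u_\beta-\mathbf 1\|\to0$, so that the exponential modulation $\mathrm e^{\pm\lambda_\beta u_\beta(x)u_\beta(y)}$ becomes asymptotically constant and the $L^2$-limit of $h_\beta/(1-\beta)$ resp.\ $h_\beta/\beta$ is genuinely proportional to $\tfrac{1-r}r$ resp.\ $\tfrac r{1-r}$ — that is, ruling out localisation of $u_\beta$ and the survival of any modulation. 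This should follow from a spectral-perturbation estimate tying the Rayleigh defect of the constant function for $T_{h_\beta}$ to the spectral gap of $T_{h_\beta}$, bootstrapped through the Euler--Lagrange relation; I expect this, rather than the surrounding calculus, to be the real work. Part (i), by contrast, is comparatively soft, since there $u_\beta\to\widehat\nu$ drops out directly from $h_\beta\to r$ and simplicity of the top eigenvalue of $T_r$.
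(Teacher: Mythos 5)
Your proposal follows a genuinely different route from the paper. Where you attack the exact constrained minimisation problem via a first-order Euler--Lagrange condition, deriving the self-consistent form $h_\beta=\bigl(r\,\mathrm e^{\lambda_\beta u_\beta\otimes u_\beta}\bigr)/\bigl(1-r+r\,\mathrm e^{\lambda_\beta u_\beta\otimes u_\beta}\bigr)$ and then analysing the asymptotics of the multiplier $\lambda_\beta$ and the top eigenfunction $u_\beta$, the paper instead reduces immediately to an asymptotic variational problem: it argues via block-graphon approximation that the optimal perturbation is ``balanced'' of the form $\Delta_\epsilon=\epsilon\Delta$, writes the constraint $\|T_h\|=\beta$ in linearised form (via the rank-1 expansion of Lemma~\ref{lemmaexpansion}), and then applies Lagrange multipliers to the resulting \emph{quadratic} functional $\int\Delta^2/2r(1-r)$ (near $C_r$) or \emph{entropic} functional $\int\epsilon\Delta\log\frac{1-r}{\epsilon\Delta r}$ (near the endpoints), with the multiplier attached to the linear constraint $\int r\Delta=C_r$ resp.\ $\int\Delta=1$ rather than to $\|T_h\|$ directly. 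Your route is more principled in the sense that it characterises the exact minimiser and avoids the somewhat informal balanced/unbalanced dichotomy of the paper; the paper's route has the advantage of never needing to track $u_\beta$, since the reference eigenfunction ($\nu$, $\mathbf1$, or $\mathbf1$ under the reflection $r\mapsto1-r$) enters only through the fixed linear constraint.

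For part (i) your sketch is essentially complete and the arithmetic checks out: $h_\beta-r\approx\lambda_\beta r(1-r)u_\beta\otimes u_\beta$, $u_\beta\to\hat\nu$, $\langle\hat\nu,T_{h_\beta-r}\hat\nu\rangle=(\beta-C_r)(1+\mathrm o(1))$, whence $\lambda_\beta\sim(m_2^2/B_r)(\beta-C_r)$ and $h_\beta-r\sim(\beta-C_r)\tfrac{C_r}{B_r}r^2(1-r)$. The genuine gap is exactly where you flag it, in parts (ii)--(iii). You need a quantitative spectral estimate showing that the exponential modulation $\mathrm e^{-\lambda_\beta(u_\beta(x)u_\beta(y)-1)}$ degenerates to $1$ in $L^2$; since $\lambda_\beta\sim\log(1/(1-\beta))$ diverges, an $L^2$ bound on $u_\beta-\mathbf1$ (which Davis--Kahan gives at rate $(1-\beta)^{1/2}$ from $\|1-h_\beta\|_2=\mathrm O((1-\beta)^{1/2})$ and the $\mathrm O(1)$ spectral gap of $T_1$) is \emph{not} obviously enough: $\mathrm e^{-\lambda_\beta v_\beta}$ can blow up pointwise on the small set where $v_\beta=u_\beta\otimes u_\beta-1$ is negative and of order one, so an $L^\infty$ (or at least uniform-integrability) control of $u_\beta$ seems required, and that does not drop out of spectral perturbation alone. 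You correctly identify this as ``the real work'', but the proposal does not carry it out, and the bootstrapping mechanism you gesture at is not detailed enough to confirm that it closes. Near $\beta=0$ there is the additional wrinkle that $T_{h_\beta}\to0$, so the perturbation must be done on the rescaled operator $T_{h_\beta}/\beta$ whose limit is not a priori known to exist; you hint at ``rescaling'' but do not address it. Also note the paper handles $\beta\downarrow0$ not by a fresh argument but by the reflection $\hat r=1-r$, $h\mapsto1-h$, which maps the $\beta\downarrow0$ problem exactly onto the $\beta\uparrow1$ problem for $\hat r$; you could adopt the same trick to avoid the degenerate-limit issue entirely.
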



\subsection{Discussion and outline}

$\mbox{}$

\medskip\noindent
{\bf 1.}
Theorem~\ref{thm:rfprop} confirms the picture of $\psi_r$ drawn in Fig.~\ref{fig-rf}. It remains open whether or not $\psi_r$ is convex. We do \emph{not} expect $\psi_r$ to be analytic, because bifurcations may occur in the set of minimisers of $\psi_r$ as $\beta$ is varied. 

\medskip\noindent
{\bf 2.}
Theorem~\ref{thm:rfscaling} identifies the scaling of $\psi_r$ near its zero and near its end points, provided $r$ is of rank $1$. Theorem~\ref{thm:minscaling} identifies the corresponding scaling of the minimiser $h_\beta$ of $\psi_r$. Interestingly, the scaling corrections are \emph{not} rank $1$. It remains open to determine what happens near $C_r$ when $r$ is not of rank $1$ (see the Appendix).

\medskip\noindent
{\bf 3.}
The inverse curvature $1/K_r$ equals the variance in the central limit theorem derived in \cite{chakrabarty2019ext}. This is in line with the standard folklore of large deviation theory.

\medskip\noindent
{\bf 4.}
It would be interesting to investigate to what extent the condition on the reference graphon in \eqref{Assbasic} can be weakened to some form of integrability condition. Especially for the upper bound in the LDP this is delicate, because the proof in \cite{souvik2020} is based on \emph{block-graphon approximation} (see \cite{Markering2020}).

\medskip\noindent
{\bf Outline.}
The proof of Theorems~\ref{thm:rfprop}--\ref{thm:minscaling} is given in Section~\ref{sec:rf} and relies on the variational formula in \eqref{rf}. Since the maximal eigenvalue is invariant under relabeling of the vertices, we can work directly with $I_r$ in \eqref{Irhdef} without worrying about the equivalence classes. In Section~\ref{sec:exp} we derive an expansion for the operator norm of a graphon around \emph{any} graphon of rank 1. This expansion will be needed in Section~\ref{sec:rf}.


\section{Expansion around graphons of rank $1$}
\label{sec:exp}

In order to prepare for the proof of Theorem~\ref{thm:rfscaling}, we show how we can expand the operator norm of a graphon around \emph{any} graphon of rank 1.

\begin{lemma}{\bf [Rank 1 expansion]}
\label{lemmaexpansion}
Consider a graphon $\bar{h} \in \mathcal{W}$ of rank 1 such that $\bar{h}(x,y) = \bar{\nu}(x)\bar{\nu}(y)$, $x,y \in [0,1]$. For any $h \in \mathcal{W}$ such that $\|T_{h-\bar{h}} \| < \|T_h\|$, the operator norm $\mu=\|T_h\|$ is a solution of the equation
\begin{equation}
\label{expansion}
\mu = \sum_{n \in \N_0} \frac{1}{\mu^n} \mathcal{F}_n(h, \bar{h}),
\end{equation}
where
\begin{equation}
\label{Fn}
\mathcal{F}_n(h,\bar{h}) = \int_{[0,1]^2} \d x \, \d y \, \bar{\nu}(x) (h-\bar{h})^n(x,y) \bar{\nu}(y).
\end{equation}
\end{lemma}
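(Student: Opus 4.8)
The plan is to write the resolvent of $T_{\bar h}$ explicitly (it is rank one, hence trivial) and feed it into a Neumann-series expansion of the resolvent of $T_h = T_{\bar h} + T_{h-\bar h}$, then evaluate at the eigenvalue $\mu$. Concretely, let $u$ be the Perron eigenfunction of $T_h$ associated with $\mu = \|T_h\|$, normalized somehow; by Proposition \ref{prop:basics}(iii) we have $\mu>0$ and $u$ can be taken strictly positive. Starting from $(T_{\bar h} + T_{h-\bar h})u = \mu u$, I would rearrange to $(\mu I - T_{h-\bar h})u = T_{\bar h}u$. The condition $\|T_{h-\bar h}\| < \|T_h\| = \mu$ guarantees that $\mu I - T_{h-\bar h}$ is invertible with $(\mu I - T_{h-\bar h})^{-1} = \sum_{n\ge 0}\mu^{-(n+1)} T_{h-\bar h}^n$, the Neumann series converging in operator norm. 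Hence $u = \sum_{n\ge 0}\mu^{-(n+1)} T_{h-\bar h}^n T_{\bar h} u$.

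Next I would exploit the rank-one structure: $T_{\bar h}u = \langle \bar\nu, u\rangle\, \bar\nu$, where $\langle\cdot,\cdot\rangle$ is the $L^2([0,1])$ inner product and I write $\bar\nu$ also for the function $x\mapsto\bar\nu(x)$. Write $c = \langle \bar\nu, u\rangle$; note $c\neq 0$ since otherwise $T_{\bar h}u=0$ would force $(\mu I - T_{h-\bar h})u = 0$, contradicting invertibility together with $u\neq 0$. Substituting, $u = c\sum_{n\ge 0}\mu^{-(n+1)} T_{h-\bar h}^n \bar\nu$. Now take the inner product of both sides with $\bar\nu$ and divide by $c$:
\begin{equation}
1 = \sum_{n\ge 0} \frac{1}{\mu^{n+1}} \big\langle \bar\nu,\, T_{h-\bar h}^n \bar\nu\big\rangle.
\end{equation}
The final step is to identify $\langle \bar\nu, T_{h-\bar h}^n \bar\nu\rangle$ with $\mathcal{F}_n(h,\bar h)$ as defined in \eqref{Fn}: since iterating the integral operator $T_{h-\bar h}$ corresponds to taking the graphon power $(h-\bar h)^n$, we get $\langle \bar\nu, T_{h-\bar h}^n\bar\nu\rangle = \int_{[0,1]^2}\d x\,\d y\,\bar\nu(x)(h-\bar h)^n(x,y)\bar\nu(y) = \mathcal{F}_n(h,\bar h)$, with the $n=0$ term reading $\int \bar\nu(x)^2\,\d x$ (interpreting $(h-\bar h)^0$ as the identity kernel). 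Multiplying through by $\mu$ yields exactly \eqref{expansion}.

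The one point requiring genuine care — the \emph{main obstacle} — is the justification of the operator-power versus graphon-power identification together with the convergence of the series after pairing with $\bar\nu$: the Neumann series converges in operator norm, so $u = c\sum_n \mu^{-(n+1)}T_{h-\bar h}^n\bar\nu$ converges in $L^2$, and pairing with the bounded functional $\langle\bar\nu,\cdot\rangle$ is continuous, so term-by-term evaluation is legitimate; but one should check that $T_{h-\bar h}^n$ acting on $\bar\nu\in L^2$ really has kernel $(h-\bar h)^n$, which follows from Fubini since $h-\bar h$ is bounded. I would also remark that $\mu$ being \emph{a} solution (not necessarily characterized uniquely by \eqref{expansion}) is all that is claimed, so no further uniqueness argument is needed. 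Everything else is bookkeeping.
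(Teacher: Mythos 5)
Your proof is correct and follows essentially the same route as the paper's: rearrange the eigenvalue equation to $(\mu I - T_{h-\bar h})u = T_{\bar h}u$, exploit the rank-one structure $T_{\bar h}u = \langle\bar\nu,u\rangle\bar\nu$ and the invertibility of $\mu I - T_{h-\bar h}$, pair with $\bar\nu$, cancel the nonzero scalar $\langle\bar\nu,u\rangle$, and expand the resolvent as a Neumann series. You supply slightly more detail than the paper (the justification that $\langle\bar\nu,u\rangle\neq 0$ and the operator-power versus graphon-power identification), but the argument is the same.
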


\begin{proof}
By Proposition~\ref{prop:basics}, we have
\begin{equation}
T_hu=\mu u,
\end{equation}
where $\mu$ equals both the norm and the maximal eigenvalue of $T_h$, and $u$ is the eigenfunction of $h$ corresponding to $u$. Put $g = h - \bar{h}$ and we have  $(\mu - T_g) u = T_{\bar{h}}u$. This gives
\begin{equation}
u = (\mu - T_g)^{-1} \bar{\nu} \langle \bar{\nu},u\rangle.
\end{equation}
where we use that $\mu - T_g$ is invertible because $\| T_g \| = \|T_{h-\bar{h}} \| < \| T_h \|$. Hence, taking the inner product of $u$ with $\bar{\nu}$ and observing that $\la \bar{\nu}, u\ra \neq 0$, we get
\begin{equation}
\langle \bar{\nu}, u \rangle = \langle\bar{\nu}, u\rangle \langle\bar{\nu}, (\mu - T_g)^{-1} \bar{\nu} \rangle 
\end{equation}
which gives
\begin{equation}
\mu = \langle\bar{\nu},(1 - T_g/\mu)^{-1} \bar{\nu} \rangle.
\end{equation}

We can expand the above to get
\begin{equation}
\label{eqlambda}
\begin{split}
\mu &= \bigg\langle\bar{\nu}, \sum_{n \in \mathbb{N}_0} \left(\frac{T_g}{\mu}\right)^{n} \bar{\nu} \bigg\rangle \\
& = \sum_{n \in \N_0} \frac{1}{\mu^n} \int_{[0,1]^{n+1}} \d x_0 \, \d x_1 \cdots \d x_{n}\,
\bar{\nu}(x_0)g(x_0, x_1) \times \cdots \times g(x_{n-1}, x_n) \bar{\nu}(x_n)\\
& =  \sum_{n \in \N_0} \frac{1}{\mu^n} \mathcal{F}_n(h, \bar{h}).
\end{split}
\end{equation}
Since $\|T_h\| = \mu$, we get \eqref{expansion}. 
\end{proof}

Subject to \eqref{rank}, it follows from Lemma~\ref{lemmaexpansion} with $h=\bar{h}=r$ that
\begin{equation}
\label{Crrank}
C_r = \| T_r \| = m_2,
\end{equation}
because only the term with $n=0$ survives in the expansion.  

\begin{remark}{\bf [Higher rank]} 
{\rm The expansion around reference graphons of rank 1 can be extended to finite rank. We provide the details in the Appendix. In this paper we focus on rank 1, for which Lemma~\ref{lemmaexpansion} allows us to analyse the behaviour of $\psi_r(\beta)$ near the values $\beta=C_r$, $\beta=1$ and $\beta=0$. This corresponds to an expansion around the graphons $h=\nu\times\nu$, $h \equiv 1$ and $h \equiv 0$, which are all of rank 1.} \hfill$\blacksquare$
\end{remark}


\section{Proofs of main theorems}
\label{sec:rf}

Theorem~\ref{thm:rfprop} is proved in Section~\ref{sec:conuni}, Theorems~\ref{thm:rfscaling}--\ref{thm:minscaling} are proved in Sections~\ref{pertCr}--\ref{pert0}. 


\subsection{Continuity, unimodality and unique minimisers}
\label{sec:conuni} 

\begin{proof}
We follow \cite[Chapter 6]{chatterjeeln2017}. Even though this monograph deals with constant reference graphons only, most arguments carry over to $r$ satisfying \eqref{Assbasic}.
   
\medskip\noindent   
(i), (iii) Define
\begin{equation}
\label{phi+-}
\psi_r^+(\beta) = \inf_{\substack{h \in \mathcal{W} \\ \|T_h\| \geq \beta}} I_r(h), 
\quad
\psi_r^-(\beta) = \inf_{\substack{h \in \mathcal{W} \\ \|T_h\| \leq \beta}} I_r(h), 
\qquad \beta \in \mathbb{R}.
\end{equation}
Because $h \mapsto \|T_h\|$ is a nice graph parameter, in the sense of \cite[Definition 6.1]{chatterjeeln2017}, it follows that $\beta \mapsto \psi_r^+(\beta)$ is non-decreasing and continuous, while $\beta \mapsto \psi_r^-(\beta)$ is non-increasing and continuous \cite[Proposition 6.1]{chatterjeeln2017}. (The proof requires the fact that $\| f_n - f \|_2 \to 0$ implies $I_r(f_n) \to I_r(f)$ and that $I_r(f)$ is lower semi-continuous on $\mathcal{W}$.)

The variational formulas in \eqref{phi+-} achieve minimisers. In fact, the sets of minimiser are non-empty compact subsets of $\widetilde{\mathcal{W}}$ \cite[Theorem 6.2]{chatterjeeln2017}. In addition, all minimisers $h$ of $\phi^+_r(h)$ satisfy $h \geq r$ almost everywhere, while all minimisers $h$ of $\phi^-_r$ satisfy $h \leq r$ almost everywhere \cite[Lemma 6.3]{chatterjeeln2017}. Moreover, because 
\begin{equation}
\begin{aligned}
&h_1 \geq h_2 \geq r \quad \Longrightarrow \quad \|T_{h_1}\| \geq \|T_{h_2}\|, \,\,I_r(h_1) \geq I_r(h_2),\\  
&h_1 \leq h_2 \leq r \quad \Longrightarrow \quad \|T_{h_1}\| \leq \|T_{h_2}\|, \,\,I_r(h_1) \leq I_r(h_2),  
\end{aligned}
\end{equation}
(use that $a \mapsto \mathcal{R}(a \mid b)$ is unimodal on $[0,1]$ with unique zero at $b$), it follows that both variational formulas achieve minimisers with norm equal to $\beta$, and so
\begin{equation}
\psi_r(\beta) = \left\{\begin{array}{ll}
\psi_r^+(\beta), &\beta \geq C_r,\\
\psi_r^-(\beta), &\beta \leq C_r. 
\end{array}
\right.
\end{equation}
Hence, $\psi_r$ is continuous and unimodal on $[0,1]$. Since $I_r(h) = 0$ if and only if $h=r$ almost everywhere, it is immediate that $C_r$ is the unique zero of $\psi_r$.     

\medskip\noindent
(ii)
The proof is by contradiction. Suppose that $\beta\mapsto\psi_r^+(\beta)$ is not strictly increasing on $[C_r,1]$. Then there exist $\beta_1,\beta_2 \in [C_r,1]$ with $\beta_1<\beta_2$ such that $\psi_r^+$ is constant on $[\beta_1,\beta_2]$. Consequently, there exist minimisers $h_{\beta_1}^{\phi_1}, h_{\beta_2}^{\phi_2}$ with $\phi_1, \phi_2 \in \mathcal{M}$ satisfying $r \leq h_{\beta_1}^{\phi_1} \leq h_{\beta_2}^{\phi_2}$ such that
\begin{equation}
I_r(h_{\beta_1}^{\phi_1}) = I_r(h_{\beta_2}^{\phi_2}), \qquad 
\|T_{h_{\beta_1}^{\phi_1}}\| = \beta_1 <  \beta_2 = \|T_{h_{\beta_2}^{\phi_2}}\|.
\end{equation} 
However, since $a \mapsto \mathcal{R}(a \mid b)$ is strictly increasing on $[b,1]$ (recall \eqref{Irhdef}), it follows that $h_{\beta_1}^{\phi_1} = h_{\beta_2}^{\phi_2}$ almost everywhere. This in turn implies that $\|T_{h_{\beta_1}^{\phi_1}}\| = \|T_{h_{\beta_2}^{\phi_2}}\|$, which is a contradiction. A similar argument shows that  $\beta\mapsto\psi_r^-(\beta)$ cannot have a flat piece on $[0,C_r]$.
\end{proof}


\subsection{Perturbation around the minimum}
\label{pertCr}

Note that when $\beta = C_r$, the infimum in \eqref{rf} is attained at $h=r$ and $\psi_r(C_r) = 0$. Take $\beta = C_r +\epsilon$ with $\epsilon > 0$ small, and assume that the infimum is attained by a graphon of the form $h = r + \Delta_{\epsilon}$, where $\Delta_{\epsilon} \colon\,[0,1]^2 \to \R$ represents a perturbation of the graphon $r$. Note that $r + \Delta_{\epsilon} \in \mathcal{W}$, and so we are dealing with a perturbation $\Delta_{\epsilon}$ that is symmetric and bounded. We compare
\begin{equation}\label{eq:diff0}
\psi_r(C_r + \epsilon) = \inf_{\substack{\Delta_{\epsilon} \colon\,[0,1]^2 \to \R \\ r + \Delta_{\epsilon} \in \mathcal{W} \\ \|T_{r+\Delta_{\epsilon}}\| = C_r + \epsilon}} I_r(r+\Delta_{\epsilon}) 
\end{equation}  
with $\psi_r(C_r)=0$ by computing the difference
\begin{equation}
\label{diff}
\delta_r(\epsilon) = \psi_r(C_r + \epsilon) - \psi_r(C_r) = \psi_r(C_r + \epsilon)
\end{equation}
and studying its behaviour as $\epsilon \to 0$. Since $r(x,y) = \nu(x) \nu(y)$, $x,y \in [0,1]$, we can use Lemma~\ref{lemmaexpansion} to control the norm of $T_h = T_{r + \Delta_{\epsilon}}$. Pick $\bar{h}=r$ and $h=r + \Delta_{\epsilon}$ in \eqref{expansion} such that $\|\Delta_{\epsilon}\|_2\to 0$ as $\epsilon \to 0$.  Note that $\| T_{\Delta_{\epsilon}} \|\le \|\Delta_{\eps}\|_2 < C_r$ for $\epsilon$ small enough. Hence, writing out the expansion for the norm, we get 
\begin{equation}
\|T_{r+\Delta_{\epsilon}}\| = C_r + \sum_{n \in \N} \frac{1}{\|T_{r+\Delta_{\epsilon}}\|^n}\, 
\mathcal{F}_n(r+\Delta_{\epsilon}, r).
\end{equation}

Since $\|T_{r+\Delta_{\epsilon}}\|= C_r+\epsilon$, we have
\begin{equation}
C_r+\epsilon= C_r+ \frac{ \la \nu, \Delta_{\epsilon} \nu\ra}{C_r+\epsilon} 
+ \sum_{n\in\N \setminus \{1\}} \frac{1}{(C_r+\epsilon)^n} \la \nu, \Delta_{\epsilon}^n\nu\ra
\end{equation}
with $\la \nu, \Delta_{\epsilon} \nu \ra= \int_{[0,1]^2} r\Delta_{\epsilon}$. So
\begin{equation}
\epsilon( C_r+\epsilon)= \int_{[0,1]^2} r\Delta_{\epsilon} +\sum_{n\in\N\setminus\{1\}} 
\frac{1}{(C_r+\epsilon)^{n-1}}\la \nu, \Delta_{\epsilon}^n \nu\ra.
\end{equation}
Since $\nu$ is bounded, using the generalized H\"older's inequality \cite[Theorem 3.1]{lubetzkyzhao2015} we get 
\begin{equation} 
|\la \nu, \Delta_{\epsilon}^n \nu\ra| \le \|\Delta_{\epsilon}\|_2^{n}.
\end{equation}
Since $\|\Delta_{\eps}\|_2\to 0$ as $\eps \to 0$, we can choose $\epsilon$ small enough such that $\|\Delta_{\eps}\|_2 < \tfrac12(C_r+\epsilon)$, which gives 
\begin{equation}
\sum_{n\in\N\setminus\{1\}} \frac{1}{(C_r+\epsilon)^{n-1}}\la \nu, \Delta_{\epsilon}^n \nu\ra
= \mathrm{O}\left( \|\Delta_{\epsilon}\|_2^2\right).
\end{equation}
The constraint $\| r + \Delta_{\epsilon} \| = C_r + \epsilon$ therefore reads
\begin{equation}
\label{constrCr}
\int_{[0,1]^2} r\Delta_{\eps}= \eps C_r+ \eps^2+ \mathrm{O}\left( \|\Delta_{\eps}\|_2^2\right).
\end{equation}
Observe that if $\Delta_{\eps}= \eps\Delta$ for some function $\Delta\in L^2([0,1]^2)$, then
\begin{equation}
\int_{[0,1]^2} r\Delta = C_r\left[1+\mathrm{o}(1)\right].
\end{equation}


\subsubsection{Small perturbation on a given region} 

In what follows we use the standard notation $o(\cdot)$, $O(\cdot)$, $\asymp$ to describe the asymptotic behaviour in the limit as $\epsilon \to 0$. We start by considering different types of small perturbations in a given region and computing their total cost.


We are interested in the finding the asymptotic behaviour of \eqref{eq:diff0}. In the next lemma we show that it is enough to consider $\Delta_\eps$ of the form $\eps \Delta$ for some $\Delta\in L^2([0,1]^2)$, because these perturbations contribute to the minimum cost.

\begin{lemma}{\bf [Order of minimal cost]}
\label{lem:costperturbation0}
Let $\Delta_\eps:[0,1]^2\to \mathbb R$ be such that $r+\Delta_\eps\in \mathcal W$ and $\|T_{r+\Delta_{\eps}}\|=C_r+\eps$. Then 
\begin{equation}
\label{eq:inequal}
I_r(r+ \Delta_\eps) \ge 2\eps^2.
\end{equation}
Moreover, if $\Delta_{\eps}= \eps\Delta$, then
\begin{equation}
\label{eq:costasymp}
I_r(r+\eps\Delta) = [1+o(1)]\,2\eps^2 \int_{[0,1]^2} \frac{\Delta^2}{4r(1-r)}, 
\qquad \eps\to 0.
\end{equation}
\end{lemma}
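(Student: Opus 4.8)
The plan is to split the statement into the lower bound \eqref{eq:inequal} and the exact asymptotics \eqref{eq:costasymp}, and to build both on a second-order Taylor expansion of the relative entropy $\mathcal{R}(\cdot\mid r)$ together with the norm constraint \eqref{constrCr}. The key elementary fact is that for $b$ bounded away from $0$ and $1$ one has the exact identity (integrating the Taylor remainder) $\mathcal{R}(b+t\mid b) = \int_0^t \frac{(t-s)}{(b+s)(1-b-s)}\,\d s$, which is pointwise $\geq \frac{t^2}{2}\cdot\frac{1}{\sup_s (b+s)(1-b-s)} \geq \tfrac12 t^2$ since $(b+s)(1-b-s)\le \tfrac14$, and which equals $[1+o(1)]\,\frac{t^2}{2b(1-b)}$ as $t\to 0$. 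This is where condition \eqref{Assbasic} (i.e. $\eta\le r\le 1-\eta$) enters, to keep the denominators controlled.

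For the lower bound \eqref{eq:inequal}: apply the pointwise bound $\mathcal{R}(r(x,y)+\Delta_\eps(x,y)\mid r(x,y)) \ge \tfrac12 \Delta_\eps(x,y)^2$ and integrate to get $I_r(r+\Delta_\eps) \ge \tfrac12 \|\Delta_\eps\|_2^2$. It then remains to see that the norm constraint forces $\|\Delta_\eps\|_2^2 \ge [4+o(1)]\,\eps^2$, or at least $\ge (4-o(1))\eps^2$, so that the constant $2$ comes out. From \eqref{constrCr}, $\int_{[0,1]^2} r\Delta_\eps = \eps C_r + \eps^2 + \mathrm{O}(\|\Delta_\eps\|_2^2)$; I must first argue $\|\Delta_\eps\|_2\to 0$ (otherwise the $\mathrm{O}(\|\Delta_\eps\|_2^2)$ error is not negligible — but if $\|\Delta_\eps\|_2$ does not go to $0$ then $I_r(r+\Delta_\eps)\ge \tfrac12\|\Delta_\eps\|_2^2$ is bounded away from $0$ and \eqref{eq:inequal} holds trivially for small $\eps$, so we may assume $\|\Delta_\eps\|_2\to 0$). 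Then Cauchy--Schwarz gives $\eps C_r[1+o(1)] = \int r\Delta_\eps \le \|r\|_2\,\|\Delta_\eps\|_2$; since $\|r\|_2^2 = \int r^2 = m_2^2 = C_r^2$ under \eqref{rank}, this yields $\|\Delta_\eps\|_2 \ge \eps[1+o(1)]$, which only gives the constant $\tfrac12$, not $2$. To recover the factor $4$ I instead keep the sharper second-order information: since $\mathcal{R}(r+t\mid r) \ge \tfrac12 t^2$ and more precisely $\ge \frac{t^2}{4r(1-r)}\cdot 2r(1-r)$... — the clean route is to note $(b+s)(1-b-s)\le \tfrac14$ gives $\mathcal{R}(b+t\mid b)\ge 2t^2\cdot\frac{1}{1}$ only if... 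Actually the correct sharpening: $\mathcal R(b+t\mid b) = \int_0^t\frac{t-s}{(b+s)(1-b-s)}\d s \ge \int_0^t (t-s)\,4\,\d s\cdot$ — no. The honest obstacle (see below) is getting the constant $2$ rather than $\tfrac12$; I expect this requires combining the entropy lower bound with the constraint more carefully, e.g. optimizing $\int \mathcal R(r+\Delta\mid r)$ subject to $\int r\Delta = \eps C_r$ via a Lagrange multiplier, which over the pointwise-quadratic model gives minimizer $\Delta \propto r\cdot r(1-r)$ and minimal cost $\sim 2\eps^2$ exactly — so the bound \eqref{eq:inequal} should be proved by comparing the true cost to this quadratic variational problem and checking its value is $2\eps^2$.

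For the exact asymptotics \eqref{eq:costasymp}: set $\Delta_\eps = \eps\Delta$. Then $\mathcal{R}(r+\eps\Delta\mid r) = [1+o(1)]\,\frac{\eps^2\Delta^2}{2r(1-r)}$ pointwise (uniformly, using \eqref{Assbasic} and $\Delta\in L^2$ — a dominated-convergence argument with dominating function $C\eps^2\Delta^2$ handles the integration), hence $I_r(r+\eps\Delta) = [1+o(1)]\,\frac{\eps^2}{2}\int \frac{\Delta^2}{r(1-r)} = [1+o(1)]\,2\eps^2\int\frac{\Delta^2}{4r(1-r)}$, which is \eqref{eq:costasymp}. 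The dominated-convergence step is routine given boundedness of $\Delta$ (note $r+\eps\Delta\in\mathcal W$ forces $|\eps\Delta|\le 1$, in fact $\eta\le r+\eps\Delta$, so $\Delta$ is bounded for each fixed $\eps$ — one should be slightly careful and phrase the limit along $\eps\to 0$ with $\Delta$ fixed).

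\textbf{Main obstacle.} The delicate point is the universal lower bound \eqref{eq:inequal} with the sharp constant $2$: the naive pointwise estimate $\mathcal R(a\mid b)\ge\tfrac12(a-b)^2$ combined with Cauchy--Schwarz only yields $I_r \ge \tfrac12\eps^2$. Obtaining $2\eps^2$ needs the full strength of the constrained quadratic minimization — one must show that among all perturbations meeting the norm constraint, the entropy cost is minimized (to leading order) by the multiple of $r^2(1-r)$ identified in \eqref{scal5}, and that this minimal value is asymptotically $2\eps^2$. I would do this by bounding $I_r(r+\Delta_\eps)$ below by $\inf\{\int \frac{\Delta_\eps^2}{4r(1-r)} : \int r\Delta_\eps = \eps C_r[1+o(1)]\}$ (valid once $\|\Delta_\eps\|_2\to0$, via the pointwise bound $\mathcal R(r+t\mid r)\ge[1-o(1)]\frac{t^2}{4r(1-r)}$ uniformly on the relevant range) and then solving this Cauchy--Schwarz-type problem explicitly, whose optimal value is $\frac{(\eps C_r)^2}{\int 4r(1-r)\cdot(r/(4r(1-r)))^2\cdots}$ — i.e. $\eps^2 C_r^2 / \big(\tfrac14\int \frac{r^2}{r(1-r)}\cdot\! \big)^{-1}$-type expression that, using $C_r=m_2$ and $\int r^2=m_2^2$, evaluates to $2\eps^2$. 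The remaining risk is just bookkeeping in this explicit optimization, and confirming that the $\mathrm{o}(1)$ errors in the constraint do not spoil the constant.
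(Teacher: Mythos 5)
Your second part (the asymptotics \eqref{eq:costasymp}) is correct and matches the paper's approach: expand $\mathcal R(r+t\mid r)$ to second order, use \eqref{Assbasic} to control the denominator $r(1-r)$ and the third-order remainder, and integrate. No issue there.

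The lower bound \eqref{eq:inequal} is where you went astray, and the "main obstacle" you describe is a phantom created by an arithmetic slip. Write $\chi(a)=\mathcal R(a\mid b)$; then $\chi(b)=\chi'(b)=0$ and $\chi''(a)=\frac{1}{a(1-a)}\ge 4$ since $a(1-a)\le\tfrac14$. Hence $\chi(a)\ge 2(a-b)^2$, \emph{not} $\tfrac12(a-b)^2$. Your own Taylor-integral formula already shows this: since $(b+s)(1-b-s)\le\tfrac14$, one has $\int_0^t\frac{t-s}{(b+s)(1-b-s)}\,\d s\ \ge\ 4\int_0^t(t-s)\,\d s = 2t^2$; you appear to have read the factor $\big(\tfrac14\big)^{-1}=4$ as $\tfrac14$. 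Once the correct pointwise bound $\mathcal R(a\mid b)\ge 2(a-b)^2$ is in hand, $I_r(r+\Delta_\eps)\ge 2\|\Delta_\eps\|_2^2$ is immediate, and the whole constrained Lagrange-multiplier detour you sketch (which you never actually complete — the displayed expressions degenerate into fragments) is unnecessary.

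There is a second, smaller deviation from the paper. To close the bound one needs $\|\Delta_\eps\|_2\ge\eps$. You propose Cauchy--Schwarz applied to the constraint $\int r\Delta_\eps=\eps C_r+\eps^2+\mathrm O(\|\Delta_\eps\|_2^2)$, which (i) only yields $\|\Delta_\eps\|_2\ge\eps[1+o(1)]$, hence only $I_r\ge 2\eps^2[1+o(1)]$ rather than the exact inequality $I_r\ge 2\eps^2$ stated in the lemma, (ii) requires the rank-1 identity $\|r\|_2=C_r$, and (iii) requires a case analysis to control the $\mathrm O(\|\Delta_\eps\|_2^2)$ error. The paper gets the clean, exact bound by the triangle inequality for operator norms:
\begin{equation}
C_r+\eps \;=\; \|T_{r+\Delta_\eps}\| \;\le\; \|T_r\|+\|T_{\Delta_\eps}\| \;\le\; C_r+\|\Delta_\eps\|_2,
\end{equation}
so $\|\Delta_\eps\|_2\ge\eps$ exactly, and combined with $I_r(r+\Delta_\eps)\ge 2\|\Delta_\eps\|_2^2$ this gives \eqref{eq:inequal} with no error terms and no rank-1 assumption. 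You should replace the Cauchy--Schwarz step with this.
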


\begin{proof}
Fix $b\in [0,1]$ and abbreviate (recall \eqref{Rdef})
\begin{equation}
\label{eq:chidef}
\chi(a) = \mathcal{R}(a \mid b) = a \log \frac{a}{b} + (1-a) \log \frac{1-a}{1-b}, \qquad a \in [0,1].
\end{equation}
Note that 
\begin{equation}
\chi(b) = \chi^\prime(b)=0, \quad \chi^{\prime \prime}(a) \geq 4, \quad a \in  [0,1].
\end{equation}
Consequently, 
\begin{equation} 
\chi(a) \ge 2 (a- b)^2, \qquad  a\in [0,1],
\end{equation}
and hence
\begin{equation}
\label{eq:lbd1}
I_r( r+\Delta_\eps) \geq 2 \int_{[0,1]^2} \Delta_\eps^2 = 2\| \Delta_\eps\|_2^2.
\end{equation}
Next observe that
\begin{equation}
C_r+\eps = \|T_{r+\Delta_{\eps}}\|= \|T_r+T_{\Delta_\eps}\|
\le \|T_r\|+ \|T_{\Delta_\eps}\|\le C_r+\|\Delta_\eps\|_2,
\end{equation}
which gives $\|\Delta_{\eps}\|_2\ge \eps$. Inserting this lower bound into \eqref{eq:lbd1}, we get \eqref{eq:inequal}. To get  \eqref{eq:costasymp}, we need a higher-order expansion of $\chi$, namely, $\chi(x) = \tfrac12 \chi''(b)(x-b)^2 + \mathrm{O}((x-b)^3)$, $x \to b$. Since $r$ is bounded away from $0$ and $1$, and the constraint $r+\Delta_\eps\in \mathcal W$ implies that $\Delta_{\eps}(x,y) \in [-1, 1]$, we see that the third-order term is smaller than the second-order term when $\Delta_\eps= \eps \Delta$. Hence \eqref{eq:costasymp} follows. 
\end{proof}

\begin{lemma}{\bf [Cost of small perturbations]}
\label{lem:costperturbation}
Let $B \subseteq [0,1]^2$ be a measurable region with area $|B|$. Suppose that $\Delta_{\epsilon} = \epsilon^{\alpha} \Delta$ on $B$, with $\epsilon > 0$, $\alpha >0$ and $\Delta\colon [0,1]^2 \to \R$. Then the contribution of $B$ to the cost $I_r(h)$ is 
\begin{equation}
\int_B \d x \, \d y  \, \mathcal{R}(h(x,y) \mid r(x,y)) 
= [1+\mathrm{o}(1)] \,\epsilon^{2\alpha} \int_{B} \frac{\Delta^2}{2r(1-r)}, \qquad \epsilon \to 0.
\end{equation}
If the integral diverges, then the contribution decays slower than $\epsilon^{2\alpha}$. 
\end{lemma}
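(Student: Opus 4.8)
The plan is to run the same Taylor-expansion argument as in the proof of Lemma~\ref{lem:costperturbation0}, but now keeping track of the base point $b=r(x,y)$ as it ranges over $[\eta,1-\eta]$, and then to integrate over $B$.

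First I would record the local behaviour of the relative entropy. With $\chi_b(a)=\mathcal{R}(a\mid b)$ as in \eqref{eq:chidef}, one computes $\chi_b(b)=0$, $\chi_b'(b)=0$, $\chi_b''(a)=\tfrac{1}{a(1-a)}$ and $\chi_b'''(a)=\tfrac{2a-1}{a^2(1-a)^2}$. In particular there is a constant $C_\eta<\infty$, depending only on $\eta$, such that $|\chi_b'''(a)|\le C_\eta$ for all $a\in[\tfrac{\eta}{2},1-\tfrac{\eta}{2}]$ (uniformly in $b$). Taylor's theorem with Lagrange remainder then yields, uniformly over $b\in[\eta,1-\eta]$ and $|t|\le\tfrac{\eta}{2}$,
\[
\mathcal{R}(b+t\mid b)=\frac{t^2}{2b(1-b)}+\theta\,\frac{C_\eta}{6}\,|t|^3,\qquad |\theta|\le 1.
\]

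Next, on $B$ I would set $b=r(x,y)\in[\eta,1-\eta]$ and $t=\epsilon^\alpha\Delta(x,y)$, with $h=r+\Delta_\epsilon$. The constraint $r+\Delta_\epsilon\in\mathcal W$ together with \eqref{Assbasic} forces $\epsilon^\alpha|\Delta|\le 1-\eta$ on $B$, so $\Delta$ is essentially bounded on $B$ for the range of $\epsilon$ under consideration; taking $\epsilon$ small enough that $\epsilon^\alpha\|\Delta\|_{L^\infty(B)}\le\tfrac{\eta}{2}$, the displayed expansion applies pointwise and gives
\[
\mathcal{R}\big(h(x,y)\mid r(x,y)\big)=\frac{\epsilon^{2\alpha}\Delta(x,y)^2}{2r(x,y)(1-r(x,y))}+\mathrm{O}\!\big(\epsilon^{3\alpha}|\Delta(x,y)|^3\big)
\]
uniformly on $B$. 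Integrating, the error is $\mathrm{O}\big(\epsilon^{3\alpha}\int_B|\Delta|^3\big)=\mathrm{O}(\epsilon^{3\alpha})$ (using $|\Delta|^3\le\|\Delta\|_{L^\infty(B)}\Delta^2$ and $\int_B\Delta^2<\infty$), which is negligible against $\epsilon^{2\alpha}\int_B\frac{\Delta^2}{2r(1-r)}$ whenever the latter is positive; this yields the claimed $[1+\mathrm{o}(1)]$ asymptotics. For the divergent case I would truncate: if $\int_B\frac{\Delta^2}{2r(1-r)}=+\infty$, fix $M>0$, put $B_M=\{(x,y)\in B:\,|\Delta(x,y)|\le M\}$, apply the pointwise lower bound $\mathcal{R}(h\mid r)\ge[1-\mathrm{o}(1)]\,\epsilon^{2\alpha}\frac{\Delta^2}{2r(1-r)}$ on $B_M$ and only nonnegativity of relative entropy on $B\setminus B_M$; then $\liminf_{\epsilon\to0}\epsilon^{-2\alpha}\int_B\mathcal{R}(h\mid r)\ge\int_{B_M}\frac{\Delta^2}{2r(1-r)}$ for every $M$, and letting $M\to\infty$ by monotone convergence shows this $\liminf$ is $+\infty$, i.e.\ the contribution of $B$ decays strictly slower than $\epsilon^{2\alpha}$.

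The main obstacle is purely one of uniformity: controlling the cubic Taylor remainder of $\mathcal{R}(\cdot\mid r(x,y))$ simultaneously for all $(x,y)\in B$, which is exactly where \eqref{Assbasic} does the work by keeping $r$, and hence $r+\epsilon^\alpha\Delta$, bounded away from $0$ and $1$; the remaining care is the bookkeeping in the truncation step, while everything else mirrors the second-order expansion already carried out for Lemma~\ref{lem:costperturbation0}.
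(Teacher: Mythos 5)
Your proposal is correct and takes essentially the same approach as the paper: a pointwise second-order Taylor expansion of $\mathcal{R}(\cdot\mid r(x,y))$ around $r(x,y)$ with a uniform cubic remainder bound enforced by \eqref{Assbasic}, which is exactly the mechanism behind Lemma~\ref{lem:costperturbation0} that the paper invokes by analogy. Your uniformity bookkeeping and truncation argument for the divergent case are more explicit than the paper's terse "similar to Lemma~\ref{lem:costperturbation0}," but they do not constitute a different route.
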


\begin{proof}
The proof is similar to that of Lemma~\ref{lem:costperturbation0}.
\end{proof}


\subsubsection{Approximation by block graphons}

We next introduce block graphons, which will be useful for our perturbation analysis. It follows from Lemma~\ref{lem:costperturbation0} that optimal perturbations with $\Delta_\eps$ must satisfy $\|\Delta_{\eps}\|_2\asymp \eps$, and hence it is desirable to have $\Delta_\eps= \eps \Delta$. We argue through block graphon approximations that this is indeed the case.

\begin{definition}{\bf [Block graphons]}
\label{def:blockgraphons}
Let $\mathcal{W}_N\subset\mathcal W$ be the space of graphons with $N$ blocks having a constant value on each of the blocks, i.e., $f\in \mathcal W_N$ is of the form
\begin{equation}
f(x,y)= \begin{cases}
f_{i,j}, & \, \, (x,y)\in B_i \times B_j,\\
0, & \,\, \text{otherwise},
\end{cases}
\end{equation}
where $B_i=[ \frac{i-1}{N}, \frac{i}{N})$, $1\le i\le N-1$ and $B_N=[\frac{N-1}{N}, 1]$ and $f_{i,j}\in [0,1]$. Write $B_{i,j}= B_i\times B_j$. With each $f \in \mathcal{W}$ associate the block graphon $f_N \in \mathcal{W}_N$ given by
\begin{equation}
f_N(x,y) =  N^2 \int_{B_{i,j}}  \d x' \, \d y' \, f(x', y') = \bar{f}_{N, ij}, \quad (x,y) \in B_{i,j}.
\end{equation}
\end{definition}

Observe that if $f_N$ is the block graphon associated with a graphon $f$, then
\begin{equation}
\|T_{f_N}- T_f\|= \|T_{f_N-f}\|\le \|f_N-f\|_2.
\end{equation}
We know from \cite[Proposition 2.6]{chatterjeeln2017} that $\|f_N-f\|_2\to 0$, and hence $\lim_{N\to\infty} \|T_{f_N}\| = \| T_f \|$ for any $f \in \mathcal{W}$ and its associated sequence of block graphons $(f_N)_{N \in \mathbb{N}}$. The following lemma shows that the cost function associated with the graphons $r$ and $f$ is well approximated by the cost function associated with the block graphons $r_N$ and $f_N$.

\begin{lemma}{\bf [Convergence of the cost function]}
\label{lem:convergencecost}
$\lim_{N\to\infty} I_{r_N}(f_N) = I_{r} (f)$ for any $f \in \mathcal{W}$.
\end{lemma}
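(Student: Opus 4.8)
The plan is to show $\lim_{N\to\infty} I_{r_N}(f_N) = I_r(f)$ by interpolating between $I_{r_N}(f_N)$ and $I_r(f)$ through a two-step argument: first replace $r_N$ by $r$ in the first argument, then use the known $L^2$-convergence $f_N \to f$ together with continuity of $I_r$. More precisely, I would write
\[
|I_{r_N}(f_N) - I_r(f)| \leq |I_{r_N}(f_N) - I_r(f_N)| + |I_r(f_N) - I_r(f)|.
\]
The second term tends to $0$ by the fact, quoted in the excerpt just before Lemma~\ref{lem:costperturbation0}, that $\|g_n-g\|_2\to 0$ implies $I_r(g_n)\to I_r(g)$, applied with $g_n=f_N$ and $g=f$, since $\|f_N-f\|_2\to 0$ by \cite[Proposition 2.6]{chatterjeeln2017}. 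So the work is all in the first term.

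For the first term, I would write it out as
\[
I_{r_N}(f_N) - I_r(f_N) = \int_{[0,1]^2} \d x\, \d y\, \bigl[\mathcal{R}\bigl(f_N(x,y)\mid r_N(x,y)\bigr) - \mathcal{R}\bigl(f_N(x,y)\mid r(x,y)\bigr)\bigr].
\]
Since $\mathcal{R}(a\mid b) = a\log\frac ab + (1-a)\log\frac{1-a}{1-b}$, the integrand depends on $b$ only through $-a\log b - (1-a)\log(1-b)$, and the map $b \mapsto (-\log b, -\log(1-b))$ is Lipschitz on $[\eta, 1-\eta]$ by \eqref{Assbasic}. Both $r$ and $r_N$ take values in $[\eta,1-\eta]$ (the latter because $r_N$ is an average of values of $r$, hence lies in the same interval), so the integrand is bounded in absolute value by $C\,|r_N(x,y) - r(x,y)|$ for a constant $C=C(\eta)$, uniformly in $(x,y)$ and in the value $f_N(x,y)\in[0,1]$. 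Therefore
\[
|I_{r_N}(f_N) - I_r(f_N)| \leq C \int_{[0,1]^2} |r_N - r| \leq C\,\|r_N - r\|_2,
\]
which tends to $0$, again by \cite[Proposition 2.6]{chatterjeeln2017} applied to the fixed graphon $r$. Combining the two bounds gives the claim.

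The main obstacle — really the only subtle point — is verifying that the difference of relative entropies is controlled uniformly in $a=f_N(x,y)$ by the difference in the second arguments; this is where \eqref{Assbasic} is essential, as it keeps both $r$ and $r_N$ away from $0$ and $1$ so that $\log$ and $\log(1-\cdot)$ are Lipschitz. A secondary point worth a line is that $r_N$ inherits the bounds $\eta \leq r_N \leq 1-\eta$ from $r$, since $r_N(x,y)$ is an average of the values of $r$ over a block. Everything else is a direct appeal to results already available: continuity of $I_r$ under $L^2$-convergence (used in Section~\ref{sec:conuni}) and the $L^2$-convergence of block-graphon approximants.
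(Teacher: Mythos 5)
Your proposal is correct and takes essentially the same route as the paper: the same triangle-inequality decomposition into $|I_{r_N}(f_N) - I_r(f_N)| + |I_r(f_N) - I_r(f)|$, the first term controlled by a bound of the form $C\|r_N - r\|_1 \leq C\|r_N - r\|_2$ and the second by $L^2$-continuity of $I_r$ together with $\|f_N-f\|_2 \to 0$. The only difference is that the paper cites the Lipschitz-in-$r$ estimate from an external reference, whereas you prove it directly from the uniform Lipschitz continuity of $b \mapsto (-\log b, -\log(1-b))$ on $[\eta, 1-\eta]$ and the fact that $r_N$ inherits the bounds \eqref{Assbasic} as a block average of $r$; your version is slightly more self-contained but is the same estimate.
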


\begin{proof}
Since $f \in L^2([0,1]^2)$, $f_N$ is bounded. The assumption in \eqref{Assbasic} implies that $\eta \leq r_N \leq 1-\eta$ for all $N \in \mathbb{N}$. We know from \cite[Lemma 2.3]{souvik2020} that there exists a constant $c > 0$ independent of $f$ such that
\begin{equation}
| I_{r_N}(f) - I_r(f) | \leq c\,  \| r_N - r \|_1 \leq c \, \| r_N - r \|_2.
\end{equation}
Hence
\begin{equation}
\begin{split}
| I_{r_N}(f_N) - I_r(f) | & \leq | I_{r_N}(f_N) - I_r(f_N) | + | I_{r}(f_N) - I_r(f) | \\
& \leq c \, \| r_N - r \|_2 + | I_{r}(f_N) - I_r(f) |.
\end{split}
\end{equation}
But we know from \cite[Proposition 2.6]{chatterjeeln2017} that $\lim_{N\to\infty} \| r_N - r \|_2 = 0$ and $\lim_{N\to\infty} \| f_N - f \|_2 = 0$, while we know from \cite[Lemma 3.4]{Markering2020} that $I_r$ is continuous in the $L^2$-topology on $\mathcal{W}$.
\end{proof}


\subsubsection{Block graphon perturbations}

In what follows we fix $N\in\N$, analyse different types of perturbation and identify which one is optimal. For each $N\in\N$, we associate with the perturbed graphon $h = r + \Delta_{\epsilon}$ the block graphon $h_N \in \mathcal{W}_N$ given by
\begin{equation}
\overline{h}_{N, ij}(x,y)  = \overline{r}_{N, ij}(x,y)  + \overline{\Delta_{\epsilon}}_{N, ij}(x,y) , \quad (x,y) \in B_{i,j},
\end{equation}
with 
\begin{equation}
\overline{r}_{N,ij} = N^2 \int_{B_{i,j}}  \d x' \, \d y' \, r(x', y'), \qquad
\overline{\Delta_{\epsilon}}_{N, ij}= N^2 \int_{B_{i,j}}  \d x' \, \d y' \, \Delta_{\epsilon}(x', y') .
\end{equation}
Observe that optimal perturbations must have $\|\Delta_\eps\|_2= \mathrm{O}(\eps)$, and hence the constraint in \eqref{constrCr} becomes
\begin{equation}
\label{blockconstraint}
\sum_{i,j=1}^N \int_{B_{i,j}} r \Delta_{\epsilon} 
= \sum_{i,j = 1}^N \frac{1}{N^2}\,\overline{r \Delta_{\epsilon}}_{N, ij} = [1+\mathrm{o}(1)]\,C_r \epsilon, 
\qquad \epsilon \to 0.
\end{equation}
The block constraint in \eqref{blockconstraint} implies that the sum over each block must be of order $\epsilon$. We therefore must have that
\begin{equation}
\overline{r \Delta_{\epsilon}}_{N, ij} = \mathrm{O}(\epsilon), \qquad \epsilon \to 0 \qquad \forall\, (i,j),
\end{equation}
which means that 
\begin{equation}
\overline{\Delta_{\epsilon}}_{N, ij} = \mathrm{O}(\epsilon), \qquad \epsilon \to 0 \qquad \forall\, (i,j),
\end{equation}
since \eqref{Assbasic} implies that $\overline{r \Delta_{\epsilon}}_{N, ij} \asymp\overline{\Delta_{\epsilon}}_{N, ij}$. There are two possible cases:
\begin{itemize}
\item[(I)] 
All blocks contribute to the constraint with a term of order $\epsilon$ (balanced perturbation).
\item[(II)] 
Some blocks contribute to the constraint with a term of order $\epsilon$ and some with $o(\epsilon)$ (unbalanced perturbation).
\end{itemize}
\noindent
Perturbations of type (I) consist of a small perturbation on each block, i.e., $\overline{\Delta_{\epsilon}}_{N, ij} \asymp{\epsilon}$ for each block $B_{i,j}$. By Lemma \ref{lem:costperturbation}, this contributes a term of order $\epsilon^2$ to the total cost. Since all blocks have the same type of perturbation, they all contribute in the same way, and so we get $I_{r_N}(h_N) \asymp \epsilon^2$. We will see in Corollary \ref{cor:lessarea} that perturbations of type (II) are worse than perturbations of type (I). Let $1 \leq k \leq N^2-1$ be the number of blocks that contribute a term of order $\mathrm{o}(\epsilon)$ to the constraint, i.e., $\overline{\Delta_{\epsilon}}_{N, ij} = \mathrm{o}(\epsilon)$. By Lemma \ref{lem:costperturbation}, these blocks contribute order $\mathrm{o}(\epsilon^2)$ to the total cost. The remaining blocks must fall in the class of blocks of type (I), with a perturbation of order $\epsilon$ on each of them. Lemma \ref{cor:lessarea} below shows that the cost function attains its infimum when the perturbation of order $\epsilon$ is uniform on $[0,1]^2$. 


\subsubsection{Optimal perturbation}

We have shown that perturbations of type (I) lead to the minimal total cost. They consist of small perturbations of order $\epsilon$ on all blocks, and hence on $[0,1]^2$. A sequence of such perturbations $(\Delta_{\epsilon, N})_{N \in \mathbb{N}}$ converges to a perturbation $\Delta_{\epsilon}$ as $N \to \infty$. We can identify the cost of $\Delta_{\epsilon} = \epsilon \Delta$ with $\Delta\colon\,[0,1]^2 \to \R$, which we refer to as a balanced perturbation.

\begin{lemma}{\bf [Balanced perturbations]}
\label{lem:globalperturbation}
Suppose that $\Delta_{\epsilon}=\epsilon \Delta$ with $\Delta\colon\,[0,1]^2 \to \R$. Let $\mathcal{M}$ be the set of Lebesgue measure-preserving bijective maps. Then
\begin{equation}
\delta_r(\epsilon) = [1+\mathrm{o}(1)] \, K_r \epsilon^2, \qquad \epsilon \to 0,
\end{equation}
with
\begin{equation}
\label{KCr*}
K_r = \frac{1}{2}C_r^2 \inf_{\phi \in \mathcal{M}}
\frac{D_r^{\phi}}{(B_r^{\phi} )^2},
\end{equation}
where $B_r^{\phi} = \int_{[0,1]^2} r^{\phi} r^2(1-r)$ and $D_r^{\phi} = \int_{[0,1]^2} (r^{\phi})^2 r(1-r)$.
\end{lemma}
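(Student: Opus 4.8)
The plan is to carry out a Lagrange-type optimization: we want to minimize the cost $I_r(r+\eps\Delta)$, whose leading order by \eqref{eq:costasymp} is $[1+\mathrm{o}(1)]\,2\eps^2\int_{[0,1]^2}\tfrac{\Delta^2}{4r(1-r)} = [1+\mathrm{o}(1)]\,\tfrac{\eps^2}{2}\int_{[0,1]^2}\tfrac{\Delta^2}{r(1-r)}$, subject to the norm constraint \eqref{constrCr}. The subtlety is that the constraint involves the operator norm $\|T_{r+\eps\Delta}\|$, not a linear functional of $\Delta$; but Lemma~\ref{lemmaexpansion} with $\bar h=r$ reduces it, to leading order, to the \emph{linear} constraint $\int_{[0,1]^2} r\Delta = C_r[1+\mathrm{o}(1)]$ (this is exactly \eqref{constrCr} with $\Delta_\eps=\eps\Delta$, dividing by $\eps$). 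So at leading order the problem is: minimize $\tfrac12\int \tfrac{\Delta^2}{r(1-r)}$ over $\Delta\in L^2([0,1]^2)$ subject to $\int r\Delta = C_r$.

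First I would solve this quadratic minimization by Cauchy--Schwarz (equivalently, a Lagrange multiplier). Writing $\int r\Delta = \int \frac{r\sqrt{r(1-r)}}{\,} \cdot \frac{\Delta}{\sqrt{r(1-r)}}$, Cauchy--Schwarz gives
\begin{equation}
\Big(\int_{[0,1]^2} r\Delta\Big)^2 \le \Big(\int_{[0,1]^2} r^3(1-r)\Big)\Big(\int_{[0,1]^2}\frac{\Delta^2}{r(1-r)}\Big) = B_r \int_{[0,1]^2}\frac{\Delta^2}{r(1-r)},
\end{equation}
with equality iff $\Delta$ is proportional to $r^2(1-r)$. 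Imposing $\int r\Delta = C_r$ forces the optimal $\Delta$ to be $\Delta = \frac{C_r}{B_r} r^2(1-r)$ (which is \eqref{scal5}), and then $\int\frac{\Delta^2}{r(1-r)} = \frac{C_r^2}{B_r}$, so the minimal leading cost is $\tfrac12\,\frac{C_r^2}{B_r}\,\eps^2$, giving $K_r = \frac{C_r^2}{2B_r}$. I would then reconcile this with the stated form \eqref{KCr*}: the infimum over $\phi\in\mathcal M$ there comes from the fact that in the variational formula \eqref{rf} one minimizes over the equivalence class $\widetilde h$, i.e.\ over all relabelings $r^\phi$ of the reference graphon appearing in the constraint versus the cost weight — one term plays the role of the constraint kernel and the other the cost kernel, and since $\phi=\mathrm{id}$ is admissible, Cauchy--Schwarz (now in the form $(B_r^\phi)^2 \le C_r \cdot D_r^\phi \cdot$(something), or more directly the reverse inequality obtained from the same computation applied with mismatched kernels) shows the infimum is attained at $\phi=\mathrm{id}$ and equals $C_r^2/B_r$ after simplification, using $B_r^{\mathrm{id}} = B_r$ and $D_r^{\mathrm{id}} = B_r$. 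I would verify the identity $B_r = m_3^2 - m_4^2$ under \eqref{rank} by direct computation: $\int_{[0,1]^2} r^3(1-r) = \int\!\!\int \nu(x)^3\nu(y)^3 - \nu(x)^4\nu(y)^4\,\d x\,\d y = m_3^2 - m_4^2$.

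The main obstacle is not the algebra but justifying that this formal Lagrange computation actually produces the true asymptotics of $\delta_r(\eps)$, i.e.\ promoting ``leading order of the cost subject to the linearized constraint'' to a genuine $[1+\mathrm o(1)]$ statement. Two things must be controlled: (a) the error terms $\mathrm O(\|\Delta_\eps\|_2^2)$ in \eqref{constrCr} and the $\mathrm O((x-b)^3)$ correction in the expansion of $\chi$ must be shown to perturb the optimal value only at order $\mathrm o(\eps^2)$ — this is where the block-graphon machinery of the preceding subsections enters, together with Lemma~\ref{lem:costperturbation}; and (b) one must rule out that a non-balanced perturbation, or a perturbation with $\|\Delta_\eps\|_2$ not exactly $\asymp\eps$, does better — this is precisely what Corollary~\ref{cor:lessarea} (referenced but not yet stated) is designed to supply, showing the cost is minimized by a perturbation of uniform order $\eps$ on all of $[0,1]^2$. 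Granting those inputs, the lemma follows by combining the lower bound from Cauchy--Schwarz applied to the linearized constraint with the matching upper bound obtained by inserting the explicit $\Delta = \frac{C_r}{B_r}r^2(1-r)$ into \eqref{eq:costasymp} and checking it satisfies the constraint \eqref{constrCr} up to $\mathrm o(\eps)$.
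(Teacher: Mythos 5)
Your proposal is correct and follows the same structural path as the paper (reduce the norm constraint to the linear constraint $\int r\Delta = C_r$ via Lemma~\ref{lemmaexpansion}, reduce the cost to the quadratic form $\tfrac12\int\Delta^2/(r(1-r))$ via Lemma~\ref{lem:costperturbation}, then solve the constrained quadratic minimization), but it is somewhat cleaner at the final step. The paper solves the constrained minimization by a Lagrange multiplier and, instead of just writing down the critical point $\Delta = -A_r r^2(1-r)$, parametrizes a family $\Delta^\phi = -A_r\, r^\phi r(1-r)$ indexed by measure-preserving bijections, which yields the variational form \eqref{KCr*} with $\inf_\phi D_r^\phi/(B_r^\phi)^2$; it then argues \emph{after} the lemma, via a symmetrization, that this infimum equals $1/B_r$ and is attained at $\phi=\mathrm{Id}$. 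Your Cauchy--Schwarz argument collapses these two steps: applying it to $\int r\Delta = \int\bigl(r\sqrt{r(1-r)}\bigr)\cdot\bigl(\Delta/\sqrt{r(1-r)}\bigr)$ immediately gives the sharp lower bound $\int\Delta^2/(r(1-r))\ge C_r^2/B_r$ for \emph{any} admissible $\Delta$ (hence for each $\Delta^\phi$, since $\int r\Delta^\phi = C_r$ by construction), which both produces $K_r=C_r^2/(2B_r)$ and shows the paper's $\inf_\phi$ is exactly $1/B_r$ with equality iff $\Delta\propto r^2(1-r)$, i.e.\ $\phi=\mathrm{Id}$; your reconciliation sentence could be tightened along exactly these lines, but the argument is all there. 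You also correctly flag the two places where rigor is needed beyond the formal calculation — control of the $\mathrm{O}(\|\Delta_\eps\|_2^2)$ error in the constraint and the cubic error in the Taylor expansion of $\mathcal{R}$, and exclusion of non-balanced competitors — and correctly attribute these to Lemmas~\ref{lem:costperturbation0}--\ref{lem:costperturbation} and Corollary~\ref{cor:lessarea}, which is how the paper structures it too.
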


\begin{proof}
The constraint in \eqref{constrCr} becomes
\begin{equation}
\int_{[0,1]^2} r \Delta = [1+\mathrm{o}(1)]\, C_r, \qquad \epsilon \to 0,
\end{equation}
and we get
\begin{equation}
\begin{split}
\delta_r(\epsilon) &=  \inf_{\substack{\Delta\colon\,[0,1]^2 \to \R \\ r 
+ \epsilon \Delta \in \mathcal{W} \\ \int_{[0,1]^2} r \Delta= [1+\mathrm{o}(1)]\,C_r}}  I_r(r+ \epsilon \Delta) \\
&=  \inf_{\substack{\Delta\colon\,[0,1]^2 \to \R \\ r + \epsilon \Delta \in \mathcal{W} \\\int_{[0,1]^2} r \Delta 
= [1+\mathrm{o}(1)]\,C_r}}  \int_{[0,1]^2} \d x \, \d y \, \mathcal{R}\big((r+ \epsilon \Delta)(x,y) \mid r(x,y)\big).
\end{split}
\end{equation}
By Lemma \ref{lem:costperturbation} (with $\alpha=1$), we have
\begin{equation}
\label{deltarew}
\delta_r(\epsilon) = [1+\mathrm{o}(1)] \, K_r \epsilon^2, \qquad \epsilon \to 0,
\end{equation}
with
\begin{equation}
\label{varK}
K_r = \inf_{\substack{\Delta\colon\,[0,1]^2 \to \R \\ \int_{[0,1]^2} r \Delta = C_r}} 
\int_{[0,1]^2} \frac{\Delta^2}{2r(1-r)}.
\end{equation}
The prefactor $1+ \mathrm{o}(1)$ in \eqref{deltarew} arises after we scale $\Delta$ by $1+ \mathrm{o}(1)$ in order to force $\int_{[0,1]^2} r \Delta= C_r$. Note that the optimisation problem in \eqref{varK} no longer depends on $\eps$.

We can apply the method of Lagrange multipliers to solve this constrained optimisation problem. To that end we define the Lagrangian 
\begin{equation}
\mathcal{L}_{A_r}(\Delta) = \int_{[0,1]^2} \frac{\Delta^2}{2\,r(1-r)}
+ A_r \int_{[0,1]^2} r \Delta,
\end{equation}
where $A_r$ is a Langrange multiplier. Since $\int_{[0,1]^2} r = \int_{[0,1]^2} r^{\phi}$ for any Lebesgue measure-preserving bijective map $\phi \in \mathcal{M}$, we get that the minimizer (in the space of functions from $[0,1]^2\to\R$) is of the form
\begin{equation}
\label{Deltaopt}
\Delta^{\phi} = - A_r\, r^{\phi}r(1-r), \quad \phi \in \mathcal{M}.
\end{equation}
We pick $A_r$ such that the constraint is satisfied, i.e.,
\begin{equation}
-A_r B_r^{\phi} = [1+o(1)]\,C_r
\end{equation}
with
\begin{equation}
B_r^{\phi} = \int_{[0,1]^2} r^{\phi} r^2 (1-r).
\end{equation}
We get 
\begin{equation}
\label{deltaphiCr}
\Delta^{\phi} = \frac{C_r}{B_r^{\phi}}\,r^{\phi}r(1-r), \quad \phi \in \mathcal{M},
\end{equation}
and
\begin{equation}
\label{infphi}
K_r = \inf_{\phi \in \mathcal{M}} 
\int_{[0,1]^2} \frac{(\Delta^{\phi})^2}{2 r(1-r)} 
=  \frac12C_r^2 \inf_{\phi \in \mathcal{M}} \frac{D_r^{\phi}}{(B_r^{\phi})^2}
\end{equation}
with 
\begin{equation}
D_r^{\phi} = \int_{[0,1]^2} (r^{\phi})^2 r(1-r).
\end{equation}
\end{proof}

We next show that the infimum in \eqref{infphi} is uniquely attained when $\phi$ is the identity. For this we will show that $D_r^{\phi}/(B_r^{\phi})^2 \geq 1/B_r$ with equality if and only if $\phi=\mathrm{Id}$. Indeed, write
\begin{equation}
\begin{aligned}
&B_r D_r^{\phi} - (B_r^{\phi})^2\\ 
&= \int_{[0,1]^2} \d x\,\d y\,r(x,y)[1-r(x,y)] \int_{[0,1]^2} \d\bar x\,\d\bar y\,r(\bar x,\bar y)[1-r(\bar x,\bar y)]\\
&\qquad \times \Big\{r(x,y)^2\,r^\phi(\bar x,\bar y)^2 - r(x,y)r^\phi(x,y)\,r(\bar x,\bar y)r^\phi(\bar x,\bar y)\Big\}\\
&= \int_{[0,1]^2} \d x\,\d y\,r(x,y)[1-r(x,y)] \int_{[0,1]^2} \d\bar x\,\d\bar y\,r(\bar x,\bar y)[1-r(\bar x,\bar y)]\\
&\qquad \times \tfrac12\Big\{r(x,y)^2\,r^\phi(\bar x,\bar y)^2 + r^\phi(x,y)^2\,r(\bar x,\bar y)^2 
- 2r(x,y)r^\phi(x,y)\,r(\bar x,\bar y)r^\phi(\bar x,\bar y)\Big\}\\
&= \int_{[0,1]^2} \d x\,\d y\,r(x,y)[1-r(x,y)] \int_{[0,1]^2} \d\bar x\,\d\bar y\,r(\bar x,\bar y)[1-r(\bar x,\bar y)]\\
&\qquad \times \tfrac12\Big(r(x,y)r^\phi(\bar x,\bar y)-r^\phi(x,y)r(\bar x,\bar y)\Big)^2,
\end{aligned}
\end{equation}
where the second equality uses the symmetry between the integrals. Hence $B_r D_r^{\phi} - (B_r^{\phi})^2 \geq 0$, with equality if and only if $r(x,y)/r^\phi(x,y) = C$ for almost every $x,y \in [0,1]^2$. Clearly, this can hold only for $C=1$, which amounts to $\phi = \mathrm{Id}$.  

We conclude that the infimum in \eqref{infphi} equals $1/B_r$, and so we find that
\begin{equation}
K_r = \frac{C_r^2}{2 B_r}.
\end{equation}
Finally, note that $C_r=m_2$ by \eqref{Crrank}, and that $B_r = m_3^2 - m_4^2$ by \eqref{mkdef}. This settles the expression for $K_r$ in \eqref{Kr,moments}. 

\begin{corollary}{\bf [Unbalanced perturbations]}
\label{cor:lessarea}
Perturbations of order $\epsilon$ that are not balanced, i.e., that do not cover the entire unit square $[0,1]^2$, are worse than the balanced perturbation in Lemma \ref{lem:globalperturbation}.
\end{corollary}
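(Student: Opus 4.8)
The plan is to compare, for a fixed $N$, the cost $I_{r_N}(h_N)$ of an unbalanced block perturbation with that of the balanced one, and to show that removing perturbation mass from some blocks and redistributing it forces a strict increase in cost. Concretely, suppose the perturbation is supported on a measurable set $B \subsetneq [0,1]^2$ with $|B| < 1$ (at the block level: $k \geq 1$ of the $N^2$ blocks carry a perturbation of order $\mathrm{o}(\epsilon)$, the rest of order $\epsilon$). By Lemma \ref{lem:costperturbation}, the blocks with $\mathrm{o}(\epsilon)$ perturbation contribute only $\mathrm{o}(\epsilon^2)$ to the cost, so to satisfy the constraint \eqref{constrCr}, namely $\int_{[0,1]^2} r\Delta_\epsilon = [1+\mathrm{o}(1)]\,C_r\epsilon$, the order-$\epsilon$ mass must all be carried by $B$. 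Writing $\Delta_\epsilon = \epsilon\Delta$ on $B$ and $\Delta_\epsilon = \mathrm{o}(\epsilon)$ off $B$, the constraint becomes $\int_B r\Delta = [1+\mathrm{o}(1)]\,C_r$ and the cost is $[1+\mathrm{o}(1)]\,\epsilon^2 \int_B \frac{\Delta^2}{2r(1-r)}$ by Lemma \ref{lem:costperturbation}.

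The next step is to solve the restricted variational problem $\inf_{\Delta} \int_B \frac{\Delta^2}{2r(1-r)}$ subject to $\int_B r\Delta = C_r$, exactly as in the proof of Lemma \ref{lem:globalperturbation} but with all integrals restricted to $B$. Lagrange multipliers give the minimiser $\Delta_B = \frac{C_r}{B_r(B)}\,r(1-r)$ on $B$, where $B_r(B) = \int_B r^2(1-r)$, with minimal cost $\frac{C_r^2}{2}\,\frac{D_r(B)}{B_r(B)^2}$ and $D_r(B) = \int_B r^2(1-r)$ — wait, here $D_r(B) = \int_B r(1-r)\cdot r^2$ in the identity-$\phi$ case collapses, so more carefully the restricted cost is $\frac{C_r^2}{2 B_r(B)}$ with $B_r(B) = \int_B r^3(1-r)$. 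Since $r^3(1-r) > 0$ everywhere and $|B| < 1 = |[0,1]^2|$, we have $B_r(B) < B_r(\,[0,1]^2) = B_r$ strictly, hence $\frac{C_r^2}{2B_r(B)} > \frac{C_r^2}{2B_r} = K_r$. Thus the unbalanced perturbation costs $[1+\mathrm{o}(1)]\,\tfrac{C_r^2}{2B_r(B)}\,\epsilon^2$, which exceeds $[1+\mathrm{o}(1)]\,K_r\epsilon^2$ by a factor bounded away from $1$, establishing the claim. One should also rule out the case where $B$ itself has full measure but the perturbation is merely non-uniform: this is already covered, since the variational computation in Lemma \ref{lem:globalperturbation} shows the optimiser is $\Delta = \frac{C_r}{B_r}r(1-r)$ and any other profile has strictly larger cost by strict convexity of $\Delta \mapsto \int \frac{\Delta^2}{2r(1-r)}$.

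The main obstacle I anticipate is the interchange of limits: one is taking $N \to \infty$ (block approximation) and $\epsilon \to 0$ simultaneously, and the $\mathrm{o}(\epsilon)$ and $[1+\mathrm{o}(1)]$ error terms must be shown to be uniform enough that the block-level comparison passes to the continuum limit via Lemma \ref{lem:convergencecost}. I would handle this by first fixing the qualitative feature (support set $B$, or non-uniformity) at the continuum level, then noting that Lemmas \ref{lem:costperturbation0}, \ref{lem:costperturbation} and \ref{lem:convergencecost} give the needed asymptotics directly in the continuum without recourse to a joint limit, so that the block graphons served only as motivation for why optimal $\Delta_\epsilon$ has the form $\epsilon\Delta$; the actual comparison of costs is then a clean finite computation with the two explicit minimisers. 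A secondary point to be careful about is the degenerate possibility $C_r = 0$, which is excluded by \eqref{Assbasic} since $r \geq \eta > 0$ forces $C_r = \|T_r\| \geq \eta > 0$.
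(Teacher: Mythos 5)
Your argument is correct and follows essentially the same route as the paper: restrict the Lagrange computation of Lemma~\ref{lem:globalperturbation} to the support set $B$, compute the restricted optimal cost $\tfrac{C_r^2}{2\int_B r^3(1-r)}$, and conclude strict inequality from $\int_B r^3(1-r) < \int_{[0,1]^2} r^3(1-r) = B_r$ since $r^3(1-r)$ is strictly positive everywhere by \eqref{Assbasic} --- which is exactly the paper's remark that $\Delta^{\mathrm{Id}}$ is non-zero everywhere. One small slip: the restricted minimiser should read $\Delta_B = \tfrac{C_r}{\int_B r^3(1-r)}\, r^2(1-r)$ rather than $r(1-r)$, consistent with \eqref{deltaphiCr} at $\phi=\mathrm{Id}$; you correct the resulting cost in the next line, so the conclusion stands.
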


\begin{proof}
The argument of the variational formula can be reduced to an integral that considers only those regions that contribute order $\epsilon^2$, which constitute a subset $A \subset [0,1]^2$. Applying the method of Lagrange multipliers as in Lemma \ref{lem:globalperturbation}, we obtain that the solution is given by 
\begin{equation}
\delta_r(\epsilon) = [1+\mathrm{o}(1)]\,K'_r \epsilon^2, \qquad \epsilon \to 0,
\end{equation} 
with $K'_r > K_r$. The strict inequality comes from the fact that the optimal balanced perturbation $\Delta^\mathrm{Id}$ found in \eqref{Deltaopt} is non-zero everywhere.   
\end{proof}

In conclusion, we have shown that a balanced perturbation is optimal and we have identified in \eqref{deltaphiCr} the form of the optimal balanced perturbation. Lemma \ref{lem:globalperturbation} settles the claim in Theorem~\ref{thm:rfscaling}(i), while \eqref{deltaphiCr} settles the claim in Theorem~\ref{thm:minscaling}(i). 


\subsection{Perturbation near the right end}
\label{sec:pert1}

Take $\beta = 1 - \epsilon$ and consider a graphon of the form $h = 1 - \Delta_{\epsilon}$, where $\Delta_{\epsilon} \colon\, [0,1]^2 \to [0,\infty)$ represents a symmetric and bounded perturbation of the constant graphon $h \equiv 1$. We compare 
\begin{equation}
\psi_r(1 - \epsilon) = \inf_{\substack{\Delta_{\epsilon} \colon\, [0,1]^2 \to [0,\infty) \\ 1- \Delta_{\epsilon} \in \mathcal{W} \\ \|T_{1 - \Delta_{\epsilon}} \|= 1- \epsilon}} 
I_r(1 - \Delta_{\epsilon})
\end{equation}
with 
\begin{equation}
C^1_r = I_r(1)
\end{equation}
by computing the difference
\begin{equation}
\delta_r(\epsilon) =  \psi_r(1) - \psi_r(1 - \epsilon)
\end{equation}
and studying its behaviour as $\epsilon \downarrow 0$. Since $I_r(1)$ is a constant, we can write
\begin{equation} 
\label{variationbetalarge}
\delta_r(\epsilon) = \sup_{\substack{\Delta_{\epsilon} \colon\, [0,1]^2 \to [0,\infty) \\  1- \Delta_{\epsilon} \in \mathcal{W} \\ \|T_{1 - \Delta_{\epsilon}} \|= 1- \epsilon}} 
 [I_r(1)-I_r(1- \Delta_{\epsilon})].
\end{equation}
We again use the expansion in Lemma~\ref{lemmaexpansion}. Pick $\bar{h}=1$ and $h=1- \Delta_{\epsilon}$ in \eqref{expansion}, to get 
\begin{equation}
\|T_{1-\Delta_{\epsilon}}\| = 1 + \sum_{n \in \N} \frac{1}{\|T_{1-\Delta_{\epsilon}}\|^n}\, 
\mathcal{F}_n(1-\Delta_{\epsilon}, 1).
\end{equation}
Since $\|T_{1-\Delta_{\eps}}\|=1-\eps$, this gives
\begin{equation}
1-\eps= 1+ \frac{\la 1, (-\Delta_\eps) 1 \ra}{1-\eps}+ \frac{\la 1, (-\Delta_\eps)^2 1\ra}{(1-\eps)^2}
+\sum_{n\in\N\setminus\{1,2\}} \frac{\la 1, (-\Delta_{\eps})^{n}1\ra}{(1-\eps)^n}.
\end{equation}
For $\epsilon \downarrow 0$ we have $\|\Delta_{\epsilon}\|_2 \downarrow 0$ and $|\la 1, (-\Delta_{\eps})^n 1\ra | =\mathrm{O}(\|\Delta_{\epsilon}\|^n_2)$. Therefore
\begin{equation}
\label{eq:exp}
\eps(1-\eps)= \int_{[0,1]^2}\Delta_{\eps}- \frac{\la 1, \Delta_\eps^2 1\ra}{(1-\eps)} 
+ \mathrm{O}\left( \|\Delta_\eps\|_2^3\right).
\end{equation}
The restriction $1-\Delta_\eps\in \mathcal W$ implies that $\Delta_{\eps}\in [0,1]$. Hence $\|\Delta_{\eps}\|_2^2 \leq \|\Delta_\eps\|_1$. Moreover,
\begin{equation}
1-\eps= \|T_{1-\Delta_{\eps}}\|\le \|1-\Delta_{\eps}\|_2\le \sqrt{ \|1-\Delta_\eps\|_1}.
\end{equation}
Since $\|1-\Delta_\eps\|_1= 1- \|\Delta_\eps\|_1$, we have
\begin{equation}
\label{eq:deltaupper}
 \|\Delta_{\eps}\|_1\le 1- (1- \eps)^2 = \eps(2-\eps).
 \end{equation}
Since $\|\Delta_\eps\|_2^3 =\mathrm{O}(\eps^{3/2})$, \eqref{eq:exp} reads
\begin{equation}
\frac{1}{1-\eps} \int_{[0,1]^3} \d x\,\d y \,\d z \,\Delta_\eps(x,y)[1-\Delta_\eps(y,z)]
- \frac{\eps}{1-\eps}\,\|\Delta_\eps\|_1 = \eps(1-\eps)+ \mathrm{O}(\eps^{3/2}),
\end{equation}
which, because $\|\Delta_\eps\|_1= \mathrm{O}(\eps)$, further reduces to
\begin{equation}
\int_{[0,1]^3} \d x\, \d y\, \d z\, \Delta_\eps(x,y)[1-\Delta_\eps(y,z)]= \eps\,[1+ \mathrm{O}(\eps^{1/2})].
\end{equation}
Note that when $\Delta_\eps=\eps \Delta$, the constraint reads
\begin{equation}
\label{constr1}
\int_{[0,1]^2} \Delta = 1+\mathrm{O}(\eps^{1/2}),
\qquad \epsilon \downarrow 0.
\end{equation}

The following lemma gives an upper bound for $I_r(1)- I_r(1-\Delta_{\eps})$.

\begin{lemma}{\bf [Order of minimal cost]}
\label{upperbdd:energy}
Let $\Delta_{\eps}: [0,1]^2\to [0,1]$ be such that $1-\Delta_{\eps}\in \mathcal W$ and $\|T_{1-\Delta_{\eps}}\|=1-\eps$. Then, for $\eps$ small enough,
\begin{equation}
I_r(1)- I_r(1- \Delta_{\eps}) \leq \|\Delta_{\eps}\|_1\log \frac{1}{\|\Delta_\eps\|_1} + \mathrm{O}(\|\Delta_\eps\|_1).
\end{equation}
Moreover, $\delta_r(\eps) \leq  \eps\log\frac{1}{\eps}+\mathrm{O}(\eps)$.
\end{lemma}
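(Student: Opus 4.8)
The plan is to compute the entropy difference $I_r(1)-I_r(1-\Delta_{\eps})$ exactly and then estimate it term by term. Using $\mathcal{R}(1\mid r)=\log\frac1r$ and expanding $\mathcal{R}(1-\Delta_{\eps}\mid r)$, a short computation gives
\[
I_r(1)-I_r(1-\Delta_{\eps})=\int_{[0,1]^2}\Delta_{\eps}\,\log\frac{1-r}{r}+\int_{[0,1]^2}H(\Delta_{\eps}),
\qquad H(t):=-t\log t-(1-t)\log(1-t).
\]
By \eqref{Assbasic} the integrand $\log\frac{1-r}{r}$ is bounded, so the first integral is $\mathrm{O}(\|\Delta_{\eps}\|_1)$. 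For the second, the elementary inequality $-(1-t)\log(1-t)\le t$ on $[0,1]$ (the map $t\mapsto t+(1-t)\log(1-t)$ vanishes at $0$ and has derivative $-\log(1-t)\ge0$) shows $\int_{[0,1]^2}H(\Delta_{\eps})\le\int_{[0,1]^2}(-\Delta_{\eps}\log\Delta_{\eps})+\|\Delta_{\eps}\|_1$, and Jensen's inequality for the concave map $t\mapsto-t\log t$ on the probability space $[0,1]^2$ gives $\int_{[0,1]^2}(-\Delta_{\eps}\log\Delta_{\eps})\le\|\Delta_{\eps}\|_1\log\frac1{\|\Delta_{\eps}\|_1}$. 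Since $\|\Delta_{\eps}\|_1\le\eps(2-\eps)$ by \eqref{eq:deltaupper}, combining these three estimates yields the first assertion.

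For the ``moreover'' part, inserting the crude bound $\|\Delta_{\eps}\|_1\le\eps(2-\eps)$ into the first assertion only gives $\delta_r(\eps)\le 2\eps\log\frac1\eps+\mathrm{O}(\eps)$, which has the wrong leading constant: perturbations with $\|\Delta_{\eps}\|_1$ comparable to $2\eps$ are necessarily spread out, and spread-out perturbations are cheap. To capture this, set $d(y):=\int_{[0,1]}\Delta_{\eps}(x,y)\,\d x\in[0,1]$; by symmetry of $\Delta_{\eps}$ the operator-norm constraint derived above becomes $\int_{[0,1]}d(y)[1-d(y)]\,\d y=\eps[1+\mathrm{O}(\eps^{1/2})]=:s$. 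Applying Jensen row by row gives $\int_{[0,1]^2}(-\Delta_{\eps}\log\Delta_{\eps})\le\int_{[0,1]}(-d\log d)$, and it remains to estimate this one-variable integral using only the value of $s$.

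This is achieved via the pointwise inequality
\[
-t\log t\le\Bigl(\log\tfrac1s+1\Bigr)\,t(1-t)+s,\qquad t\in[0,1],
\]
valid for every sufficiently small $s$. Since $t\mapsto\frac{\log(1/t)}{1-t}$ decreases from $+\infty$ to $1$, there is $t_s\asymp s$ such that this quotient is $\le\log\frac1s+1$ exactly on $[t_s,1]$; on that interval the bound holds even without the summand $s$, while for $t<t_s$ one checks, writing $-t\log t-(\log\frac1s+1)t(1-t)=t(\log\frac st-1)+(\log\frac1s+1)t^2$, that the right-hand side is at most $\tfrac{s}{\mathrm{e}^2}+\mathrm{o}(s)\le s$. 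Integrating this inequality against $d$ and using the constraint gives $\int_{[0,1]}(-d\log d)\le(\log\frac1s+1)s+s=s\log\frac1s+2s=\eps\log\frac1\eps+\mathrm{O}(\eps)$, the final step using $s=\eps[1+\mathrm{O}(\eps^{1/2})]$ and $\eps^{1/2}\log\frac1\eps\to0$. Combining with the bounds of the first paragraph and taking the supremum over admissible $\Delta_{\eps}$ (recall \eqref{variationbetalarge}) gives $\delta_r(\eps)\le\eps\log\frac1\eps+\mathrm{O}(\eps)$.

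The exact entropy identity and the two elementary inequalities are routine. The step that requires care is the pointwise estimate on $[0,1]$, in particular its validity in a neighbourhood of $t=0$: there both the additive $s$ and the fact that the constraint bounds $\int_{[0,1]}d(1-d)$ (not merely $\|\Delta_{\eps}\|_1$) are essential, and this is exactly where the factor-$2$ loss of the naive argument is recovered.
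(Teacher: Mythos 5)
Your argument for the first inequality follows the paper's route: exact entropy difference, then a Jensen estimate to reach $\|\Delta_\eps\|_1\log(1/\|\Delta_\eps\|_1)$. The difference is cosmetic (the paper tilts by the density proportional to $1-\Delta_\eps$ and applies Jensen to $u\mapsto u\log(1/u)$; you use Lebesgue measure together with the elementary bound $-(1-t)\log(1-t)\le t$), and both versions are correct. For the ``moreover'' part you depart from the paper and, in my reading, repair a genuine gap. The paper concludes with the one-line remark ``since $\|\Delta_\eps\|_1=\mathrm{O}(\eps)$'', but the only a~priori control available at that point is $\|\Delta_\eps\|_1\le\eps(2-\eps)$ from \eqref{eq:deltaupper}, and since $x\mapsto x\log(1/x)$ is increasing near the origin this gives only $2\eps\log(1/\eps)+\mathrm{O}(\eps)$, a factor $2$ worse than the lemma asserts. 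Your fix passes to the row marginal $d(y)=\int_0^1\Delta_\eps(x,y)\,\d x$, uses the symmetry of $\Delta_\eps$ to recast the constraint coming from Lemma~\ref{lemmaexpansion} as $\int_0^1 d(1-d)=\eps[1+\mathrm{O}(\eps^{1/2})]$, reduces $\int(-\Delta_\eps\log\Delta_\eps)$ to $\int_0^1(-d\log d)$ via a fibrewise Jensen, and finishes with the pointwise inequality $-t\log t\le(\log\tfrac{1}{s}+1)\,t(1-t)+s$. I checked this last inequality: $t\mapsto\log(1/t)/(1-t)$ is decreasing because $\log u<u-1$ for $u>1$, and on $[0,t_s]$ with $t_s\asymp s$ the excess $t(\log\tfrac{s}{t}-1)+(\log\tfrac{1}{s}+1)t^2$ is at most $s/\e^2+\mathrm{o}(s)<s$. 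Your bookkeeping of the error terms is also correct, so you do obtain the sharp constant $1$ that the lemma states, which the paper's proof as written does not establish.
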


\begin{proof}
Abbreviate (recall \eqref{Rdef})
\begin{equation}
\chi(a) = \mathcal{R}(a \mid r) = a \log \frac{a}{r} + (1-a) \log \frac{1-a}{1-r}, \quad a \in [0,1].
\end{equation}
Then
\begin{equation}
\begin{aligned}
&\chi(1)- \chi(1-\Delta_\eps(x,y))\\
&= \Delta_{\eps}(x,y)\log\left(\frac{1-\Delta_\eps(x,y)}{\Delta_{\eps}(x,y)}\frac{1-r(x,y)}{r(x,y)}\right)
- \log (1-\Delta_\eps(x,y)),
\end{aligned}
\end{equation}
and so
\begin{equation}
\label{form2}
I_r(1)- I_r(1-\Delta_\eps)
= \int_{[0,1]^2} \left[\Delta_{\eps}\log\left(\frac{1-\Delta_\eps}{\Delta_{\eps}}\frac{1-r}{r}\right)
- \log (1-\Delta_\eps)\right].
\end{equation}

Let $\mu_\eps$ be the probability measure on $[0,1]^2$ whose density with respect to the Lebesgue measure is $Z_{\eps}^{-1}(1-\Delta_{\eps}(x,y))$, where $Z_\eps = \int_{[0,1]^2} (1-\Delta_{\eps}) = 1 - \mathrm{O}(\eps)$. Since $u \mapsto \bar s(u)= u\log(1/u)$ is strictly concave, by Jensen's inequality we have
\begin{equation}
\int_{[0,1]^2}  \Delta_{\eps}\log\left(\frac{1-\Delta_\eps}{\Delta_{\eps}}\right)
= Z_{\eps}\int_{[0,1]^2} \mu_{\eps}\bar{s}\left(\frac{\Delta_{\eps}}{1-\Delta_{\eps}}\right)
\le Z_{\eps}\bar{s}\left(Z_{\eps}^{-1}\|\Delta_\eps\|_1\right)
= \|\Delta_{\eps}\|_1\log \left(\frac{Z_{\eps}}{\|\Delta_{\eps}\|_1}\right).
\end{equation}
Moreover,
\begin{equation}
\int_{[0,1]^2} \Delta_{\eps}\log\left(\frac{1-r}{r}\right) = \mathrm{O}(\|\Delta_{\eps}\|_1),
\qquad - \int_{[0,1]^2} \log (1-\Delta_\eps) = \mathrm{O}(\|\Delta_{\eps}\|_1).
\end{equation}
Hence
\begin{equation}
I_r(1)- I_r(1-\Delta_{\eps}) \leq  \|\Delta_{\eps}\|_1\log \frac{1}{\|\Delta_{\eps}\|_1} 
+ \mathrm{O}(\|\Delta_{\eps}\|_1), \qquad \eps \downarrow 0,
\end{equation}
and since $\|\Delta_{\eps}\|_1 = \mathrm{O}(\eps)$ also $\delta_r(\eps) \leq \eps \log\frac{1}{\eps}+ \mathrm{O}(\eps)$.
\end{proof}

The following is the analogue of Lemma \ref{lem:costperturbation} for perturbations near the right end. 

\begin{lemma}{\bf [Cost of small perturbations]}
\label{lem:costperturbation1}
Let $B \subseteq [0,1]^2$ be a measurable region of area $|B|$. Suppose that $\Delta_{\epsilon} = \epsilon^\alpha\Delta$ on $B$ with $\epsilon > 0$, $\alpha\in (0,1]$ and $\Delta\colon\,[0,1]^2 \to [0,\infty)$. Then the contribution of $B$ to the cost $I_r(h)$ is 
\begin{equation}
\int_B \d x\, \d y\, \Big[\mathcal{R}(1 \mid r(x,y)) - \mathcal{R}(1-\eps^{\alpha} \Delta(x,y) \mid r(x,y))\Big]  
= [1+\mathrm{o}(1)] \int_B  \epsilon^\alpha \Delta\log\left(\frac{1-r}{\epsilon^{\alpha}\Delta r}\right), \quad \epsilon \downarrow 0.
\end{equation}
\end{lemma}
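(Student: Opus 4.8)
The plan is to mimic the proof of Lemma \ref{lem:costperturbation0}, but now expanding the relative entropy $\mathcal{R}(\cdot \mid r(x,y))$ near the endpoint $a=1$ rather than near the interior point $a=r$. The key difference is that $\mathcal{R}(a\mid b)$ is not twice differentiable at $a=1$: the dominant behaviour there is logarithmic, namely $\mathcal{R}(1\mid b)-\mathcal{R}(1-t\mid b) = t\log\frac{1-b}{t\,b} + t + \mathrm{O}(t^2\log\frac1t)$ as $t\downarrow 0$, uniformly for $b$ bounded away from $0$ and $1$. First I would record this pointwise expansion (using $\eta\le r\le 1-\eta$ from \eqref{Assbasic}), setting $t=\epsilon^\alpha\Delta(x,y)$ and noting that $t\downarrow 0$ uniformly on $B$ because $\Delta$ is bounded (it takes values in $[0,\infty)$ but the constraint $1-\Delta_\epsilon\in\mathcal{W}$ forces $\Delta_\epsilon\le 1$; in any case on a fixed region the relevant $\Delta$ is essentially bounded, and where $\Delta(x,y)=0$ the integrand is exactly $0$).

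Next I would integrate over $B$. The leading term is $\int_B \epsilon^\alpha\Delta(x,y)\log\!\big(\tfrac{1-r(x,y)}{\epsilon^\alpha\Delta(x,y)\,r(x,y)}\big)\,\d x\,\d y$, which can be split as $\epsilon^\alpha\log\tfrac1{\epsilon^\alpha}\int_B\Delta\big(1+\mathrm{o}(1)\big)$ plus a term of order $\epsilon^\alpha$ coming from $\int_B\epsilon^\alpha\Delta\log\tfrac{1-r}{\Delta r}$, while the linear term $\int_B t = \epsilon^\alpha\int_B\Delta$ and the remainder $\int_B\mathrm{O}(t^2\log\tfrac1t)$ are both of order $\epsilon^\alpha$ (hence $\mathrm{o}(\epsilon^\alpha\log\tfrac1\epsilon)$). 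Collecting, the whole contribution is $[1+\mathrm{o}(1)]\int_B\epsilon^\alpha\Delta\log\big(\tfrac{1-r}{\epsilon^\alpha\Delta r}\big)$, which is the claimed formula; the point is simply that the $\log\tfrac1{\epsilon^\alpha}$ factor dominates the $\mathrm{O}(1)$ corrections, so all lower-order pieces get absorbed into the $[1+\mathrm{o}(1)]$ prefactor.

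The one point requiring a little care — and the main (mild) obstacle — is the uniformity of the expansion of $\mathcal{R}(1\mid b)-\mathcal{R}(1-t\mid b)$ as $t\downarrow 0$: one must check that the error term is $\mathrm{O}(t^2\log\tfrac1t)$ with a constant independent of $b\in[\eta,1-\eta]$, and that on the set where $\Delta$ is large the expansion is still valid (or that this set is handled separately — but since $\epsilon^\alpha\Delta\to 0$ only where $\Delta$ is bounded, and the region where $\Delta$ would be unbounded can be excluded or contributes negligibly under the $L^2$ normalisation carried from Lemma \ref{lem:costperturbation0}). A clean way to phrase this is to write $\mathcal{R}(1\mid b)-\mathcal{R}(1-t\mid b) = t\log\tfrac{1-b}{t b} + t + t^2\rho(t,b)$ with $\rho$ bounded by $C\log\tfrac1t$ on $t\in(0,\tfrac12]$, $b\in[\eta,1-\eta]$, then dominate the integral of the remainder using $\int_B t^2\log\tfrac1t\le \epsilon^{2\alpha}\log\tfrac1{\epsilon^\alpha}\int_B\Delta^2 = \mathrm{O}(\epsilon^{\alpha})$. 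Everything else is a routine splitting of logarithms, and the final bookkeeping matches the statement exactly.
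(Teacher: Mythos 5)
Your proposal is correct and follows essentially the same route as the paper. The paper's proof cites its own Lemma \ref{upperbdd:energy}, which establishes the exact identity $\chi(1)-\chi(1-t)= t\log\bigl(\tfrac{1-t}{t}\,\tfrac{1-r}{r}\bigr)-\log(1-t)$, and then simply asserts the pointwise asymptotic $\mathcal{R}(1\mid r)-\mathcal{R}(1-\epsilon^\alpha\Delta\mid r)=[1+\mathrm{o}(1)]\,\epsilon^\alpha\Delta\log\bigl(\tfrac{1-r}{\epsilon^\alpha\Delta r}\bigr)$, which is precisely what you derive by Taylor-expanding near the endpoint $a=1$; your additional bookkeeping (the $+t$ term and the $\mathrm{O}(t^2\log\tfrac1t)$ remainder, both absorbed because $\log\tfrac1{\epsilon^\alpha}$ dominates) is a more explicit version of the same calculation. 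Two minor inaccuracies, neither of which affects the conclusion: the exact remainder is $\mathrm{O}(t^2)$, not $\mathrm{O}(t^2\log\tfrac1t)$, and your intermediate bound $\int_B t^2\log\tfrac1t\le\epsilon^{2\alpha}\log\tfrac1{\epsilon^\alpha}\int_B\Delta^2$ is only valid where $\Delta\ge1$ (on the complementary set use $t^2\log\tfrac1t\le t/e$ instead); the final estimate $\mathrm{o}(\epsilon^\alpha\log\tfrac1\epsilon)$ holds either way.
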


\begin{proof} 
The proof is the same as that of Lemma \ref{upperbdd:energy}, with the observation that
\begin{equation}
\mathcal{R}(1 \mid r) - \mathcal{R}(1- \epsilon^{\alpha}\Delta \mid r)  
= [1+\mathrm{o}(1)]\, \epsilon^{\alpha} \Delta \log\left(\frac{1-r}{\epsilon^{\alpha}\Delta r}\right), \qquad \epsilon \downarrow 0.
\end{equation}
\end{proof}

Following the argument in Section \ref{pertCr}, we can approximate the cost function by using block graphons. We see
\begin{equation}
\label{blockconstraint1}
\sum_{i,j = 1}^N \int_{B_{i,j}} \d x \, \d y \, \Delta_{\epsilon, N}(x,y) 
= \sum_{i,j = 1}^N \frac{1}{N^2}  \overline{\Delta_{\epsilon}}_{N, ij} 
= \epsilon\,[1+\mathrm{o}(1)], \qquad \epsilon \downarrow 0.
\end{equation}
The block constraint in \eqref{blockconstraint1} implies that the sum over each block must be of order $\epsilon$. Hence
\begin{equation}
\overline{\Delta_{\epsilon}}_{N, ij} = \mathrm{O}(\epsilon), \qquad \epsilon \downarrow 0 \qquad \forall\, (i,j).
\end{equation}
There are two cases to distinguish: all blocks contribute to the constraint with a term of order $\epsilon$ (balanced perturbation), or some of the blocks contribute to the constraint with a term of order $\epsilon$ and some with $\mathrm{o}(\epsilon)$. Analogously to the analysis in Section~\ref{pertCr}, by using Lemma \ref{lem:costperturbation1} we can compute the total cost that different types of block perturbations produce. This again shows that the optimal perturbations are the balanced perturbations, consisting of perturbations of order $\epsilon$ on every block. As $N \to \infty$, a sequence of such perturbations converges to a perturbation $\Delta_{\epsilon} = \epsilon \Delta$ with $\Delta\colon\, [0,1]^2 \to [0,\infty)$, which we analyse next.

\begin{lemma}{\bf [Balanced perturbations]}
\label{lem:globalperturbation1}
Suppose that $\Delta_{\epsilon}=\epsilon \Delta$ with $\Delta\colon\,[0,1]^2 \to [0,\infty)$. Then
\begin{equation}
\label{K_1}
\delta_r(\epsilon) = [1+\mathrm{O}(\eps^{1/2})]\,\left\{\eps + \epsilon\log\left(\frac{N^1_r}{\epsilon}\right)\right\}
+ \mathrm{O}(\eps^2), \qquad \epsilon \downarrow 0.
\end{equation}
\end{lemma}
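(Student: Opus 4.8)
The plan is to combine the constraint \eqref{constr1} with the cost computation from Lemma~\ref{lem:costperturbation1} (applied with $\alpha=1$ on the whole square $B=[0,1]^2$), and then optimise over $\Delta$ via Lagrange multipliers, exactly paralleling the structure of Lemma~\ref{lem:globalperturbation}. First I would record that, for $\Delta_\epsilon=\epsilon\Delta$, Lemma~\ref{lem:costperturbation1} gives
\begin{equation}
\delta_r(\epsilon) = [1+\mathrm{o}(1)]\int_{[0,1]^2}\epsilon\,\Delta\,\log\!\left(\frac{1-r}{\epsilon\,\Delta\,r}\right),\qquad\epsilon\downarrow 0,
\end{equation}
so after pulling out the $\epsilon$ and the $\log(1/\epsilon)$ factor this becomes
\begin{equation}
\delta_r(\epsilon) = [1+\mathrm{o}(1)]\,\epsilon\left\{\log\frac1\epsilon\int_{[0,1]^2}\Delta + \int_{[0,1]^2}\Delta\log\!\left(\frac{1-r}{\Delta\,r}\right)\right\}.
\end{equation}
The constraint \eqref{constr1} says $\int_{[0,1]^2}\Delta = 1+\mathrm{O}(\epsilon^{1/2})$, which turns the first term into $\epsilon\log(1/\epsilon)\,[1+\mathrm{O}(\epsilon^{1/2})]$ and lets me replace the optimisation problem by: maximise $\int_{[0,1]^2}\Delta\log\!\big(\tfrac{1-r}{\Delta r}\big)$ over $\Delta\colon[0,1]^2\to[0,\infty)$ subject to $\int_{[0,1]^2}\Delta=1$ (the $\mathrm{O}(\epsilon^{1/2})$ discrepancy in the constraint will be absorbed into the $[1+\mathrm{O}(\epsilon^{1/2})]$ prefactor after rescaling $\Delta$, just as in Lemma~\ref{lem:globalperturbation}).

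Next I would solve this fixed constrained optimisation. Writing the Lagrangian
\begin{equation}
\mathcal{L}(\Delta) = \int_{[0,1]^2}\Delta\log\!\left(\frac{1-r}{\Delta\,r}\right) - A\left(\int_{[0,1]^2}\Delta - 1\right),
\end{equation}
the stationarity condition $\log\!\big(\tfrac{1-r}{\Delta r}\big) - 1 - A = 0$ gives $\Delta(x,y) = \mathrm{e}^{-1-A}\,\tfrac{1-r(x,y)}{r(x,y)}$, i.e. $\Delta$ is proportional to $(1-r)/r$; the normalisation $\int_{[0,1]^2}\Delta=1$ then forces the constant to be $1/N^1_r$, so $\Delta = \tfrac{1}{N^1_r}\tfrac{1-r}{r}$, which is exactly \eqref{scal7}. (Strict concavity of $u\mapsto u\log(1/u)$, or equivalently of the objective in $\Delta$, ensures this critical point is the unique maximiser.) Plugging this back in, $\int\Delta\log\!\big(\tfrac{1-r}{\Delta r}\big) = \int\Delta\log(N^1_r) = \log N^1_r$, so
\begin{equation}
\delta_r(\epsilon) = [1+\mathrm{O}(\epsilon^{1/2})]\,\epsilon\left\{\log\frac1\epsilon + \log N^1_r\right\} + \mathrm{O}(\epsilon^2) = [1+\mathrm{O}(\epsilon^{1/2})]\left\{\epsilon + \epsilon\log\!\left(\frac{N^1_r}{\epsilon}\right)\right\} + \mathrm{O}(\epsilon^2),
\end{equation}
where the bare $+\epsilon$ term comes from collecting the lower-order contributions flagged in Lemma~\ref{upperbdd:energy} (the $-\log(1-\Delta_\epsilon)$ piece and the $\Delta_\epsilon\log\tfrac{1-r}{r}$ piece both contribute at order $\mathrm{O}(\epsilon)$, and a careful accounting pins down the coefficient $1$); this is precisely \eqref{K_1}.

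I expect the main obstacle to be the bookkeeping of error terms rather than the optimisation itself: one must check that restricting to $\Delta_\epsilon=\epsilon\Delta$ on all of $[0,1]^2$ (balanced perturbations) is genuinely optimal — this is where the block-graphon analysis and the analogue of Corollary~\ref{cor:lessarea} are invoked, using that the maximiser $\Delta = \tfrac{1}{N^1_r}\tfrac{1-r}{r}$ is strictly positive everywhere, so shrinking the support of the $\epsilon$-order perturbation strictly lowers $\int_{[0,1]^2}\Delta\log\!\big(\tfrac{1-r}{\Delta r}\big)$ and hence $\delta_r(\epsilon)$ — and that the $\mathrm{O}(\epsilon^{1/2})$ slack in the constraint \eqref{constr1}, together with the higher-order terms in the expansion \eqref{eq:exp}, does not corrupt the stated relative error $[1+\mathrm{O}(\epsilon^{1/2})]$. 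The upper bound $\delta_r(\epsilon)\le\epsilon\log\tfrac1\epsilon+\mathrm{O}(\epsilon)$ from Lemma~\ref{upperbdd:energy} provides the matching ceiling that confirms no perturbation does better than the balanced one, so the two bounds together give \eqref{K_1}.
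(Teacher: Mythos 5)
Your overall strategy — reduce to the constrained optimisation, solve it by Lagrange multipliers, and check that the balanced perturbation is optimal via the block-graphon analysis and an upper-bound sandwich — matches the paper's. There is, however, a genuine gap in how you produce the bare $+\epsilon$ term in \eqref{K_1}. You take Lemma~\ref{lem:costperturbation1} as your starting point, i.e.\ $\delta_r(\epsilon)=[1+\mathrm{o}(1)]\int\epsilon\Delta\log\bigl(\tfrac{1-r}{\epsilon\Delta r}\bigr)$, and then try to recover the missing $+\epsilon$ afterwards ``by collecting lower-order contributions flagged in Lemma~\ref{upperbdd:energy}.'' But the multiplicative error $[1+\mathrm{o}(1)]$ in Lemma~\ref{lem:costperturbation1} sits at scale $\epsilon\log(1/\epsilon)$, so it silently absorbs any term of size $\mathrm{O}(\epsilon)$ — including the one you are trying to pin to coefficient $1$ — and Lemma~\ref{upperbdd:energy} only gives an upper bound, not an exact expansion, so it cannot recover that coefficient. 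Your attribution of the extra $\epsilon$ partly to the $\Delta_\epsilon\log\tfrac{1-r}{r}$ piece is also misplaced: that piece already lives inside the main integral and contributes to $\epsilon\log N^1_r$, not to the bare $+\epsilon$.

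The fix is straightforward and is exactly what the paper does: work directly with the exact identity \eqref{form2}, rewriting
\begin{equation}
I_r(1)-I_r(1-\epsilon\Delta)
=\int_{[0,1]^2}\Bigl[\epsilon\Delta\log\Bigl(\tfrac{1-r}{\epsilon\Delta r}\Bigr)-(1-\epsilon\Delta)\log(1-\epsilon\Delta)\Bigr],
\end{equation}
and note that $-(1-\epsilon\Delta)\log(1-\epsilon\Delta)=\epsilon\Delta+\mathrm{O}(\epsilon^2\Delta^2)$ pointwise, so integrating over $[0,1]^2$ with $\int\Delta=1+\mathrm{O}(\epsilon^{1/2})$ yields the $+\epsilon$ term exactly, with the stated $[1+\mathrm{O}(\epsilon^{1/2})]$ relative error and an additive $\mathrm{O}(\epsilon^2)$. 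From there your Lagrange-multiplier computation of the maximiser $\Delta=\tfrac{1}{N^1_r}\tfrac{1-r}{r}$, the evaluation $\int\Delta\log\bigl(\tfrac{1-r}{\Delta r}\bigr)=\log N^1_r$ (by a Jensen argument verifying the critical point is the maximiser), and the concluding appeal to Lemma~\ref{upperbdd:energy} as a matching ceiling are all correct and in line with the paper's argument. One minor point of comparison: the paper parametrises a family of candidate optimisers $\Delta^\phi$ by measure-preserving maps $\phi$ and invokes Jensen to show $\phi=\mathrm{Id}$ is optimal, whereas you treat $\Delta$ as a free function on $[0,1]^2$ and get the unique critical point directly; since the second infimum in \eqref{rf} is over $h\in\mathcal{W}$, your simpler $\phi$-free formulation is sufficient here.
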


\begin{proof}
By \eqref{constr1} and \eqref{form2},
\begin{equation}
\label{form1}
\begin{aligned}
\delta_r(\epsilon) &=  \sup_{\substack{\Delta\colon\, [0,1]^2\to [0,\infty) \\ 1-\eps\Delta\in \mathcal W \\
\int_{[0,1]^2} \Delta = 1+ \mathrm{O}(\eps^{1/2})} }  
\big[I_r(1) - I_r(1-  \epsilon \Delta)\big] \\
&=  \sup_{\substack{\Delta\colon\, [0,1]^2\to [0,\infty) \\ 1-\eps\Delta\in \mathcal W \\
\int_{[0,1]^2} \Delta = 1+ \mathrm{O}(\eps^{1/2})} }  
\int_{[0,1]^2} \left[\eps \Delta \log\left(\frac{1-\eps \Delta}{\eps \Delta} \frac{1-r}{r}\right)
- \log (1-\eps\Delta)\right].
\end{aligned}
\end{equation}
The integral in \eqref{form1} equals 
\begin{equation}
\int_{[0, 1]^2} \left[\eps \Delta \log\left(\frac{1-r}{\eps\Delta r}\right) - (1-\eps\Delta) \log (1-\eps\Delta)\right]
= \int_{[0,1]^2} \eps \Delta \log\left(\frac{1-r}{\eps\Delta r}\right)
+\eps \int_{[0,1]^2} \Delta + \mathrm{O}(\eps^2).
\end{equation}
Hence
\begin{equation}
\label{deltasup}
\delta_r(\eps) = [1+\mathrm{O}(\eps^{1/2})]\,\left\{\eps +\sup_{\substack{\Delta\colon\, [0,1]^2\to [0,\infty) \\
\int_{[0,1]^2} \Delta= 1} } \int_{[0,1]^2} \epsilon \Delta 
\log\left(\frac{1-r}{\epsilon \Delta r}\right)\right\} + \mathrm{O}(\eps^2),
\end{equation}
where the prefactor arises after we scale $\Delta$ by $1+ \mathrm{O}(\eps^{1/2})$ in order to force $\int_{[0,1]^2} \Delta= 1$. Note that the constraint under the supremum no longer depends on $\eps$.

We can solve the optimisation problem by applying the method of Lagrange multipliers. To that end we define the Lagrangian 
\begin{equation}
\mathcal{L}_{A_r}(\Delta) = \int_{[0,1]^2} \epsilon \Delta \log\left(\frac{1-r}{\epsilon \Delta r}\right)
+ A_r \int_{[0,1]^2} \Delta,
\end{equation}
where $A_r$ is a Langrange multiplier. Since $\int_{[0,1]^2} \log \frac{1-r}{r} = \int_{[0,1]^2} \log \frac{1-r^{\phi}}{r^{\phi}}$ for any Lebesgue measure-preserving bijective map $\phi \in \mathcal{M}$, we get that the minimizer (in the space of functions from $[0,1]^2\to\R$) is of the form
\begin{equation}
\Delta^{\phi} = \e^{-\frac{\epsilon-A_r} \epsilon}\,\frac{1}{\epsilon}\,\frac{1-r^{\phi}}{r^{\phi}}, 
\quad \phi \in \mathcal{M}.
\end{equation}
We pick $A_r$ such that the constraint $\int_{[0,1]^2} \Delta= 1$ is satisfied. This gives
\begin{equation}
\label{minid}
\Delta^{\phi} = \frac{1}{N^1_r}\,\frac{1-r^{\phi}}{r^{\phi}}, \quad \phi \in \mathcal{M},
\end{equation}
with $N^1_r = \int_{[0,1]^2} \frac{1-r}{r}$. Hence the supremum in \eqref{deltasup} becomes 
\begin{equation}
\label{deltasupalt}
\sup_{\phi \in \mathcal{M}} \int_{[0,1]^2} \epsilon \Delta^\phi \log\left(\frac{1-r}{\epsilon \Delta^\phi r}\right).
\end{equation}
We have
\begin{equation}
\int_{[0,1]^2} \epsilon \Delta^\phi \log\left(\frac{1-r}{\epsilon \Delta^\phi r}\right)
= \eps \log\left(\frac{N_r^1}{\eps}\right) - \eps \int_{[0,1]^2} \Delta^\phi \log\left(\frac{\Delta^\phi}{\Delta}\right),
\end{equation}
where we use that $\int_{[0,1]^2} \Delta^\phi = 1$. Since the function $u \mapsto s(u) = u \log u$ is strictly convex on $[0,\infty)$, Jensen's inequality gives 
\begin{equation}
\int_{[0,1]^2} \Delta^\phi \log\left(\frac{\Delta^\phi}{\Delta}\right)
= \int_{[0,1]^2} \Delta\,s\left(\frac{\Delta^\phi}{\Delta}\right) 
\geq s\left(\int_{[0,1]^2} \Delta\,\frac{\Delta^\phi}{\Delta}\right)
= s\left(\int_{[0,1]^2} \Delta^\phi\right) = s(1) = 0,
\end{equation}
where we use that $\int_{[0,1]^2} \Delta = 1$. Equality holds if and only if $\Delta=\Delta^\phi$ almost everywhere on $[0,1]^2$, which amounts to $\phi=\mathrm{Id}$. Hence the supremum in \eqref{deltasupalt} is uniquely attained at $\phi=\mathrm{Id}$ and equals
\begin{equation}
\int_{[0,1]^2} \eps\,\frac{1}{N_r^1} \frac{(1-r)}{r}\log\left(\frac{N_r^1}{\eps}\right) 
= \eps \log \left(\frac{N_r^1}{\eps}\right).
\end{equation}
Consequently, \eqref{deltasup} gives \eqref{K_1}.
\end{proof}

Lemma~\ref{lem:globalperturbation1} settles the claim in Theorem~\ref{thm:rfscaling}(ii). Since we have shown that a balanced perturbation is optimal, \eqref{minid} settles the claim in Theorem~\ref{thm:minscaling}(ii).


\subsection{Perturbation near the left end}
\label{pert0}

Take $\beta = \epsilon$ and consider a graphon of the form $h = \Delta_{\epsilon}$, where $\Delta_{\epsilon} \colon\, [0,1]^2 \to [0,\infty)$ represents a symmetric and bounded perturbation of the constant graphon $h \equiv 0$. We compare 
\begin{equation}
\psi_r(\epsilon) = \inf_{\substack{\Delta_{\epsilon} \colon\, [0,1]^2 \to [0,\infty) \\ \Delta_{\epsilon} \in \mathcal{W} \\\|\Delta_{\epsilon} \|= \epsilon}} 
I_r( \Delta_{\epsilon})
\end{equation}
with 
\begin{equation}
\psi_r(0) = I_r(0)
\end{equation}
by computing the difference
\begin{equation}
\label{around0}
\delta_r(\epsilon) =  \psi_r(\epsilon) - \psi_r(0)
\end{equation}
and studying its behaviour as $\epsilon \to 0$. 

We claim that analysing \eqref{around0} is equivalent to analysing
\begin{equation}
\label{around1}
\delta_{\hat{r}}(\epsilon) =  \phi_{\hat{r}}(1) - \phi_{\hat{r}}(1-\epsilon),
\end{equation}
where $\hat{r}$ is the \emph{reflection} of $r$ defined as
\begin{equation}
\label{sr}
\hat{r}(x,y) = 1-r(x,y), \qquad x,y \in [0,1].
\end{equation}
Indeed,
\begin{equation}
I_r(0) = \int_{[0,1]^2} \mathcal{R}(0 \mid r) = \int_{[0,1]^2} \log\left(\frac{1}{1-r}\right) 
= \int_{[0,1]^2} \mathcal{R}(1 \mid \hat{r}) = I_{\hat{r}}(1) 
\end{equation}
and
\begin{equation}
\begin{split}
I_r(\Delta_\epsilon) &= \int_{[0,1]^2} \mathcal{R}(\Delta_\epsilon \mid r) 
= \int_{[0,1]^2} \left[\Delta_\epsilon\log\left(\frac{\Delta_\epsilon}{r}\right) 
+ (1-\Delta_{\epsilon}) \log\left(\frac{1-\Delta_{\epsilon}}{1-r}\right)\right] \\
&= \int_{[0,1]^2} \mathcal{R}(1- \Delta_{\epsilon} \mid \hat{r}) 
= I_{\hat{r}}(1- \Delta_{\epsilon}).
\end{split}
\end{equation}
We can therefore use the results in Section \ref{sec:pert1}. From Lemma \ref{lem:globalperturbation1} we know that
\begin{equation}
\delta_{\hat{r}}(\epsilon) = [1+\mathrm{O}(\eps^{1/2})]\,\left\{\epsilon 
+ \epsilon \log\left(\frac{N^1_r}{\epsilon}\right)\right\}
+ \mathrm{O}(\eps^2), \qquad \epsilon \downarrow 0,
\end{equation}
and hence we obtain
\begin{equation}
\label{scalingaround0}
\delta_{r}(\epsilon) = [1+\mathrm{O}(\eps^{1/2})]\,\left\{\epsilon 
+ \epsilon \log\left(\frac{N^0_r}{\epsilon}\right)\right\}
+ \mathrm{O}(\eps^2), \qquad \epsilon \downarrow 0,
\end{equation}
Consequently, the optimal perturbation is given by the balanced perturbation $\Delta_{\epsilon} = \epsilon \Delta$ with
\begin{equation}
\label{minidalt}
\Delta = \frac{1}{N^0_r}\, \frac{r}{1-r},
\end{equation}
with $N^0_r = \int_{[0,1]^2} \frac{r}{1-r}$.

The scaling in \eqref{scalingaround0} settles the claim in Theorem~\ref{thm:rfscaling}(iii). Since we have shown that a balanced perturbation is optimal, \eqref{minidalt} settles the claim in Theorem~\ref{thm:minscaling}(iii).


\appendix

\section{Appendix}

\begin{lemma}{\bf [Finite-rank expansion]}
\label{l.finrank}
Consider a graphon $\bar{h} \in \mathcal{W}$ such that 
\begin{equation}
\bar{h}(x,y) = \sum_{i=1}^k\theta_i\bar\nu_i(x)\bar\nu_i(y), \qquad x,y\in[0,1],
\end{equation}
for some $k\in\N$, where $\theta_1>\theta_2\ge\ldots\ge\theta_k \ge 0$ and $\{\bar\nu_1,\bar\nu_2,\ldots,\bar\nu_k\}$ is an orthonormal set in $L^2[0,1]$. Then there exists an $\varepsilon>0$ such that, for any $h \in \mathcal{W}$ satisfying $\| T_{h-\bar{h}} \| < \min(\varepsilon, \|T_h\|)$, the operator norm $\|T_h\|$ solves the equation
\begin{equation}
\|T_h\|=\lambda_k \left(\sum_{n\in\N_0} \|T_h\|^{-n}{\mathcal F}_n(h,\bar h)\right),
\end{equation}
where $\lambda_k(M)$ is the largest eigenvalue of a $k\times k$ Hermitian matrix $M$, and ${\mathcal F}_n(h,\bar h)$ is a $k\times k$ matrix whose $(i,j)$-th entry is 
\begin{equation}
\sqrt{\theta_i\theta_j} \int_{[0,1]^2} \d x \, \d y \, \bar\nu_i(x) (h-\bar h)^n (x,y)\bar\nu_j(y)
\end{equation}
for $1\le i,j\le k$ and $n\in\N_0$.
\end{lemma}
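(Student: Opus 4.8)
The plan is to follow the proof of Lemma~\ref{lemmaexpansion}, replacing the scalar self-consistency equation by a $k$-dimensional one. Write $\mu=\|T_h\|$ and, by Proposition~\ref{prop:basics}, let $u\neq0$ satisfy $T_hu=\mu u$. Put $g=h-\bar h$. From $\bar h(x,y)=\sum_{i=1}^k\theta_i\bar\nu_i(x)\bar\nu_i(y)$ we get $T_{\bar h}u=\sum_{i=1}^k\theta_i\langle\bar\nu_i,u\rangle\bar\nu_i$, so that $T_hu=\mu u$ rewrites as $(\mu-T_g)u=T_{\bar h}u$. Since $\|T_g\|=\|T_{h-\bar h}\|<\|T_h\|=\mu$, the operator $\mu-T_g$ is invertible with $(\mu-T_g)^{-1}=\sum_{n\in\N_0}\mu^{-n-1}T_g^n$, whence
\[
u=\sum_{i=1}^k\theta_i\langle\bar\nu_i,u\rangle\,(\mu-T_g)^{-1}\bar\nu_i .
\]
Taking the inner product with each $\bar\nu_j$, writing $c_i=\langle\bar\nu_i,u\rangle$, $\hat c_i=\sqrt{\theta_i}\,c_i$, and using $T_g^n=T_{g^n}$ with $g^n=(h-\bar h)^n$ together with the self-adjointness of $T_g$, the system collapses to $\mu\,\hat c_j=\sum_{i=1}^k M_{ji}\hat c_i$ for $1\le j\le k$, where $M=\sum_{n\in\N_0}\mu^{-n}\mathcal F_n(h,\bar h)$ is precisely the Hermitian $k\times k$ matrix in the statement. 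The series converges entrywise since $|\langle\bar\nu_i,T_g^n\bar\nu_j\rangle|\le\|T_g\|^n$ and $\|T_g\|/\mu<1$. Thus $\mu$ is an eigenvalue of $M$ with eigenvector $\hat{\mathbf c}=(\hat c_1,\dots,\hat c_k)$.

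Next I would rule out $\hat{\mathbf c}=0$: if it held, then $T_{\bar h}u=\sum_i\sqrt{\theta_i}\,\hat c_i\bar\nu_i=0$, so $(\mu-T_g)u=0$, contradicting the invertibility of $\mu-T_g$ and $u\neq0$. Hence $\mu$ genuinely is an eigenvalue of $M$, and it remains only to identify it with the \emph{largest} eigenvalue $\lambda_k(M)$.

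This last identification is the main obstacle, and it is where the constant $\varepsilon$ (depending on $\bar h$ only) and the strict gap $\theta_1>\theta_2$ are used. The idea is perturbation around $g\equiv0$: there $\mathcal F_0=\mathrm{diag}(\theta_1,\dots,\theta_k)$ by orthonormality and $\mathcal F_n=0$ for $n\ge1$, so $M=\mathrm{diag}(\theta_1,\dots,\theta_k)$ and $\lambda_k(M)=\theta_1=\|T_{\bar h}\|$. For general $h$, on one side $|\mu-\theta_1|=\bigl|\,\|T_h\|-\|T_{\bar h}\|\,\bigr|\le\|T_g\|$; on the other, from $|\langle\bar\nu_i,T_g^n\bar\nu_j\rangle|\le\|T_g\|^n$, $\theta_i\le\theta_1$ and $\mu\ge\theta_1-\|T_g\|$ one gets $\|M-\mathrm{diag}(\theta_1,\dots,\theta_k)\|\le C_k\|T_g\|$ as soon as $\|T_g\|\le\theta_1/4$, with $C_k$ depending on $k$ and $\bar h$ only. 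By Weyl's inequality the eigenvalues of $M$ then lie within $C_k\|T_g\|$ of $\{\theta_1,\dots,\theta_k\}$. Picking $\varepsilon>0$ so small that $\varepsilon\le\theta_1/4$ and $(C_k+1)\varepsilon<\tfrac14(\theta_1-\theta_2)$, one finds for $\|T_g\|<\varepsilon$ that $\lambda_{k-1}(M)<\theta_1-\tfrac14(\theta_1-\theta_2)<\mu$, so $\mu$ — being an eigenvalue of $M$ but strictly above $\lambda_{k-1}(M)$ — must equal $\lambda_k(M)$. (For such $\varepsilon$ the hypothesis $\|T_{h-\bar h}\|<\varepsilon$ already forces $\|T_g\|<\theta_1/2<\|T_h\|$, so the invertibility used above is legitimate and the $\min$ in the hypothesis is a harmless safeguard.) The points needing care are the bookkeeping of constants — all must depend on $\bar h$ alone, not on $h$ — and the use of $\theta_1>\theta_2$ to keep $\lambda_k(M)$ isolated, which is exactly why this strict inequality is assumed.
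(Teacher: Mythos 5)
Your proof is correct and follows essentially the same route as the paper: reduce to the $k\times k$ self-consistency equation $Mv=\mu v$ via $(\mu-T_g)^{-1}$ and then use a perturbation argument for small $\|T_g\|$ to identify $\mu$ with the top eigenvalue of $M$. The only cosmetic differences are that the paper invokes the Gershgorin circle theorem for the last step while you use Weyl's inequality together with $|\mu-\theta_1|\le\|T_g\|$, and your explicit argument that $\hat{\mathbf c}\neq 0$ (via $T_{\bar h}u=0\Rightarrow u=0$) is a cleaner spelling-out of the paper's brief remark that the first entry of $v$ is nonzero for small $\varepsilon$.
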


\begin{proof}
Put $\mu=\|T_h\|$, and let $u$ be the eigenfunction of $h$ corresponding to $\mu$, i.e.,
\begin{equation}
T_hu=\mu u.
\end{equation}
Put $g=h-\bar h$ and rewrite the above as
\begin{equation}
(\mu-T_g)u=T_{\bar h} u.
\end{equation}
The assumption $\|T_{h-\bar h}\| < \|T_h\|$ implies that $\mu-T_g$ is invertible, which allows us to write
\begin{equation}
u=(\mu-T_g)^{-1}T_{\bar h} u=\sum_{j=1}^k\theta_j\langle\bar\nu_j,u\rangle (\mu-T_g)^{-1}\bar\nu_j.
\end{equation}
For fixed $1\le i\le k$, it follows that
\begin{equation}
\langle\bar\nu_i,u\rangle = \sum_{j=1}^k\theta_j\langle\bar\nu_j,u\rangle 
\langle\bar\nu_i,(\mu-T_g)^{-1}\bar\nu_j\rangle.
\end{equation}
Multiplying both sides by $\mu\sqrt\theta_i$,  we get
\begin{equation}
\label{l.finrank.eq1}
Mv = \mu v,
\end{equation}
where $M=(M_{ij})_{1 \leq i,j \leq k}$ is the $k\times k$ real symmetric matrix with elements
\begin{equation}
M_{ij} =\sqrt{\theta_i\theta_j}\left\langle \bar\nu_i,\left(1-\frac{T_g}{\mu}\right)^{-1}
\bar\nu_j\right\rangle, \qquad 1\le i,j\le k,
\end{equation}
and
\begin{equation}
v=\left[\sqrt\theta_1 \langle\bar\nu_1,u\rangle,\ldots,\sqrt\theta_k \langle\bar\nu_k,u\rangle\right]^\prime.
\end{equation}
The first entry of $v$ is non-zero for $\varepsilon$ small with $\|T_g\|<\varepsilon$. Thus, \eqref{l.finrank.eq1} means that $\mu$ is an eigenvalue of $M$. By studying the diagonal entries of $M$, we can shown with the help of the Gershgorin circle theorem that, for small $\|T_g\|$,
\begin{equation}
\mu=\lambda_k(M).
\end{equation}
With the help of the observation
\begin{equation}
M_{ij}=\sqrt{\theta_i\theta_j}\sum_{n\in\N_0}
\mu^{-n}\langle\bar\nu_i,g^n\bar\nu_j\rangle,\qquad 1\le i,j\le k,
\end{equation}
i.e.,
\begin{equation}
M = \sum_{n\in\N_0} \mu^{-n}{\mathcal F}_n(h,\bar h),
\end{equation}
this completes the proof.
\end{proof}


\section*{Acknowledgment}
AC and RSH were supported through MATRICS grant of SERB, and FdH and MS through NWO Gravitation Grant NETWORKS 024.002.003. The authors are grateful to ISI and NETWORKS for financial support during various exchange visits to Kolkata and Leiden.


\begin{thebibliography}{31}
\providecommand{\natexlab}[1]{#1}
\providecommand{\url}[1]{\texttt{#1}}
\expandafter\ifx\csname urlstyle\endcsname\relax
  \providecommand{\doi}[1]{doi: #1}\else
  \providecommand{\doi}{doi: \begingroup \urlstyle{rm}\Url}\fi

\bibitem[Augeri(2016)]{augeri2016}
F.~Augeri.
\newblock Large deviations principle for the largest eigenvalue of {W}igner
  matrices without {G}aussian tails.
\newblock \emph{Electron. J. Probab.}, 21:\penalty0 Paper No. 32, 49, 2016.
\newblock \doi{10.1214/16-EJP4146}.
\newblock URL \url{https://doi.org/10.1214/16-EJP4146}.

\bibitem[Augeri et~al.(2019)Augeri, Guionnet, and
  Husson]{augeriguionnethusson2019}
F.~Augeri, A.~Guionnet, and J.~Husson.
\newblock Large deviations for the largest eigenvalue of sub-{G}aussian
  matrices.
\newblock \emph{arXiv preprint arXiv:1911.10591}, 2019.

\bibitem[Bauer and Golinelli(2001)]{bauer2001random}
M.~Bauer and O.~Golinelli.
\newblock Random incidence matrices: moments of the spectral density.
\newblock \emph{J. Statist. Phys.}, 103\penalty0 (1-2):\penalty0 301--337,
  2001.
\newblock ISSN 0022-4715.
\newblock \doi{10.1023/A:1004879905284}.
\newblock URL \url{https://doi.org/10.1023/A:1004879905284}.

\bibitem[Ben~Arous and Guionnet(1997)]{arous:guionnet}
G.~Ben~Arous and A.~Guionnet.
\newblock Large deviations for {W}igner's law and {V}oiculescu's
  non-commutative entropy.
\newblock \emph{Probab. Theory Related Fields}, 108\penalty0 (4):\penalty0
  517--542, 1997.
\newblock ISSN 0178-8051.
\newblock \doi{10.1007/s004400050119}.
\newblock URL \url{https://doi.org/10.1007/s004400050119}.

\bibitem[Ben~Arous et~al.(2001)Ben~Arous, Dembo, and
  Guionnet]{arous:guionnet:dembo}
G.~Ben~Arous, A.~Dembo, and A.~Guionnet.
\newblock Aging of spherical spin glasses.
\newblock \emph{Probab. Theory Related Fields}, 120\penalty0 (1):\penalty0
  1--67, 2001.
\newblock ISSN 0178-8051.
\newblock \doi{10.1007/PL00008774}.
\newblock URL \url{https://doi.org/10.1007/PL00008774}.

\bibitem[Benaych-Georges et~al.(2019)Benaych-Georges, Bordenave, and
  Knowles]{benaych2017largest}
F.~Benaych-Georges, C.~Bordenave, and A.~Knowles.
\newblock Largest eigenvalues of sparse inhomogeneous {E}rd{\H{o}}s-{R}{\'e}nyi
  random graphs.
\newblock \emph{Ann. Probab.}, 47\penalty0 (3):\penalty0 1653--1676, 2019.
\newblock ISSN 0091-1798.
\newblock \doi{10.1214/18-AOP1293}.
\newblock URL \url{https://doi.org/10.1214/18-AOP1293}.

\bibitem[Bhamidi et~al.(2012)Bhamidi, Evans, and Sen]{bhamidi2012spectra}
S.~Bhamidi, S.~N. Evans, and A.~Sen.
\newblock Spectra of large random trees.
\newblock \emph{J. Theoret. Probab.}, 25\penalty0 (3):\penalty0 613--654, 2012.
\newblock ISSN 0894-9840.
\newblock \doi{10.1007/s10959-011-0360-9}.
\newblock URL \url{https://doi.org/10.1007/s10959-011-0360-9}.

\bibitem[Bordenave and Caputo(2014)]{bordenavecaputo2014}
C.~Bordenave and P.~Caputo.
\newblock A large deviation principle for {W}igner matrices without {G}aussian
  tails.
\newblock \emph{Ann. Probab.}, 42\penalty0 (6):\penalty0 2454--2496, 2014.
\newblock ISSN 0091-1798.
\newblock \doi{10.1214/13-AOP866}.
\newblock URL \url{https://doi.org/10.1214/13-AOP866}.

\bibitem[Bordenave and Lelarge(2010)]{bordenave2010resolvent}
C.~Bordenave and M.~Lelarge.
\newblock Resolvent of large random graphs.
\newblock \emph{Random Structures Algorithms}, 37\penalty0 (3):\penalty0
  332--352, 2010.
\newblock ISSN 1042-9832.
\newblock \doi{10.1002/rsa.20313}.
\newblock URL \url{https://doi.org/10.1002/rsa.20313}.

\bibitem[Chakrabarty et~al.()Chakrabarty, Hazra, den Hollander, and
  Sfragara]{chakrabarty2019}
A.~Chakrabarty, R.~S. Hazra, F.~den Hollander, and M.~Sfragara.
\newblock Spectra of adjacency and {L}aplacian matrices of inhomogeneous
  {E}rd{\H{o}}s-{R}{\'e}nyi random graphs.
\newblock \emph{Random Matrices: Theory and Applications (to appear)}, page
  2150009.
\newblock URL \url{https://doi.org/10.1142/S201032632150009X}.

\bibitem[Chakrabarty et~al.(2019)Chakrabarty, Chakraborty, and
  Hazra]{chakrabarty2019ext}
A.~Chakrabarty, S.~Chakraborty, and R.~S. Hazra.
\newblock Eigenvalues outside the bulk of inhomogeneous
  {E}rd{\H{o}}s-{R}{\'e}nyi random graphs.
\newblock \emph{arXiv preprint arXiv:1911.08244}, 2019.

\bibitem[Chatterjee(2017)]{chatterjeeln2017}
S.~Chatterjee.
\newblock \emph{Large {D}eviations for {R}andom {G}raphs}, volume 2197 of
  \emph{Lecture Notes in Mathematics}.
\newblock Springer, Cham, 2017.
\newblock ISBN 978-3-319-65815-5; 978-3-319-65816-2.
\newblock \doi{10.1007/978-3-319-65816-2}.
\newblock URL \url{https://doi.org/10.1007/978-3-319-65816-2}.
\newblock Lecture notes from the 45th Probability Summer School held in
  Saint-Flour, June 2015, \'{E}cole d'\'{E}t\'{e} de Probabilit\'{e}s de
  Saint-Flour. [Saint-Flour Probability Summer School].

\bibitem[Chatterjee and Varadhan(2011)]{chatterjee2011}
S.~Chatterjee and S.~R.~S. Varadhan.
\newblock The large deviation principle for the {E}rd{\H{o}}s-{R}{\'e}nyi
  random graph.
\newblock \emph{European J. Combin.}, 32\penalty0 (7):\penalty0 1000--1017,
  2011.
\newblock ISSN 0195-6698.
\newblock \doi{10.1016/j.ejc.2011.03.014}.
\newblock URL \url{https://doi.org/10.1016/j.ejc.2011.03.014}.

\bibitem[Deimling(1985)]{deimling}
K.~Deimling.
\newblock \emph{Nonlinear {F}unctional {A}nalysis}.
\newblock Springer-Verlag, Berlin, 1985.
\newblock ISBN 3-540-13928-1.
\newblock \doi{10.1007/978-3-662-00547-7}.
\newblock URL \url{https://doi.org/10.1007/978-3-662-00547-7}.

\bibitem[Dembo and Lubetzky(2016)]{dembo:eyal}
A.~Dembo and E.~Lubetzky.
\newblock Empirical spectral distributions of sparse random graphs.
\newblock \emph{arXiv preprint arXiv:1610.05186}, 2016.

\bibitem[den Hollander(2000)]{dH2000}
F.~den Hollander.
\newblock \emph{Large {D}eviations}, volume~14 of \emph{Fields Institute
  Monographs}.
\newblock American Mathematical Society, Providence, RI, 2000.
\newblock ISBN 0-8218-1989-5.

\bibitem[Dhara and Sen(2019)]{souvik2020}
S.~Dhara and S.~Sen.
\newblock Large deviation for uniform graphs with given degrees.
\newblock \emph{arXiv preprint arXiv:1904.07666}, 2019.

\bibitem[Ding and Jiang(2010)]{ding2010spectral}
X.~Ding and T.~Jiang.
\newblock Spectral distributions of adjacency and {L}aplacian matrices of
  random graphs.
\newblock \emph{Ann. Appl. Probab.}, 20\penalty0 (6):\penalty0 2086--2117,
  2010.
\newblock ISSN 1050-5164.
\newblock \doi{10.1214/10-AAP677}.
\newblock URL \url{https://doi.org/10.1214/10-AAP677}.

\bibitem[Disertori et~al.(2014)Disertori, Merkl, and Rolles]{disertori}
M.~Disertori, F.~Merkl, and S.~W.~W. Rolles.
\newblock Localization for a nonlinear sigma model in a strip related to vertex
  reinforced jump processes.
\newblock \emph{Comm. Math. Phys.}, 332\penalty0 (2):\penalty0 783--825, 2014.
\newblock ISSN 0010-3616.
\newblock \doi{10.1007/s00220-014-2102-1}.
\newblock URL \url{https://doi.org/10.1007/s00220-014-2102-1}.

\bibitem[Dumitriu and Pal(2012)]{dumitriu2012sparse}
I.~Dumitriu and S.~Pal.
\newblock Sparse regular random graphs: spectral density and eigenvectors.
\newblock \emph{Ann. Probab.}, 40\penalty0 (5):\penalty0 2197--2235, 2012.
\newblock ISSN 0091-1798.
\newblock \doi{10.1214/11-AOP673}.
\newblock URL \url{https://doi.org/10.1214/11-AOP673}.

\bibitem[Farkas et~al.(2001)Farkas, Der{\'e}nyi, Barab{\'a}si, and
  Vicsek]{farkas2001spectra}
I.~J. Farkas, I.~Der{\'e}nyi, A.-L. Barab{\'a}si, and T.~Vicsek.
\newblock Spectra of ?real-world? graphs: Beyond the semicircle law.
\newblock \emph{Physical Review E}, 64\penalty0 (2):\penalty0 026704, 2001.

\bibitem[Jiang(2012{\natexlab{a}})]{jiang2012empirical}
T.~Jiang.
\newblock Empirical distributions of {L}aplacian matrices of large dilute
  random graphs.
\newblock \emph{Random Matrices Theory Appl.}, 1\penalty0 (3):\penalty0
  1250004, 20, 2012{\natexlab{a}}.
\newblock ISSN 2010-3263.
\newblock \doi{10.1142/S2010326312500049}.
\newblock URL \url{https://doi.org/10.1142/S2010326312500049}.

\bibitem[Jiang(2012{\natexlab{b}})]{jiang2012low}
T.~Jiang.
\newblock Low eigenvalues of {L}aplacian matrices of large random graphs.
\newblock \emph{Probab. Theory Related Fields}, 153\penalty0 (3-4):\penalty0
  671--690, 2012{\natexlab{b}}.
\newblock ISSN 0178-8051.
\newblock \doi{10.1007/s00440-011-0357-4}.
\newblock URL \url{https://doi.org/10.1007/s00440-011-0357-4}.

\bibitem[Khorunzhy et~al.(2004)Khorunzhy, Shcherbina, and
  Vengerovsky]{khorunzhy2004eigenvalue}
O.~Khorunzhy, M.~Shcherbina, and V.~Vengerovsky.
\newblock Eigenvalue distribution of large weighted random graphs.
\newblock \emph{J. Math. Phys.}, 45\penalty0 (4):\penalty0 1648--1672, 2004.
\newblock ISSN 0022-2488.
\newblock \doi{10.1063/1.1667610}.
\newblock URL \url{https://doi.org/10.1063/1.1667610}.

\bibitem[Lee and Schnelli(2018)]{lee:schnelli}
J.~O. Lee and K.~Schnelli.
\newblock Local law and {T}racy-{W}idom limit for sparse random matrices.
\newblock \emph{Probab. Theory Related Fields}, 171\penalty0 (1-2):\penalty0
  543--616, 2018.
\newblock ISSN 0178-8051.
\newblock \doi{10.1007/s00440-017-0787-8}.

\bibitem[Lov\'{a}sz(2012)]{lovasz2012}
L.~Lov\'{a}sz.
\newblock \emph{Large {N}etworks and {G}raph {L}imits}, volume~60 of
  \emph{American Mathematical Society Colloquium Publications}.
\newblock American Mathematical Society, Providence, RI, 2012.
\newblock ISBN 978-0-8218-9085-1.
\newblock \doi{10.1090/coll/060}.
\newblock URL \url{https://doi.org/10.1090/coll/060}.

\bibitem[Lubetzky and Zhao(2015)]{lubetzkyzhao2015}
E.~Lubetzky and Y.~Zhao.
\newblock On replica symmetry of large deviations in random graphs.
\newblock \emph{Random Structures Algorithms}, 47\penalty0 (1):\penalty0
  109--146, 2015.
\newblock ISSN 1042-9832.
\newblock \doi{10.1002/rsa.20536}.
\newblock URL \url{https://doi.org/10.1002/rsa.20536}.

\bibitem[Markering(May 2020)]{Markering2020}
M.~J.~R. Markering.
\newblock The large deviation principle for inhomogeneous
  {E}rd{\H{o}}s-{R}{\'e}nyi random graphs.
\newblock \emph{Bachelor thesis Leiden University}, May 2020.

\bibitem[Sauvigny(2012)]{sauvigny}
F.~Sauvigny.
\newblock \emph{Partial {D}ifferential {E}quations. 2}.
\newblock Universitext. Springer-Verlag London, Ltd., London, 2012.
\newblock ISBN 978-1-4471-2983-7.
\newblock \doi{10.1007/978-1-4471-2984-4}.
\newblock URL \url{https://doi.org/10.1007/978-1-4471-2984-4}.
\newblock Functional analytic methods, With consideration of lectures by E.
  Heinz, Second revised and enlarged edition of the 2006 translation.

\bibitem[Tran et~al.(2013)Tran, Vu, and Wang]{tran2013sparse}
L.~V. Tran, V.~H. Vu, and K.~Wang.
\newblock Sparse random graphs: eigenvalues and eigenvectors.
\newblock \emph{Random Structures Algorithms}, 42\penalty0 (1):\penalty0
  110--134, 2013.
\newblock ISSN 1042-9832.
\newblock \doi{10.1002/rsa.20406}.
\newblock URL \url{https://doi.org/10.1002/rsa.20406}.

\bibitem[Zhu(2020)]{zhu:2018}
Y.~Zhu.
\newblock A graphon approach to limiting spectral distributions of
  {W}igner-type matrices.
\newblock \emph{Random Structures Algorithms}, 56\penalty0 (1):\penalty0
  251--279, 2020.
\newblock ISSN 1042-9832.
\newblock \doi{10.1002/rsa.20894}.
\newblock URL \url{https://doi.org/10.1002/rsa.20894}.

\end{thebibliography}


\end{document}